\newtheorem{theorem}{Theorem}[section]
\newtheorem{lemma}[theorem]{Lemma}
\newtheorem{prop}[theorem]{Proposition}
\newtheorem{cor}[theorem]{Corollary}
\newtheorem{ex}[theorem]{Example}
\newtheorem{defn}[theorem]{Definition}
\newtheorem{rem}[theorem]{Remark}
\newtheorem{question}[theorem]{Question}
\newcommand{\z}{{\mathbb Z}}
\newcommand{\q}{{\mathbb Q}}
\newcommand{\zb}{\z[\langle b \rangle]}
\newcommand{\qb}{\q[\langle b \rangle]}
\newcommand{\gp}{\textrm{gp}}
\newcommand{\wre}{\textrm{ wr }}
\DeclareMathOperator{\cay}{Cay}
\DeclareMathOperator{\supp}{supp}
\DeclareMathOperator{\spann}{route}
\DeclareMathOperator{\trace}{reach}
\begin{document}

\title{Subgroup Distortion in Wreath Products of Cyclic Groups}
\author{Tara C. Davis, Alexander Yu. Olshanskii}
\maketitle

\begin{abstract}
We study the effects of subgroup distortion in the wreath products $A \wre \z$, where $A$ is finitely generated abelian. We show that every finitely generated subgroup of $A \wre \z$ has distortion function equivalent to some polynomial. Moreover, for $A$ infinite, and for any polynomial $l^k$, there is a $2$-generated subgroup of $A \wre \z$ having distortion function equivalent to the given polynomial. Also a formula for the length of elements in arbitrary wreath product $H \wre G$ easily shows that the group $\z_2 \wre \z^2$ has distorted subgroups, while the lamplighter group $\z_2\wre\z$ has no distorted (finitely generated) subgroups.
 
\end{abstract}

\let\thefootnote\relax\footnotetext{{\bf Keywords:} Subgroup distortion; wreath product; word metric; free metabelian group.}
\let\thefootnote\relax\footnotetext{{\bf Mathematics Subject Classification 2000:} 20F69, 20E22, 20E10, 20F05.}

%\large

\section{Introduction}
The notion of subgroup distortion was first formulated by Gromov in \cite{gromov}. For a group $G$ with finite generating set $T$ and a subgroup $H$ of $G$ finitely generated by $S$, the distortion function of $H$ in $G$ is $$\Delta_{H}^{G}(l) = \max \{ |w|_S : w \in H, |w|_T \leq l \},$$ where $|w|_S$ represents the word length with respect to the given generating set $S$, and similarly for $|w|_T$. This function measures the difference in the word metrics on $G$ and on $H$. 

As usual, we only study distortion up to a natural equivalence relation. For non-decreasing functions $f$ and $g$ on $\mathbb{N}$, we say that $f \preceq g$ if there exists an integer $C>0$ such that $f(l) \leq Cg(Cl)$ for all $l \geq 0$. We say two functions are equivalent, written $f \approx g$, if $f \preceq g$ and $g \preceq f$. 
When considered up to this equivalence, the distortion function becomes independent of the choice of finite generating sets. If the subgroup $H$ is infinite then the growth of the 
distortion function is at least linear, and therefore one does not extend the
equivalence classes using the equivalence defined by the inequality $f(l) \leq Cg(Cl) +Cn.$ A subgroup $H$ of $G$ is said to be undistorted if $\Delta^{G}_{H}(l) \approx l$. If a subgroup $H$ is not undistorted, then it is said to be distorted, and its distortion refers to the equivalence class of $\Delta_H^G(l)$. 
%If $H$ is infinite, then the distortion function is at least linear, so one may remove the %term ``$+Cl$'' from the definition of equivalence.

\begin{rem}\label{poa}
Suppose there exists a subsequence of $\mathbb{N}$ given by $\{l_i\}_{i \in \mathbb{N}}$ where $l_i < l_{i+1}$ for $i \geq 1$. If there exists $c>0$ such that $\frac{l_{i+1}}{l_i} \leq c$, for all $i \geq 1$, and $f(l_i) \geq g(l_i)$, then $f \succeq g$. 
\end{rem}

Here we study the effects of distortion in various subgroups of the wreath products $\z^k \wre \z$, for $0<k \in \z$, and more generally, in $A \wre \z$ where $A$ is finitely generated abelian. 

Note that wreath products $A \wre B$ where $A$ is abelian play a very important role in group theory for many reasons. Given any semidirect product $G=C\lambda D$ with abelian normal subgroup $C,$ then any two homomorphisms from $A \to C$ and $B \to D$ (uniquely) extend to a homomorphism from $A \wre B$ to $G.$ Also, if $B$ is presented as a factor-group $F/N$ of a $k$-generated free group $F,$ then the maximal extension $F/[N,N]$ of $B$ with abelian kernel is canonically embedded in $\z^k \wre B$ (see \cite{magnus}.) Wreath products of abelian groups give an inexhaustible source of examples and counter-examples in group theory.

For instance, the group $\z\wre \z$ is the simplest example of a finitely generated (though not finitely presented) group containing a free abelian group of infinite rank. In \cite{sapir} the group $\z \wre \z$ is studied in connection with diagram groups and in particular with Thompson's group. In the same paper, it is shown that for $H_d=( \cdots (\z \wre \z) \wre \z) \cdots \wre \z)$, where the group $\z$ appears $d$ times, there is a subgroup $K \leq H_d \times H_d$ having distortion function $\Delta_K^{H_d \times H_d}(l) \succeq l^d$. In contrast to the study of these iterated wreath products, here we obtain polynomial distortion of arbitrary degree in the group $\z \wre \z$ itself. In \cite{cleary} the distortion of $\z \wre \z$ in Baumslag's metabelian group is shown to be at least exponential, and an undistorted embedding of $\z \wre \z$ in Thompson's group is constructed. 

In this note, rather than embedding the group $\z \wre \z$ into larger groups, or studying multiple wreath products, we will study distorted and undistorted subgroups in the wreath products $A \wre \z$ with $A$ finitely generated abelian.  The main results are as follows. 

\begin{theorem}\label{x}
Let $A$ be a finitely generated abelian group.
\begin{enumerate}
\item For any finitely generated infinite subgroup $H \leq A \wre \z$ there exists $m \in \mathbb{N}$ such that the distortion of $H$ in $A \wre \z$ is $$\Delta_H^{A \wre \z}(l) \approx l^{m}.$$
\item If $A$ is finite, then $m=1$; that is, all subgroups are undistorted.
\item If $A$ is infinite, then for every $m \in \mathbb{N}$, there is a $2$-generated subnormal subgroup $H$ of $A \wre \z$ having distortion function $$\Delta_H^{A \wre \z}(l) \approx l^{m}.$$
\end{enumerate}
\end{theorem}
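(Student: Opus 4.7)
The plan is to treat the three assertions separately, but with a common tool: the standard length estimate for a wreath product, namely that if $g \in A \wre \z$ corresponds to a finitely-supported function $f\colon \z \to A$ together with a shift $n \in \z$, then
\[
|g|_{A \wre \z} \asymp |n| + \tau(\supp f \cup \{0, n\}) + \sum_{j \in \supp f} |f(j)|_A,
\]
where $\tau$ denotes the length of a shortest path in $\z$ starting at $0$, visiting all the indicated points, and ending at $n$. For Part~2, the assumption $|A|<\infty$ bounds the coefficient sum by $|\supp f|\cdot\max_{a \in A}|a|_A$, collapsing word length to the travel term, from which undistortion of any f.g.\ subgroup follows easily. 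For Part~1, I would split on whether the projection $\pi\colon H \to \z$ is trivial (then $H$ sits in $B=\bigoplus_\z A$ with bounded support, is f.g.\ abelian-type, and is undistorted) or $\pi(H)=d\z$ is infinite (fix $t_0 \in H$ lifting $d$; the intersection $H \cap B$ is then a f.g.\ $\z[t_0^{\pm 1}]$-module by Noetherianity, and the polynomial $m$ arises as a degree-bound in writing an arbitrary element as a $\z[t_0^{\pm 1}]$-combination of the chosen module generators).

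For Part~3, since $\z \leq A$ it suffices to build the distorted subgroup inside $G := \z \wre \z = \langle a,t\rangle$. Identify the base $B$ with $\z[t^{\pm 1}]$ via $a \leftrightarrow 1$ and $t^j a t^{-j} \leftrightarrow t^j$. For each $m \geq 2$, set
\[
H_m := \langle t,\ v_m\rangle, \qquad v_m := (1-t)^{m-1} \in B,
\]
which is 2-generated by fiat. Subnormality will come from the chain $H_m \triangleleft H_{m-1} \triangleleft \cdots \triangleleft H_1 = G$, verified inductively: the generator $t$ of $H_{j-1}$ shifts $H_j \cap B = (1-t)^{j-1}\z[t^{\pm 1}]$ to itself, and a direct computation in $B \rtimes \langle t\rangle$ gives $v_{j-1}\, t\, v_{j-1}^{-1} = v_j\cdot t \in H_j$. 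Every element of $H_m$ then has the form $h = ((1-t)^{m-1} Q,\ k)$ for some $Q \in \z[t^{\pm 1}]$ and $k \in \z$.

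For the upper bound $\Delta_{H_m}^G(l) \preceq l^m$, the constraints $\|(1-t)^{m-1}Q\|_1 \leq l$ and $\deg Q \leq l$ combine via iterated partial-summation (each division by $(1-t)$ multiplies the $L^1$-norm by at most $\deg + 1$) to give $\|Q\|_1 \leq l^m$; the direct word $\prod_j t^j v_m^{Q_j} t^{-j}\cdot t^k$ then has length $O(l^m)$. For the matching lower bound, I would test on
\[
h_k^* := \bigl((1-t^k)^{m-1}(1+t+\cdots+t^{k-1}),\ 0\bigr) = \bigl((1-t)^{m-1}[k]_t^m,\ 0\bigr)\in H_m,
\]
where $[k]_t := 1+t+\cdots+t^{k-1}$. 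The polynomial $[k]_t^m$ has nonnegative coefficients summing to $k^m$, so $\|Q\|_1 = k^m$; meanwhile $(1-t^k)^{m-1}[k]_t$ has support in $[0,mk-1]$ with coefficients bounded by $2^{m-1}$, so $|h_k^*|_G = O(k)$. An arbitrary $H_m$-word $w = t^{a_0} v_m^{b_1} t^{a_1} v_m^{b_2}\cdots$ representing $h_k^*$ forces $Q = \sum_i b_i t^{\sigma_i}$ with $\sigma_i := a_0 + \cdots + a_{i-1}$, so by the triangle inequality $k^m = \|Q\|_1 \leq \sum_i |b_i| \leq |w|$. Hence $|h_k^*|_{H_m} \geq k^m$, and since $k \asymp l$ along this sequence Remark~\ref{poa} yields $\Delta_{H_m}^G \succeq l^m$.

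The main obstacle will be the lower-bound combinatorics: although $\|Q\|_1 \leq \sum|b_i|$ is ``just'' the coefficient-by-coefficient triangle inequality, making the letter-counting fully rigorous requires careful justification that the decomposition $P = (1-t)^{m-1}Q$ in $\z[t^{\pm 1}]$ is genuinely unique (so that $v_m$-letters can only cancel through addition in the abelian base) and that reshuffling $t$'s past $v_m$'s in the word does not alter the coefficient accounting. The remaining ingredients---polynomial-division bounds for the upper estimate, verification of the subnormality chain, and application of Remark~\ref{poa}---are routine bookkeeping.
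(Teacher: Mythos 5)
Your Part 3 is essentially correct and is in fact the paper's own construction: the subgroup $\langle b,(1-x)^{m-1}a\rangle$ of $\z\wre\z$, test elements $(1-x)^{m-1}\bigl(1+x+\cdots+x^{k-1}\bigr)^{m}$, and the observation that $\z\wre\z$ sits in $A\wre\z$ as a retract (you need that retract/undistortedness remark, as in Lemma \ref{wkf}, to transfer the equivalence to $A\wre\z$, not merely $\z\le A$). Your letter-counting lower bound is sound because $\z[x^{\pm 1}]$ is a domain (so the factorization $(1-x)^{m-1}Q$ is unique) and because $H_m\cong\z\wre\z$, so the wreath-product length formula applies intrinsically to $H_m$; your iterated partial-summation upper bound is a legitimately more elementary substitute for the paper's Jordan-block machinery, and it works here precisely because the only relevant root of $(1-x)^{m-1}$ is $1$. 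The explicit subnormal chain $H_m\lhd\cdots\lhd H_1=G$ matches Corollary \ref{xx}.

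The genuine gaps are Parts 1 and 2, which is where nearly all of the paper's work lies. For Part 1, the sentence ``the polynomial $m$ arises as a degree-bound in writing an arbitrary element as a $\z[t_0^{\pm 1}]$-combination of the chosen module generators'' is not an argument, and it misidentifies the source of the exponent: in the paper $m-1$ is the maximal multiplicity of a \emph{modulus-one complex root} of a polynomial attached to the subgroup, and reaching that point requires (i) reducing an arbitrary finitely generated subgroup to a ``subgroup with $b$'' via $L_t\cong A^t\wre\z$ and the automorphism killing the $W$-part of the active generator, (ii) reducing $A$ to $\z^k$ by induction on a composition series, (iii) passing, via Smith normal form over $\q[\langle b\rangle]$ and the commensurability lemmas, to a tame subgroup, (iv) splitting the tame distortion into one-variable exemplary pieces, and (v) computing the distortion of a polynomial exactly from both sides. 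Your sketch contains none of these steps; in particular you give no matching lower bound, so nothing in your outline explains why the distortion is $\approx l^m$ for an integer $m$ rather than something intermediate, and even the upper bound cannot be obtained by degree counting or partial summation alone: dividing by a general $h$ (say $1-2x$) inflates coefficients exponentially in the degree, and the correct polynomial bound needs the two-sided boundary argument of the Jordan-block lemmas (data bounded at both ends of the recursion), an idea absent from your proposal. Likewise Part 2 does not ``follow easily'' from the boundedness of the coefficient term: the difficulty is not the length in $G$ but producing a short word in the generators of $H$, i.e.\ a representation $h=\sum_i f_i w_i$ with the supports of the $f_i$ of size $O(l)$; the paper obtains this from the PID structure of $\z_p[\langle b\rangle]$ (showing $H$ has finite index in a retract) together with an induction over a composition series for general finite $A$, and your proposal offers no substitute for this module-theoretic step.
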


%Theorem \ref{x} will be proved in Section \ref{final}. 

The following will be explained in Subsection \ref{fs}.

\begin{cor}\label{ot}
For every $m \in \mathbb{N}$, there is a $2$-generated subgroup $H$ of the free $n$-generated metabelian group $S_{n,2}$ having distortion function $$\Delta_H^{S_{n,2}}(l) \succeq l^{m}.$$
\end{cor}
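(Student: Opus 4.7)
The plan is to deduce Corollary \ref{ot} from Theorem \ref{x}(3) by realising $\z \wre \z$ as a subgroup of $S_{n,2}$ (with $n \geq 2$) and transporting the distorted subgroups produced there. First, I would apply Theorem \ref{x}(3) with $A = \z$: for the given $m \in \mathbb{N}$, fix a $2$-generated subgroup $K \leq \z \wre \z$ with $\Delta_K^{\z \wre \z}(l) \approx l^m$.

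The heart of the argument is to embed $\z \wre \z$ into $S_{n,2}$. Let $x_1, \ldots, x_n$ be the free generators of $S_{n,2}$, set $a = x_1$ and $b = [x_1, x_2]$, and consider $L = \langle a, b \rangle$. Since $S_{n,2}$ is metabelian, $b$ lies in the abelian derived subgroup, so the conjugates $a^k b a^{-k}$ commute pairwise, and there is a natural surjection $\z \wre \z \twoheadrightarrow L$ sending the top generator to $a$ and the base generator to $b$. It is an isomorphism exactly when the conjugates $\{a^k b a^{-k}\}_{k \in \z}$ are $\z$-linearly independent in $[S_{n,2}, S_{n,2}]$. I would verify this via the Magnus embedding $S_{n,2} \hookrightarrow \z^n \wre \z^n$ recalled in the introduction: the image of $b$ in the base group $\bigoplus_{g \in \z^n} \z^n$ is non-zero (the embedding is injective and $b \neq 1$), and its translates under the $\langle x_1 \rangle$-action in this free $\z[\z^n]$-module are $\z$-linearly independent because $\z[x_1^{\pm 1}]$ is an integral domain. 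Hence $L \cong \z \wre \z$.

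Finally, set $H = \iota(K) \leq S_{n,2}$, where $\iota : \z \wre \z \cong L \hookrightarrow S_{n,2}$; then $H$ is still $2$-generated. Writing $a$ and $b$ as words of length at most $M$ in the $x_i$'s, every $w \in K$ satisfies $|w|_{S_{n,2}} \leq M |w|_{\z \wre \z}$, and consequently $\Delta_K^{\z \wre \z}(l) \leq \Delta_H^{S_{n,2}}(Ml)$. Combined with $\Delta_K^{\z \wre \z}(l) \approx l^m$ this yields $\Delta_H^{S_{n,2}}(l) \succeq l^m$, as required. The main point demanding genuine content is the identification $L \cong \z \wre \z$: without the Magnus embedding one only sees a metabelian quotient of $\z \wre \z$ and not the isomorphism itself. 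Once $L \cong \z \wre \z$ is established, the distortion comparison is routine bookkeeping about nested word metrics.
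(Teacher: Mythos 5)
Your proposal is correct and follows essentially the same route as the paper: take the $2$-generated subgroup of $\z \wre \z$ with distortion $l^m$ from Theorem \ref{x}(3), use that $\z \wre \z$ embeds in $S_{n,2}$ (the paper's Lemma \ref{y}, cited there as a well-known consequence of the Magnus embedding), and conclude by the monotonicity of distortion under enlarging the ambient group (Lemma \ref{wkf}). The only difference is that you spell out, via the Magnus embedding and the integrality of the group ring, a proof of the specific instance of Lemma \ref{y} that the paper simply quotes as known.
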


\begin{cor}\label{xx}
If we let the standard generating set for $\z \wre \z$ be $\{ a, b \}$, then the subgroup $H_m= \langle b, [ \cdots [a,b],b], \cdots, b] \rangle$, where the commutator is $(m-1)$-fold, is $m-1$ subnormal, isomorphic to the whole group $\z \wre \z$, with distortion $l^m$. In particular the normal subgroup $\langle b, [a,b] \rangle$ has quadratic distortion.
\end{cor}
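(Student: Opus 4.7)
The plan is to work inside the semidirect product description $\z \wre \z = \z[t, t^{\pm 1}] \rtimes \langle b\rangle$, where the base generator $a$ corresponds to the constant polynomial $1$ and conjugation by $b$ acts as multiplication by $t^{-1}$ (so that $b^{-1} x b = t x$ for $x$ in the base). A direct semidirect-product computation then yields the identity $[x, b] = (t-1)\, x$ for every $x$ in the base, and hence by induction the $k$-fold iterated commutator $c_k = [\cdots[a,b],b],\cdots,b]$ corresponds to the polynomial $(t-1)^k$. Consequently the base group of $H_m = \langle b, c_{m-1}\rangle$ is the principal ideal $(t-1)^{m-1}\z[t, t^{\pm 1}]$. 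Since $c_{m-1}$ has infinite order and its $b$-conjugates $\{t^j (t-1)^{m-1}\}_{j \in \z}$ are $\z$-linearly independent, the map sending the standard generators of $\z\wre\z$ to the pair $(b, c_{m-1})$ yields an isomorphism $\z\wre\z \cong H_m$.

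For the subnormality, the same setup produces the identity $c_{k-1}\, b\, c_{k-1}^{-1} = b \cdot c_k$, which shows that conjugation by each of the two generators of $H_k = \langle b, c_{k-1}\rangle$ preserves $H_{k+1} = \langle b, c_k\rangle$. Hence $H_{k+1} \triangleleft H_k$, and iterating gives the subnormal chain $\z\wre\z = H_1 \triangleright H_2 \triangleright \cdots \triangleright H_m$ of length $m-1$.

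For the distortion lower bound, I would choose the test element
\[
f \;=\; (t-1)^{m-1}\, h(t), \qquad h(t) \;=\; \sum_{k=0}^{N} \binom{k}{m-1} t^k.
\]
Because the $(m-1)$-st iterated finite difference of $\binom{k}{m-1}$ is identically $1$, the polynomial $f$ equals, modulo boundary corrections of total $\ell^1$-weight $O(m)$, the sum $t^{m-1} + t^m + \cdots + t^N$; in particular both $\|f\|_1$ and the diameter of the support of $f$ are $O(N)$, so the wreath-product length formula gives $|f|_{\{a,b\}} \leq C N$. On the other hand, in the lamplighter description of $H_m \cong \z\wre\z$ with lamp generator $c_{m-1}$, the same element $f$ has lamp height $\binom{k}{m-1}$ at each position $k = 0, 1, \ldots, N$, which forces
\[
|f|_{\{b, c_{m-1}\}} \;\geq\; \sum_{k=0}^{N} \binom{k}{m-1} \;=\; \binom{N+1}{m} \;\asymp\; N^m
\]
by the hockey-stick identity. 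Applying Remark \ref{poa} to the sequence $l_N = CN$ upgrades this to $\Delta_{H_m}^{\z\wre\z}(l) \succeq l^m$; the case $m = 2$ produces the final sentence of the corollary.

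For the matching upper bound, Theorem \ref{x}(1) already forces $\Delta_{H_m}^{\z\wre\z}(l) \approx l^k$ for some $k \in \mathbb{N}$, so it remains to verify $k \leq m$. For this, if $g = v \cdot b^j \in H_m$ has $|g|_{\{a,b\}} \leq l$, then $v = (t-1)^{m-1} h$ satisfies $\|v\|_1 + \operatorname{diam}(\operatorname{supp} v) \leq l$, and a single division $(t-1) u = v'$ is solved by cumulative partial sums, giving $|u_k| \leq \|v'\|_1$ on a support of diameter $\leq l$; each such division therefore multiplies the $\ell^1$-norm by at most $O(l)$. Iterating $m-1$ times yields $\|h\|_1 \leq O(l^m)$ and hence $|g|_{\{b, c_{m-1}\}} \leq O(l^m)$. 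The main obstacle is a careful bookkeeping of boundary terms in both the lower- and upper-bound calculations, so that the exponent $m$ emerges cleanly rather than being shifted by one.
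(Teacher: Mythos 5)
Your lower-bound argument has a genuine gap at the choice of test element. You take $f=(t-1)^{m-1}h(t)$ with $h(t)=\sum_{k=0}^{N}\binom{k}{m-1}t^{k}$ and claim that $f$ equals $t^{m-1}+\cdots+t^{N}$ up to boundary corrections of total $\ell^{1}$-weight $O(m)$, hence $|f|_{\{a,b\}}\leq CN$. The finite-difference identity is only valid for the bi-infinite sequence; the abrupt truncation at $k=N$ produces large residual coefficients in degrees $N+1,\dots,N+m-1$. Concretely, the coefficient of $t^{N+m-1}$ in $(t-1)^{m-1}h(t)$ is exactly $\binom{N}{m-1}\sim N^{m-1}$, so $S(f)=\Vert f\Vert_{1}\succeq N^{m-1}$ and the wreath-product length of your test element in $\z\wre\z$ is of order $N^{m-1}$, not $N$. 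With that corrected length your computation only gives $\Delta_{H_m}^{\z\wre\z}(l)\succeq l^{m/(m-1)}$, which is far short of $l^{m}$. The repair is essentially the paper's choice (Lemma \ref{hnc} with $c=1$): take the multiplier $v_l(t)^{m}$ with $v_l(t)=1+t+\cdots+t^{l-1}$, so that the exact identity $(t-1)v_l(t)=t^{l}-1$ gives $(t-1)^{m-1}v_l(t)^{m}=(t^{l}-1)^{m-1}v_l(t)$, whence $S\bigl((t-1)^{m-1}v_l^{m}\bigr)\leq 2^{m-1}l$ while $S(v_l^{m})\geq v_l(1)^{m}=l^{m}$; this is the clean version of the "staircase" you were aiming for, with no uncontrolled boundary terms.

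The remaining parts of your proposal are sound and partly diverge from the paper in an interesting way. The identification of the iterated commutator with $(t-1)^{m-1}$, the isomorphism $H_m\cong\z\wre\z$ (this is the paper's Lemma \ref{xxx}: $W$ is a free $\z[\langle b\rangle]$-module, so multiplication by $h$ is injective), and the subnormal chain $H_1\rhd H_2\rhd\cdots\rhd H_m$ all match the paper, which disposes of these points in one line each. For the upper bound the paper goes through Lemma \ref{hpu} and Theorem \ref{tgs}, i.e.\ the Jordan-block estimates of Lemmas \ref{ob}--\ref{cz}, because it must handle arbitrary $h$ with roots anywhere on the unit circle; your iterated division by $(t-1)$ via cumulative partial sums (each division multiplies the $\ell^{1}$-norm by at most the support diameter $\leq l$) is a correct and genuinely more elementary argument for the special polynomial $(1-x)^{m-1}$, and combined with Lemma \ref{t5} it does give $\Delta_{H_m}^{\z\wre\z}(l)\preceq l^{m}$. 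Note also that to convert length bounds on elements of $H_m\cap W$ into the distortion statement you should route the comparison through Lemma \ref{r2} (or Lemma \ref{hpu}) rather than only through elements of the base, but that is bookkeeping, not a conceptual issue; the real defect to fix is the test polynomial in the lower bound.
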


Corollary \ref{xx} is proved at the end of this paper. Because the subgroup $\langle [a,b], b \rangle$ of $\z \wre \z$ is normal, it follows by induction that the distorted subgroup $H_m$ is subnormal.

\begin{rem}\label{mmm}
There are distorted embeddings from the group $\z \wre \z$ into itself as a normal subgroup. For example, the map defined on generators by $b \mapsto b, a \mapsto [a,b]$ extends to an embedding, and the image is a quadratically distorted subgroup by Corollary \ref{xx}. By Lemma \ref{t2}, $\z \wre \z$ is the smallest example of a metabelian group embeddable to itself as a normal subgroup with distortion.
\end{rem}

\begin{cor}\label{sc}
There is a distorted embedding of $\z \wre \z$ into Thompson's group $F$. 
\end{cor}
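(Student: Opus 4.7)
The plan is to compose two embeddings: the undistorted embedding of $\z\wre\z$ into Thompson's group $F$ constructed in \cite{cleary}, with the distorted self-embedding of $\z\wre\z$ coming from Corollary \ref{xx} (see Remark \ref{mmm}). Distortion behaves well under such compositions: composing an undistorted embedding $\psi:\z\wre\z\hookrightarrow F$ with an embedding $\iota:\z\wre\z\hookrightarrow\z\wre\z$ whose image is distorted will yield an embedding of $\z\wre\z$ into $F$ whose image is still distorted.

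More concretely, let $\psi:\z\wre\z\hookrightarrow F$ be the undistorted embedding from \cite{cleary}, and let $\iota:\z\wre\z\to\z\wre\z$ be the injective endomorphism defined on the standard generators by $b\mapsto b$ and $a\mapsto[a,b]$, whose image is the subgroup $K=\langle b,[a,b]\rangle$ discussed in Remark \ref{mmm}. By Corollary \ref{xx}, the subgroup $K$ has quadratic distortion inside $\z\wre\z$: there is a sequence of elements $h_n\in K$ with $|h_n|_K\succeq |h_n|_{\z\wre\z}^{\,2}$, where $|\cdot|_K$ refers to word length with respect to the generating set $\{b,[a,b]\}$ of $K$ and $|\cdot|_{\z\wre\z}$ refers to word length with respect to $\{a,b\}$.

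I would then analyze the composition $\psi\circ\iota:\z\wre\z\hookrightarrow F$, or equivalently, the subgroup $\psi(K)\leq F$. Since $\psi$ is an undistorted embedding, there is a constant $C>0$ with $|\psi(g)|_F\leq C\,|g|_{\z\wre\z}\leq C\,|\psi(g)|_F$ for every $g\in\z\wre\z$, so the word length of $\psi(h_n)$ in $F$ is equivalent to $|h_n|_{\z\wre\z}$. On the other hand, the word length of $\psi(h_n)$ as an element of $\psi(K)$, computed with respect to the generating set $\{\psi(b),\psi([a,b])\}$, equals $|h_n|_K$. Combining,
\[
|\psi(h_n)|_{\psi(K)}=|h_n|_K\succeq |h_n|_{\z\wre\z}^{\,2}\succeq |\psi(h_n)|_F^{\,2},
\]
so by Remark \ref{poa} (applied to the sequence of norms $|\psi(h_n)|_F$, which grows at most geometrically) the distortion of $\psi(K)$ in $F$ is at least quadratic. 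In particular, $\psi\circ\iota$ is the required distorted embedding of $\z\wre\z$ into Thompson's group $F$.

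I do not anticipate a serious obstacle: the only point requiring care is the bookkeeping among the three word metrics (on $\z\wre\z$ with generators $\{a,b\}$, on $K\cong\z\wre\z$ with generators $\{b,[a,b]\}$, and on $F$), together with choosing the sequence $h_n$ furnished by the proof of Corollary \ref{xx} so that Remark \ref{poa} applies and upgrades pointwise inequalities into an inequality of distortion functions.
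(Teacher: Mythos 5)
Your proposal is correct and takes essentially the same route as the paper: compose the distorted self-embedding $b\mapsto b$, $a\mapsto[a,b]$ of Remark \ref{mmm}/Corollary \ref{xx} with an embedding of $\z\wre\z$ into Thompson's group $F$. The only (harmless) difference is that the paper uses the embedding from \cite{sapir} and needs no undistortedness at all, since by Lemma \ref{wkf} the distortion of $\psi(K)$ inside $\psi(\z\wre\z)$ already bounds its distortion in $F$ from below, whereas you invoke the undistorted embedding from \cite{cleary}, which also works but is slightly more than is required.
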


Under the embedding of Remark \ref{mmm}, $\z \wre \z$ embeds into itself as a distorted subgroup. It is proved in \cite{sapir} that $\z \wre \z$ embeds to $F$. Therefore, Corollary \ref{sc} is true. 

It is interesting to contrast Theorem \ref{x} part $(2)$ with the following, which will be discussed in Section \ref{ppq}. Throughout this paper, we use the convention that $\z_n$ represents the finite group $\z/n\z$.

\begin{prop}\label{pqp}
The group $G=\z_n \wre \z^k$ for $n\ge 1$, has a finitely generated subgroup $H$  with distortion at least $l^{k}$.
\end{prop}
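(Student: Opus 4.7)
The plan is to exhibit a specific finitely generated subgroup $H \le G = \z_n \wre \z^k$ together with a family of elements $\{w_L\}_{L\ge 1} \subset H$ satisfying $|w_L|_G \preceq L$ but $|w_L|_H \succeq L^k$; by the definition of distortion this immediately forces $\Delta_H^G(L) \succeq L^k$. The main technical input is the length formula for wreath products, which should already be available in this paper: for $w = (f, g) \in A \wre B$ with the standard generating set, $|w| \asymp |g|_B + T_B(\{0, g\} \cup \supp f) + \sum_{v \in \supp f} |f(v)|_A$, where $T_B$ is the length of a shortest walk in $\cay(B)$ from $1$ to $g$ visiting every point of $\supp f$. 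Since $A = \z_n$ is finite, the lamp-value sum is bounded by a constant times $|\supp f|$, so one reduces to $|w|_G \asymp |g|_{\z^k} + T_{\z^k}(\{0,g\} \cup \supp f)$.

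For the subgroup itself, I would take $H$ to be a $2$- or $3$-generated subgroup of $G$ in which the generators involve $a$ twisted by certain translations, for
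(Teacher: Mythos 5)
Your general strategy is the right one and is in fact the paper's: fix a subgroup $H$, produce test elements that are short in $G$ but long in $H$, and control both lengths via the wreath-product length formula (Theorem \ref{t6}), noting that for finite $A=\z_n$ the lamp contribution is comparable to the size of the support, so everything reduces to the travelling-salesman term in $\cay(\z^k)$. However, the proposal stops exactly where the proof has to begin: you never specify $H$, never specify the elements $w_L$, and never carry out either length estimate, so as written there is no argument. The entire content of the proposition lies in those choices. The paper takes $H=\gp\langle w,b_1,\dots,b_k\rangle$ with $w=[\dots[[a,b_1],b_2],\dots,b_{k-1}]$, i.e.\ $w=(1-x_1)\cdots(1-x_{k-1})a$ in module notation, and the test elements $f_l(x_1)\cdots f_l(x_k)w$ with $f_l(x)=1+x+\cdots+x^{l-1}$.

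Two points are then essential and are absent from your sketch. First, the lower bound $|w_L|_H\succeq l^k$ is not accessible from the length formula for $G$ alone: you need to know the intrinsic metric of $H$, and the paper gets it from the analogue of Lemma \ref{ss}, namely that $H\cong\z_n\wre\z^k$ with $w$ serving as the lamp generator, so Theorem \ref{t6} applies \emph{inside} $H$, where the element $f_l(x_1)\cdots f_l(x_k)w$ has full support on an $l\times\cdots\times l$ cube of $l^k$ points. Your phrase ``generators involve $a$ twisted by certain translations'' gestures at this but gives neither the subgroup nor a reason why its word metric can be computed. Second, the upper bound $|w_L|_G\preceq l$ rests on an algebraic collapse: $(1-x_i)f_l(x_i)=1-x_i^l$, so in $G$-coordinates the same element equals $(1-x_1^l)\cdots(1-x_{k-1}^l)f_l(x_k)a$, whose support lies only along edges of the cube and is traversed by a path of linear length. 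Without a choice of $w$ for which this telescoping occurs, there is no reason the elements you would write down are short in $G$, and the distortion estimate does not follow. So the proposal, while aimed in the right direction, has a genuine gap: the construction and both key computations are missing.
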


Some of the techniques to be introduced in this paper include some computations with polynomials. We will use the theory of modules over principal ideal domains in Section \ref{mt} to reduce the problem of subgroup distortion in $\z^k \wre \z$ to the consideration of certain $2$-generated subgroups in $\z \wre \z$. Every such subgroup is associated with a polynomial, and therefore we need to define and compute the distortion of arbitrary polynomial, as in Theorem \ref{tgs}. All of these techniques are used in conjunction with Theorem \ref{t6}, which provides a formula for computing the word length in arbitrary wreath product and makes computing subgroup distortion more tangible in the examples we consider. 

\section{Background and Preliminaries}
\subsection{Subgroup Distortion}

Here we provide some examples of distortion as well as some basic facts to be used later on.

\begin{ex}
\item 1. Consider the three-dimensional Heisenberg group $\mathcal{H}^3 = \langle a, b, c | c=[a,b], [a,c]=[b,c] = 1 \rangle.$ It has cyclic subgroup $\langle c \rangle_{\infty}$ with quadratic distortion, which follows from the equation $c^{l^2} = [a^l,b^l]$. 
\item 2. The Baumslag-Solitar Group $BS(1,2) = \langle a, b | b a b^{-1} = a^2 \rangle$ has cyclic subgroup $\langle a \rangle_{\infty}$ with at least exponential distortion, because $a^{2^l} =b^lab^{-l}.$
\end{ex}

However, there are no similar mechanisms distorting subgroups in $\z \wre \z$. Therefore, a natural conjecture would be that free metabelian groups or the group $\z \wre \z$ do not contain distorted subgroups. This conjecture was brought to the attention of the authors by Denis Osin. The result of Theorem \ref{x} shows that the conjecture is not true.

The following facts are well-known and easily verified. When we discuss distortion functions, it is assumed that the groups under consideration are finitely generated. 

\begin{lemma}\label{wkf}
\item 1. If $H \leq G$ and $[G:H] < \infty$ then $\Delta_H^G(l) \approx l$.
\item 2. If $H \leq K \leq G$ then $\Delta_{H}^K(l) \preceq \Delta_H^G(l).$

\item 3. If $H \leq K \leq G$ then $\Delta_{H}^G(l) \preceq \Delta_K^G((\Delta_H^K(l)).$
\item 4. If $H$ is a retract of $G$ then $\Delta_H^G(l) \approx l$.

\item 5. If $G$ is a finitely generated abelian group, and $H \leq G$, then $\Delta_H^G(l) \approx l$.
\end{lemma}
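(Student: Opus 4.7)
My plan is to verify each of the five claims directly from the definition. Fix finite generating sets $T, T_K, S$ for $G, K, H$ respectively, and write $|\cdot|_G, |\cdot|_K, |\cdot|_H$ for the associated word lengths. The only recurring input I need is the trivial observation that for an inclusion $L\le M$ of finitely generated groups with chosen generating sets, each generator of $L$ has bounded word length in $M$, so $|w|_M \le C|w|_L$ for every $w\in L$.

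For part 1, I would invoke the standard Schreier-transversal / \v Svarc--Milnor argument: if $[G:H]<\infty$, pick a finite set of right coset representatives, form the associated Schreier generating set for $H$, and observe that each Schreier generator is a $T$-word of bounded length. Any $T$-word of length $l$ representing $w\in H$ then rewrites into a Schreier word of length $\le l$, which bounds $|w|_H$ linearly.

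Parts 2 and 3 are immediate from the recurring observation applied to $T_K \subset K \le G$, giving $|w|_G \le C|w|_K$ for every $w\in K$. For part 2, $|w|_K\le l$ forces $|w|_G\le Cl$, hence $|w|_H \le \Delta_H^G(Cl)$. For part 3, given $w\in H$ with $|w|_G\le l$, first $|w|_K \le \Delta_K^G(l)$, then $|w|_H \le \Delta_H^K(|w|_K) \le \Delta_H^K(\Delta_K^G(l))$. For part 4, given a retraction $r\colon G\twoheadrightarrow H$, each $t\in T$ has $r(t)\in H$ of some bounded $S$-length $\le C$; applying $r$ term-by-term to a $T$-word of length $l$ representing $w\in H$ produces an $S$-word of length $\le Cl$ representing $r(w)=w$.

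For part 5, I would use the structure theorem to write $G = \z^n \oplus T_G$ with $T_G$ finite, and observe that for any generating set of $G$ (respectively of $H$) the associated word length is, up to bounded additive error coming from the finite torsion summand, the $\ell^1$-norm on the free part with respect to the chosen basis. Restricted to $H$, the intrinsic $H$-word-length and the ambient $G$-word-length become $\ell^1$-norms on the same rank-$k$ free part, taken with respect to two different $\z$-bases of the same $k$-dimensional rational subspace of $\q^n$; equivalence of norms on $\mathbb R^k$ then forces $\Delta_H^G(l)\preceq l$. No step presents a real difficulty; the only points of care are keeping the composition in the correct order in part 3 and handling the bounded torsion contributions cleanly in part 5.
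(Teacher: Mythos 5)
The paper never actually proves this lemma---it is introduced with ``the following facts are well-known and easily verified''---so there is no argument of the paper's to compare against; your proofs are the standard ones and are essentially correct. Two remarks. First, in part 3 you prove $\Delta_H^G(l)\preceq \Delta_H^K\bigl(\Delta_K^G(l)\bigr)$, whereas the lemma as printed asserts $\Delta_H^G(l)\preceq \Delta_K^G\bigl(\Delta_H^K(l)\bigr)$. Your composition order is the correct general statement (first bound $|w|_K$ by $\Delta_K^G(l)$, then feed that into $\Delta_H^K$); the printed order appears to be a typo (note also its unbalanced parenthesis), since the two compositions need not agree when both distortions are superlinear. In every application of part 3 in the paper one of the two distortions involved is linear, so the discrepancy is harmless there, but you should say explicitly which inequality you are establishing. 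Second, in parts 1, 4 and 5 you only prove the upper bound $\preceq l$; the matching lower bound is automatic for infinite subgroups (as the paper notes in the introduction, distortion of an infinite subgroup is at least linear) and degenerate for finite ones, so a one-sentence remark suffices, but it should be there since the claims are stated with $\approx$. For part 5 an alternative, arguably cleaner, route is the one the paper itself uses inside Lemma \ref{abel}: every subgroup of a finitely generated abelian group is a retract of a finite-index subgroup, so part 5 follows by combining parts 1, 2 and 4; your direct comparison of $\ell^1$-norms on the free parts also works, provided you note that the torsion components contribute only a bounded amount to both word lengths.
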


\subsection{Wreath Products}

We consider the wreath products $A \wre B$ of finitely generated groups $A=\gp \langle S \rangle = \langle \{y_1, \dots, y_s\} \rangle$ and $B = \gp \langle T \rangle = \langle \{ x_1, \dots, x_t \} \rangle$. We introduce the notation that $A \wre B$ is the  semidirect product $W\lambda B$, where $W$ is the direct product $\displaystyle\times_{g \in B}A_g,$ of isomorphic copies $A_g$ of the group $A.$  We view elements of $W$ as functions from $B$ to $A$ with finite support, where for any $f \in W$, the support of $f$ is $\supp(f)=\{g \in B: f(g) \ne 1\}$. The (left) action $\circ$ of $B$ on $W$ by automorphisms is given by the following formula: for any $f \in W, g \in B$ and $x \in B$ we have that $(g\circ f)(x)=f(xg)$.

Any element of the group $A \wre B$ may be written uniquely as $wg$ where $g \in B, w \in W$. The formula for multiplication in the group $A \wre B$ is given as follows. For $g_1, g_2 \in B, w_1, w_2 \in W$ we have that $(w_1g_1)(w_2g_2)=(w_1(g_1\circ w_2))(g_1g_2).$  In particular,
$B$ acts by conjugation on $W$ in the wreath product: $gwg^{-1}=g\circ w.$ 

Therefore the wreath product is generated by the subgroups  $B$ and $A_1\le W,$ where non-trivial
functions from $A_1$ have support $\{1\}.$ In what follows,
the subgroup $A_1$ is identified with $A,$ and so $A_g=gAg^{-1},$ and $S\cup T$ is a
finite set of generators in $A \wre B.$  In particular, $\z \wre \z$ is generated by
$a$ and $b$ where $a$ generates the left (passive) infinite cyclic group and $b$ generates
the right (active) one.

Here we observe that a finitely generated abelian subgroup of $G=A \wre B $ with finitely generated abelian $A$ and $B$ is undistorted. It should be remarked that the authors are aware that  the proof of the fact that abelian subgroups of $\z^k \wre \z$  are undistorted is available in \cite{sapir}. In that paper it is shown that $\z^k \wre \z$ is a subgroup of the Thompson group $F$, and that every finitely generated abelian subgroup of $F$ is undistorted. However, our observation is elementary and so we include it.

\begin{lemma}\label{abel} Let $A$ and $B$ be finitely generated abelian groups. Then
every finitely generated abelian subgroup $H$ of  $ A \wre B$ is 
%a retract of a subgroup of finite index in $A \wre B$ and so $H$ is 
undistorted. 
%by Lemma \ref{wkf}.
\end{lemma}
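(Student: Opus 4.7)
The plan is to exploit the short exact sequence $1 \to W \to G \to B \to 1$, where $G = A \wre B$ and $W = \bigoplus_{g \in B} A_g$ is the base group of finitely supported functions $B \to A$, and to split into two cases according to $K := H \cap W$. The key preliminary observation is that since $H$ is abelian, for any $h = wb \in H$ and any $k \in K$ we have
\[
hkh^{-1} = w(b \circ k)w^{-1} = b \circ k
\]
using that $W$ is abelian, so $b \circ k = k$ for every $b \in \pi(H)$, where $\pi : G \to B$ denotes the projection. Because each $k \in K$ has finite support, and translation by an infinite-order element of $B$ cannot preserve a nonempty finite subset, we conclude that either $K = \{1\}$ or every element of $\pi(H)$ has finite order. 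In the latter case $\pi(H)$ is a finitely generated abelian torsion group, hence finite.

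In the first case, $\pi$ restricts to an injection on $H$, so $H \cong \pi(H) \leq B$. Since $\pi$ is a surjective homomorphism sending generators to generators or the identity, $|h|_G \geq |\pi(h)|_B$. By Lemma \ref{wkf}(5) the subgroup $\pi(H)$ is undistorted in $B$, which gives $|h|_H \approx |\pi(h)|_B \leq |h|_G$, so $H$ is undistorted in $G$.

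In the second case, $K$ has finite index in $H$, so by Lemma \ref{wkf}(1) it is enough to show $K$ is undistorted in $G$. Since $K$ is finitely generated, the union of the supports of a finite generating set lies in some finite $F \subseteq B$, so $K$ embeds in the finitely generated abelian group $W_F := \bigoplus_{g \in F} A_g$. By Lemma \ref{wkf}(3) and (5) we are reduced to proving that $W_F$ is undistorted in $G$. The upper bound is immediate, since each generator of $W_F$ has the form $g y_i g^{-1}$ with $g \in F$ of bounded $B$-length. For the matching lower bound I would use a pointer argument: read a word $u$ of length $\ell$ over $S \cup T$ representing $w \in W_F$, and keep a running pointer in $B$ updated by the $B$-letters of $u$; each $A$-letter contributes $\pm 1$ to exactly one coordinate of $w$, yielding
\[
\sum_{g \in F} |w(g)|_A \;\leq\; \ell \;=\; |w|_G,
\]
and the left-hand side is equivalent to $|w|_{W_F}$ since $W_F$ is finitely generated abelian. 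The only real obstacle is formalizing the pointer bookkeeping cleanly; this is precisely the elementary direction of the general length formula to appear in Theorem \ref{t6}, and it is the reason the authors flag that their argument, while included, duplicates material available through the Thompson-group embedding of \cite{sapir}.
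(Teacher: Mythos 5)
Your proposal is correct, and its skeleton is the same as the paper's: both split on $K=H\cap W$, both extract the dichotomy from the commutation relation $hkh^{-1}=b\circ k$ (the paper compresses this into ``since $H$ is abelian, $HW/W$ is finite''), and both reduce to the finitely generated group $K=H\cap W$ sitting inside a finite sum of conjugate copies of $A$ (your $W_F$ is the paper's $A'$). The genuine difference is the finishing step. The paper makes no length estimates at all: it chooses a finite-index subgroup $B'\le B$ whose cosets separate the finitely many support points, so that $W\lambda B'$ is again a wreath product $A'\wre B'$ with passive group $A'=W_F$ and of finite index in $A\wre B$; then $K\le A'$ is handled purely by the retract and finite-index facts of Lemma \ref{wkf} (every subgroup of a finitely generated abelian group is a retract of a finite-index subgroup, and the passive group is a retract of the wreath product), i.e.\ by the same mechanism as the cases $H\le A$ or $H\cap W=\{1\}$. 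You instead prove directly that $W_F$ is undistorted in $G$ via the pointer/counting estimate $\sum_{g}|w(g)|_A\le |w|_G$, which is exactly the easy half of Theorem \ref{t6}. Both finishes work; the paper's buys a proof that stays entirely within Lemma \ref{wkf} and the wreath-product re-bracketing trick (which uses that $B$ is abelian, or at least has enough finite-index subgroups), while yours is metrically self-contained, duplicates a fragment of Theorem \ref{t6} that the paper only proves later, and in its second case uses nothing about $B$ beyond finite generation. One shared imprecision, at the same level of rigor as the paper: the reduction ``$[H:K]<\infty$ and $K$ undistorted in $G$ implies $H$ undistorted in $G$'' is not literally Lemma \ref{wkf}(1) and needs the standard finite-transversal argument; and in your first case the clean chain is $|h|_H=|\pi(h)|_{\pi(H)}\preceq|\pi(h)|_B\le|h|_G$.
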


\begin{proof}
It follows from the classification of  finitely generated  abelian groups $G$ that every subgroup $S$ is a retract of a subgroup of finite index in $G$, and so we are done if $H$ is a subgroup of $A$ or $B$, or if $H \cap W = \{1\}$, by Lemma \ref{wkf}. Therefore we assume that
$H \cap W \ne \{1\}.$ Since $H$ is abelian, this implies that the the factor-group
$HW/W$ is finite. Then it suffices to prove the lemma for $H_1=H\cap W$ since $[H:H_1]\le \infty.$
%Since $H$ is abelian, it suffices to assume that $H \subset W$. 
Because $H_1$ is finitely generated, it is contained in a finite product of conjugate copies of $A$. That is to say, $H_1 \subset A'$ for a wreath product $A' \wre B' = W\lambda B'$ where $B'$ has finite index in $B$. We are now reduced to our earlier argument, thus completing the proof.
\end{proof}

\begin{rem} In fact, under the assumptions of Lemma \ref{abel}, $H$ is a retract
of a subgroup having finite index in $A \wre B.$
\end{rem}

We now return to one of the motivating ideas of this paper, and complete the explanation of Remark \ref{mmm}.

\begin{lemma}\label{t2}
The group $\z \wre \z$ is the smallest metabelian group which embeds to itself as a normal distorted subgroup in the following sense. For any metabelian group $G$, if there is an embedding $\phi: G \rightarrow G$ such that $\phi(G) \unlhd G$ and $\phi(G)$ is a distorted subgroup in $G$, then there exists some subgroup $H$ of $G$ for which $H \cong \z \wre \z.$
\end{lemma}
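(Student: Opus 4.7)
The plan is to argue by contrapositive: given an embedding $\phi:G\hookrightarrow G$ with $\phi(G)\unlhd G$ distorted, I produce a subgroup of $G$ isomorphic to $\z\wre\z$.

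The first step is to show that $A:=[G,G]$ cannot be finitely generated as an abelian group. If it were, then $G$ would be polycyclic (being an extension of the finitely generated abelian group $G/A$ by the finitely generated abelian group $A$), and the Hirsch length would be additive along the short exact sequence $1\to\phi(G)\to G\to G/\phi(G)\to 1$. Since $\phi(G)\cong G$ has the same Hirsch length as $G$, this would force $G/\phi(G)$ to have Hirsch length zero, hence to be finite; but then $\phi(G)$ would be undistorted by Lemma~\ref{wkf}(1), contradicting the hypothesis.

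By P.~Hall's theorem for finitely generated metabelian groups, $A$ is finitely generated as a module over $R:=\z[Q]$, where $Q:=G/A$; combined with the previous step this forces $Q$ to be infinite. Pick $b\in G$ whose image $bA\in Q$ has infinite order, and view $A$ as a $\z[\langle b\rangle]\cong\z[t,t^{-1}]$-module via conjugation by $b$. To produce a copy of $\z\wre\z$ it now suffices to find $a\in A$ of infinite order whose $b$-translates $\{b^{n}ab^{-n}:n\in\z\}$ are $\z$-linearly independent, for then $\langle a,b\rangle\cong\z\wre\z$.

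Producing such an $a$ is the main obstacle, since failure of $A$ to be finitely generated as an abelian group does not suffice on its own: witness $BS(1,p)=\z[1/p]\rtimes\z$, in which every element of $[G,G]=\z[1/p]$ has nonzero annihilator in $\z[t,t^{-1}]$, and which --- as a direct computation shows --- admits no distorted normal self-embedding. Here $\phi$ is used essentially. If every element of $A$ had nonzero annihilator in $\z[t,t^{-1}]$, then $A$ would be a torsion $\z[t,t^{-1}]$-module and hence of finite $\z$-rank. A rank argument analogous to the first step, applied to the intertwining relation $\phi\circ\mathrm{ad}(b)=\mathrm{ad}(\phi(b))\circ\phi$ on $A$ together with the induced endomorphism $\bar\phi$ of the finitely generated abelian group $Q$, would again force $[G:\phi(G)]<\infty$, contradicting distortion. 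Therefore some $a\in A$ has trivial annihilator in $\z[t,t^{-1}]$, and $\langle a,b\rangle\leq G$ is the required copy of $\z\wre\z$.
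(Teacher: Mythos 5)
Your steps 1 and 2 are sound (the polycyclic/Hirsch-length argument does show that $A=[G,G]$ is not finitely generated as an abelian group, and the reduction "find $a\in A$ with trivial $\z[\langle b\rangle]$-annihilator, then $\langle a,b\rangle\cong\z\wre\z$" is correct), but step 3 --- the heart of the lemma --- has a genuine gap. The claim that if every element of $A$ has nonzero annihilator in $\z[t,t^{-1}]$ then $A$ is "of finite $\z$-rank" is false: $A$ is only finitely generated over $\z[Q]$, not over $\z[\langle b\rangle]$, and when $Q$ has torsion-free rank at least two a $\z[\langle b\rangle]$-torsion module can have infinite rational rank. For instance, take $Q=\z^2=\langle s,t\rangle$ and $A=\z[x^{\pm1},y^{\pm1}]/(y-2)\cong\z[1/2][x^{\pm1}]$ with $s,t$ acting as multiplication by $x,y$; then every element of $A$ is killed by $t-2\ne 0$, yet $A$ has infinite rational rank, and $A=[G,G]$ for $G=A\rtimes\z^2$ since $(y-1)a=a$. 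This purely module-theoretic assertion does not involve $\phi$ at all, so the subsequent appeal to $\phi$ cannot repair it. The same example shows that whether some $a\in A$ has trivial annihilator over $\z[\langle b\rangle]$ genuinely depends on which $b$ you fixed ($b=t$ fails while $b=s$ works), and your argument chooses an arbitrary $b$ of infinite order modulo $A$ with no mechanism for choosing a good one. Finally, even the fallback "rank argument analogous to the first step" is only gestured at: once $A$ is infinitely generated $G$ is no longer polycyclic, so the Hirsch-length additivity of step 1 does not literally apply, and the intertwining relation by itself gives nothing (the induced map on $Q$ need not even be injective).

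For comparison, the paper closes exactly this gap by a different mechanism. It iterates the self-embedding: since $G/\phi(G)$ is infinite it has an infinite cyclic subnormal factor, and because $\phi(G)\cong G$ this can be repeated inside $\phi(G),\phi^2(G),\dots$, giving subnormal series with arbitrarily many $\z$-factors and hence the much stronger conclusion that $G'$ has \emph{infinite rational rank} (your step 1 only rules out finite generation, which the $\z[1/2]$-type examples satisfy vacuously). It then writes $G'=\langle B\circ C\rangle$ with $C$ finitely generated, decomposes $B=G/G'$ as a product of cyclic groups $\langle b_k\rangle\cdots\langle b_1\rangle$, and locates the first index $i$ at which the rank of $\langle\langle b_i\rangle\cdots\langle b_1\rangle\circ C\rangle$ jumps from finite to infinite; a finite-versus-infinite rank pigeonhole then yields $a$ whose $\z[\langle b_i\rangle]$-submodule has infinite rank, hence zero annihilator, hence is free, giving $\langle a,b\rangle\cong\z\wre\z$ for a preimage $b$ of $b_i$. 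That induction over the cyclic factors of $G/G'$ is precisely the missing device for selecting a good $b$; without it, or some substitute, your step 3 does not go through.
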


\begin{proof}
By Lemma \ref{wkf}, we have that the group $G/\phi(G)$ is infinite, else $\phi(G)$ would be undistorted. Being a finitely generated solvable group, $G/\phi(G)$ must have a subnormal factor isomorphic to $\z$. Because $\phi(G) \cong G$, one may repeat this argument to obtain a subnormal series in $G$ with arbitrarily many infinite cyclic factors. Therefore, the derived subgroup $G'$ has infinite (rational) rank.

Since the group $B=G/G'$ is finitely presented, the action of $B$ by conjugation makes $G'$ a finitely generated left $B$ module. Hence, $G'=\langle B \circ C\rangle$ for some finitely generated $C \leq G'$. Because it is a finitely generated abelian group, $B=\langle b_k \rangle \cdots \langle b_1 \rangle$ is a product of cyclic groups. Therefore for some $i$ we have a subgroup $A=\langle \langle b_{i-1} \rangle \cdots \langle b_1 \rangle \circ C \rangle$ of finite rank in $G'$ but $\langle \langle b_i \rangle \circ A \rangle$ has infinite rank. Then $A$ has an element $a$ such that the $\langle b_i \rangle$-submodule generated by $a$ has infinite rank, and so it is a free $\langle b_i \rangle$-module. It follows that $a$ and $b$, where $b_i=bG'$, generate a subgroup of the form $\z \wre \z$.
\end{proof}

\subsection{Connections with Free Solvable Groups}\label{fs}
In \cite{magnus}, Magnus shows that if $F=F_k$ is an absolutely free group of rank $k$ with normal subgroup $N$, then the group $F/[N,N]$ embeds into $\z^k \wre F/N= \z^k \wre G$.
This wreath product is a semidirect product $W\lambda G$ where the action of $G$ by
conjugation turns $W$ into a free left $\z[G]$-module with $k$ generators.
For more information in an easy to read exposition, refer to \cite{rem}.

\begin{rem}
The monomorphism $\alpha : F/[N,N] \rightarrow \z^k \wre G$ is called the Magnus embedding.
\end{rem}

We let $S_{k,l}$ denote the $k$-generated derived length $l$ free solvable group.

\begin{lemma}\label{y}
If $k,l \geq 2$, then the group $S_{k,l}$ contains a subgroup isomorphic to $\z \wre \z$. 
\end{lemma}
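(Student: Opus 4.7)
The plan is to first exhibit a two-generator free metabelian subgroup inside $S_{k,l}$, and then show that such a group contains $\z \wre \z$ by means of the Magnus embedding.

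For the first step, write $F = F_k$ and consider the $(l-2)$-nd derived subgroup $F^{(l-2)}$. By Nielsen--Schreier it is itself a free group, of rank $k$ when $l = 2$ and of countably infinite rank when $l \ge 3$ (since $F_k'$ already has infinite rank for $k \ge 2$). Because the derived series satisfies $(F^{(l-2)})'' = F^{(l)}$, the subgroup $S_{k,l}^{(l-2)} = F^{(l-2)}/F^{(l)}$ of $S_{k,l}$ is precisely the free metabelian group on a free basis of $F^{(l-2)}$, and this rank is at least $2$. Picking any two free generators $u,v$ of $F^{(l-2)}$ and retracting onto them (via the map sending the remaining basis elements to $1$) produces an embedding $S_{2,2} \hookrightarrow S_{k,l}^{(l-2)} \le S_{k,l}$.

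For the second step, I would use the Magnus embedding $\alpha : S_{2,2} \hookrightarrow \z^2 \wre \z^2$ from Subsection \ref{fs}. Here the base group $W$ is a free $\z[G]$-module of rank $2$ on generators $\epsilon_u, \epsilon_v$, with $G = \z^2 = \langle t_u, t_v \rangle$. A direct commutator computation yields $\alpha([u,v]) = (1 - t_v)\epsilon_u + (t_u - 1)\epsilon_v \in W$. I would then show that $H = \langle v, [u,v] \rangle$ is isomorphic to $\z \wre \z$: the element $v$ has infinite order, $[u,v]$ lies in the abelian normal subgroup $W$, and the conjugates $v^n [u,v] v^{-n}$ correspond to the translates $t_v^n \cdot \alpha([u,v])$ in $W$. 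Any relation $\sum c_n t_v^n \cdot \alpha([u,v]) = 0$ forces $\bigl(\sum c_n t_v^n\bigr)(1 - t_v) = 0$ in $\z[G]$ by examining the $\epsilon_u$-coordinate, and since $\z[G]$ is an integral domain, all $c_n$ must vanish. Thus the $\z$-span of the conjugates of $[u,v]$ under powers of $v$ is a free abelian group of countable rank with $v$ acting by shift, and $H \cong \z \wre \z$.

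Composing the two embeddings gives $\z \wre \z \hookrightarrow S_{2,2} \hookrightarrow S_{k,l}$, completing the proof. The main obstacle is the first step: identifying $S_{k,l}^{(l-2)}$ with a free metabelian group of rank at least $2$ requires keeping careful track of the iterated derived subgroups of $F_k$. Once that is in place, the second step reduces to a standard $\z[G]$-module calculation.
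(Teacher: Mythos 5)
Your proof is correct, but it follows a more explicit route than the paper, whose entire proof is a one-line appeal to a well-known fact: any nontrivial $a \in S_{k,l}^{(l-1)}$ together with any $b \notin S_{k,l}^{(l-1)}$ generates a copy of $\z \wre \z$ (this is folklore and, as the paper notes, follows from the Magnus embedding). You instead prove a weaker but sufficient statement in two self-contained steps: first you locate a copy of $S_{2,2}$ inside $S_{k,l}$ by observing that $S_{k,l}^{(l-2)} = F^{(l-2)}/F^{(l)} = F^{(l-2)}/(F^{(l-2)})''$ is free metabelian of rank at least $2$ (Nielsen--Schreier plus the retraction onto two basis elements), and then you verify directly, via the Magnus matrices, that $\langle v, [u,v]\rangle \leq S_{2,2}$ is $\z \wre \z$; your computation $\alpha([u,v]) = (1-t_v)\epsilon_u + (t_u-1)\epsilon_v$ is correct, the freeness of the $\z[t_u^{\pm 1}, t_v^{\pm 1}]$-module on $\epsilon_u,\epsilon_v$ together with the domain property gives the linear independence of the $t_v$-translates, and the splitting over $\langle v\rangle$ is immediate since no nontrivial power of $v$ lies in the base group. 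In effect your second step is a hands-on verification of the special case ($a=[u,v]$, $b=v$ in $S_{2,2}$) of the general fact the paper cites, and your first step reduces the lemma to that special case rather than handling arbitrary $a \in S_{k,l}^{(l-1)}$, $b \notin S_{k,l}^{(l-1)}$. What your approach buys is a complete, citation-free argument; what it costs is the extra reduction through $S_{2,2}$ and the restriction to a particular generating pair, whereas the paper's cited statement is stronger (it applies to any such pair), though that extra strength is not needed for the lemma. One small point to tighten: for $l \ge 4$ the infinite rank of $F^{(l-2)}$ follows because it is the derived subgroup of a free group of infinite rank (your parenthetical only covers $l=3$), but all you actually use is rank at least $2$, so nothing breaks.
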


\proof It is well known (and follows from the Magnus embedding) that any nontrivial $a \in S_{k,l}^{(l-1)}$ and $b \notin S_{k,l}^{(l-1)}$ generate $\z \wre \z$.\endproof

\medskip

It should be noted that by results of \cite{shmelkin}, the group $\z \wre \z^2$ can not be embedded into any free metabelian or free solvable groups.

\medskip

Subgroup distortion has connections with the membership problem.
It was observed in \cite{gromov} and proved in \cite{farb} that for a finitely generated subgroup $H$ of a finitely generated group $G$ with solvable word problem, the membership problem is solvable in $H$ if and only if the distortion function $\Delta_{H}^{G}(l)$ is bounded by a recursive function. 

By Theorem 2 of \cite{umirbaev}, the membership problem for free solvable groups of length greater than two is undecidable. Therefore, because of the connections between subgroup distortion and the membership problem just mentioned, we restrict our primary attention to the case of free metabelian groups. It is worthwhile to note that the membership problem for free metabelian groups is solvable (see \cite{romanovskii}).

Lemma \ref{y} motivates us to study distortion in $\z \wre \z$ in order to better understand distortion in free metabelian groups. Distortion in free metabelian groups is similar to distortion in wreath products of free abelian groups, by Lemma \ref{y} and the Magnus embedding. In particular, if $k \geq 2$ then $$\z \wre \z \leq S_{k,2} \leq \z^k \wre \z^k.$$ Thus by Lemma \ref{wkf}, given $H \leq \z \wre \z$ we have $$\Delta_{H}^{\z \wre \z}(l) \preceq \Delta_{H}^{S_{k,2}}(l).$$ This explains Corollary \ref{ot}. On the other hand, given $L \leq S_{k,2}$ then we have $$\Delta_{L}^{S_{k,2}}(l) \preceq \Delta_{L}^{\z^k \wre \z^k}(l).$$ Based on this discussion, we ask the following. An answer would be helpful in order to more fully understand subgroup distortion in free metabelian groups.

\begin{question}
\item What effects of subgroup distortion are possible in $\z^k \wre \z^k$ for $k>1$? 
\end{question}

\section{Canonical Forms and Word Metric}\label{s2}

Here we aim to further understand how the length of an element of a wreath product
$A\wre B$ depends on the canonical form of this element. 
Let us start with $G=\z^k \wre \z=W\lambda\langle b \rangle,$ where $\z^k=\gp\{a_1,\dots,a_k\}.$
 %We will use the notation that $(w)_i$ equals the conjugate $bwb^{-i}$ for $
%i \in \z$ and $w \in W.$

Because the subgroup $W$ of $\z^k \wre \z = W \lambda \z$ is abelian, we also use additive notation to represent elements of $W$.

\begin{rem}\label{yyy}
In the case of $\z \wre \z=\langle a \rangle \wre \langle b \rangle$, we use module language to write any element as $$w={\sum_{i=-\infty}^{\infty}}  m_i (b^i \circ a)=f(b)a=f(x)a \textrm{, where } f(x)={\sum_{i=-\infty}^{\infty}} m_ix^i$$ is a Laurent polynomial in $x$, and the sums are finite.
%, indicated by the $^{\circ}$ symbol.
%\end{rem}
%\begin{rem}\label{t4} 
Similarly, if $A$ is a finitely generated abelian group, then by the definition of $A \wre \z,$ 
arbitrary element in $A \wre \z$ is 
%(in module notation for the abelian subgroup $W$) 
of the form $$wb^t=\left(\sum_{i=1}^kf_i(x)a_i \right) b^t,$$ where $f_i(x)$ are Laurent polynomials and $a_i$-s generate $A$. This form is unique if $A=\z^k$ is a free abelian
with basis $a_1,\dots,a_k$.
\end{rem}

We will use the notation that $(w)_i$ equals the conjugate $bwb^{-i}$ for 
$i \in \z$ and $w \in W.$
The normal form described in Remark \ref{yyy} for elements of $A\wre \z$
%, where $A$ is a finitely generated abelian group, 
is necessary to obtain a general formula for computing the word length. 

\begin{rem}\label{yy}
Arbitrary element of $A \wre \z$ may be written in a normal form, following \cite{ct}, as
$$\left( (u_1)_{\iota_1}+ \cdots +(u_N)_{\iota_N}+(v_1)_{-\epsilon_1}+ \cdots +(v_M)_{-\epsilon_M}
\right)b^t$$ 
where  $0 \leq \iota_1 < \cdots <\iota_N, 0 < \epsilon_1 < \cdots < \epsilon_M$, and $u_1, \dots, u_N, v_1, \dots, v_M$ are elements in $A\backslash\{1\}.$
\end{rem}

The following formula for the word length in $A \wre \z$ is given in \cite{ct}.

\begin{lemma}\label{t5}
Given an element in $A \wre \z$ having normal form as in Remark \ref{yy}, its length is given by the formula $$\sum_{i=1}^N |u_i|_{A} + \sum_{i=1}^M |v_i|_{A} + \min \{ 2 \epsilon_M +\iota_N +|t-\iota_N|, 2\iota_N+\epsilon_M+|t+\epsilon_M|\}.$$ 
where $|*|_A$ is the length in the group $A.$ 
\end{lemma}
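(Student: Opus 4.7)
The plan is to prove the formula by establishing matching upper and lower bounds. For the upper bound, I would build an explicit word of the claimed length by choosing one of two natural "tours" of the support of $w$. Starting from the identity at $b$-position $0$, the first strategy moves right to $\iota_N$, depositing a geodesic $A$-word for each $u_i$ as the position $\iota_i$ is reached; then it reverses and moves left past $0$ to $-\epsilon_M$, depositing each $v_j$ at position $-\epsilon_j$; and finally it moves to $t$. This costs $2\iota_N + \epsilon_M + |t+\epsilon_M|$ in $b$-letters plus $\sum|u_i|_A + \sum|v_j|_A$ in $A$-letters. The second strategy (left first, then right, then to $t$) costs $2\epsilon_M + \iota_N + |t-\iota_N|$ in $b$-letters plus the same $A$-length. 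Taking the shorter of the two yields the claimed upper bound.

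For the lower bound, I would take an arbitrary word $\omega=c_1\cdots c_L$ in the generating set $S\cup\{b^{\pm 1}\}$ representing $wb^t$, and track the running $b$-exponent $p_k = \#\{i\le k: c_i=b\} - \#\{i\le k: c_i=b^{-1}\}$, with $p_0=0$ and $p_L=t$. This gives a walk on $\mathbb{Z}$, whose edge count equals the number of $b$-letters in $\omega$. For each $p\in\mathbb{Z}$, the subword $\omega_p$ consisting of the $A$-letters $c_k$ with $p_{k-1}=p$ (concatenated in order) evaluates, via the semidirect product structure, to the value of $w\in W$ at position $p$. Hence if $p=\iota_i$ then $|\omega_p|\ge |u_i|_A$, and if $p=-\epsilon_j$ then $|\omega_p|\ge |v_j|_A$; since every $u_i$ and $v_j$ is nontrivial, the walk is forced to visit each $\iota_i$ and each $-\epsilon_j$.

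It remains to lower-bound the number of $b$-edges in any walk on $\mathbb{Z}$ that starts at $0$, ends at $t$, and visits both $-\epsilon_M$ and $\iota_N$. A short case analysis on whether the walk visits $\iota_N$ before or after $-\epsilon_M$ shows that its edge count is at least
\[
\min\bigl\{2\epsilon_M + \iota_N + |t-\iota_N|,\ 2\iota_N + \epsilon_M + |t+\epsilon_M|\bigr\};
\]
indeed, in the first case the walk covers the interval $[-\epsilon_M,0]$ twice plus $[0,\iota_N]$ once plus $|t-\iota_N|$, and symmetrically in the second case. Summing with $\sum_p|\omega_p|\ge \sum|u_i|_A + \sum|v_j|_A$ yields the required lower bound and closes the inequality.

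The main obstacle I expect is the walk lower bound: I must argue cleanly that no "zigzag" walk can beat the two straightforward tours, and handle degenerate cases ($N=0$, $M=0$, or $\iota_1=0$) so that the formula remains correct when one of the extreme visits is forced by the start/end rather than by the support of $w$.
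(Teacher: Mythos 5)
Your proposal is correct, and it is worth noting how it sits relative to the paper: the paper does not prove Lemma \ref{t5} at all, but quotes it from Cleary--Taback \cite{ct}, and instead proves the generalization Theorem \ref{t6}, which expresses $|wg|_{S,T}$ as $\|w\|_A$ plus the minimal length of a path in the Cayley graph of $B$ from $1$ through the support of $w$ to $g$. Your argument is essentially the specialization of that proof to $B=\z$: your upper bound via the two tours is the paper's ``read off a word from a path'' direction, and your lower bound via the running $b$-exponent, with the letters sorted by the position at which they are deposited, is exactly the paper's pseudo-canonical-form bookkeeping (the quantity $\Psi$), so the two routes are the same in substance. What your write-up adds beyond the paper is the explicit evaluation of the path minimum on $\z$ --- the case analysis showing any walk from $0$ to $t$ visiting both extremes $\iota_N$ and $-\epsilon_M$ has length at least $\min\{2\epsilon_M+\iota_N+|t-\iota_N|,\ 2\iota_N+\epsilon_M+|t+\epsilon_M|\}$ (argue on whether the first visit to $\iota_N$ precedes the first visit to $-\epsilon_M$, splitting the walk into three segments); the paper leaves this step implicit in the definition of $\trace(u)$ and in the citation. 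Do spell out that case analysis and the degenerate conventions ($N=0$ or $M=0$, where $\iota_N$ or $\epsilon_M$ should be read as $0$), and fix the sign/orientation convention for which coordinate of $W$ a letter deposited at running exponent $p$ lands in (the paper's ``Caution'' about left versus right actions is precisely about this); with those details your proof is complete.
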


The formula from Lemma \ref{t5} becomes more intelligible if one extends it to
wreath products $A \wre B$ of arbitrary finitely generated groups. We want to
obtain such a generalization in this section since we consider non-cyclic active
groups in Section \ref{ppq}. We fix the notation that, with respect to the symmetric generating set $T=T^{-1}$, the Cayley graph $\cay(B)$ is defined as follows. The set of vertices is all elements of $G$. For any $g \in G, t \in T$, $g$ and $gt$ are joined by an edge pointing from $g$ to $gt$ whose label is $t$.

Any $u \in A \wre B$ can be expressed as follows:
\begin{equation}\label{r8}
 (b_1\circ a_1)\dots (b_r\circ a_r) g
\end{equation} 
where $g \in B, w=(b_1\circ a_1)\dots (b_r\circ a_r)\in W, 1 \ne a_j \in A, b_j \in B$ and for $i \ne j$ we have $b_i \ne b_j$. The expression $(\ref{r8})$ is unique, up to a rearrangement of the (commuting) factors $b_j\circ a_j$.

For any $u=wg \in A \wre B$ with canonical form as in Equation $(\ref{r8})$ we consider the set $P$ of paths in the Cayley graph $\cay(B)$ which start at $1$, go through every vertex $b_1, \dots, b_r$ and end at $g$. We introduce the notation that $$\trace(u)=\min\{||p||: p \in P\},$$ $$\spann(u)=\textrm{ the particular } p \in P \textrm{ realizing } \trace(u)=||p||.$$ We also define the norm of any such representative $w$ of $W$ by $$||w||_A= \sum_{j=1}^r|a_j|_S.$$ 

We have the following formula for word length, which generalizes that given for the case where $B= \z$ in the paper \cite{ct}. ({\bf Caution}: The {\it right}-action definition of wreath product
would be incompatible with the standard definition of Cayley graph in the proof of Theorem \ref{t6}.)

\begin{theorem}\label{t6}
For any element $u=wg \in A \wre B$, we have that $$|wg|_{S,T} = ||w||_A+\trace(u)$$ where $u=(b_1\circ a_1)\dots (b_r\circ a_r) g$ is the canonical form of Equation $(\ref{r8})$.
\end{theorem}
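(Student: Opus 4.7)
The plan is to prove the two inequalities $|wg|_{S,T}\le\|w\|_A+\trace(u)$ and $|wg|_{S,T}\ge\|w\|_A+\trace(u)$ separately. For the upper bound I would argue constructively. Write $\spann(u)$ as an edge sequence $t_{k_1}\cdots t_{k_n}$ with $n=\trace(u)$, and let $v_\ell=t_{k_1}\cdots t_{k_\ell}$ denote its vertices in $\cay(B)$, so $v_0=1$ and $v_n=g$. For each $j\in\{1,\dots,r\}$ choose an index $\ell_j$ with $v_{\ell_j}=b_j$, relabel so that $\ell_1<\cdots<\ell_r$, and let $\tilde a_j$ be a geodesic word in $S^{\pm 1}$ representing $a_j$. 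Inserting the $\tilde a_j$ into the edge sequence at positions $\ell_j$ produces the word
$$\omega_0 = t_{k_1}\cdots t_{k_{\ell_1}}\,\tilde a_1\,t_{k_{\ell_1+1}}\cdots t_{k_{\ell_r}}\,\tilde a_r\,t_{k_{\ell_r+1}}\cdots t_{k_n}.$$
Evaluating $\omega_0$ left-to-right in $A\wre B$ using $v_{\ell_j}\tilde a_j=(v_{\ell_j}\circ a_j)v_{\ell_j}=(b_j\circ a_j)b_j$ at each insertion shows that $\omega_0$ represents $(b_1\circ a_1)\cdots(b_r\circ a_r)g=u$, and its length is $n+\sum_j|a_j|_S=\trace(u)+\|w\|_A$.

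For the lower bound, take any word $\omega$ over $S^{\pm 1}\cup T$ of length $L=|wg|_{S,T}$ representing $u$, with $L_A$ letters from $S^{\pm 1}$ and $L_T$ letters from $T$. The $T$-letters, read in order, form a word whose prefixes trace a path $p'$ in $\cay(B)$ with vertices $v'_0=1,v'_1,\dots,v'_{L_T}$; projecting $\omega$ to $B$ forces $v'_{L_T}=g$. The $A$-letters fall into $L_T+1$ gaps, with those in gap $\ell$ spelling a word $\alpha_\ell$ of some string-length $|\alpha_\ell|$ whose value in $A$ we also denote by $\alpha_\ell$. Repeatedly applying $bw=(b\circ w)b$ to push all $T$-letters to the right rewrites $\omega$ in the form
$$\omega = \prod_{\ell=0}^{L_T}(v'_\ell\circ\alpha_\ell)\cdot g.$$
Comparing with the canonical form of $u$ and using that distinct copies $A_v\le W$ commute yields, for each $j$, the equation $\prod_{\ell:\,v'_\ell=b_j}\alpha_\ell=a_j$ in $A$ (multiplied in gap-order), with trivial product at every other vertex visited by $p'$.

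Since $a_j\ne 1$, each $b_j$ must actually occur among $v'_0,\dots,v'_{L_T}$; hence $p'$ is a $1$-to-$g$ path in $\cay(B)$ through $b_1,\dots,b_r$, giving $L_T=\|p'\|\ge\trace(u)$. By the triangle inequality in $A$ applied to $\prod_{\ell:\,v'_\ell=b_j}\alpha_\ell=a_j$, one obtains $\sum_{\ell:\,v'_\ell=b_j}|\alpha_\ell|\ge|a_j|_S$ for each $j$; summing and noting that contributions at other vertices only add to $L_A$ gives $L_A\ge\sum_j|a_j|_S=\|w\|_A$. Combining yields $L\ge\trace(u)+\|w\|_A$, which completes the proof. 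The main obstacle is purely bookkeeping: pushing the $T$-letters cleanly to the right in $\omega$, interpreting the combined contribution at each vertex as an ordered product rather than a sum (since $A$ need not be abelian), and observing that vertices visited by $p'$ outside $\{b_1,\dots,b_r\}$ only waste $A$-letters without affecting the required lower bounds.
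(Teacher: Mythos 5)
Your proposal is correct and follows essentially the same strategy as the paper: the upper bound by turning the optimal path $\spann(u)$ into an explicit word with geodesic $A$-syllables inserted at the support vertices, and the lower bound by rewriting a geodesic word into the pseudo-canonical form $\prod_\ell (v'_\ell\circ\alpha_\ell)g$ and applying triangle inequalities in $A$ together with the fact that the traced path must pass through every support vertex and end at $g$. The only difference is cosmetic bookkeeping (you group contributions vertex-by-vertex at once, while the paper collapses repeated vertices pairwise via its functional $\Psi$).
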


\proof We will use the following pseudo-canonical (non-unique) form in the proof. This is just the expression of Equation $(\ref{r8})$ but without the assumption that all $b_j$ are distinct
or $a_j$-s are non-trivial. 
 
For any element $u \in A \wre B$ which is expressed in pseudo-canonical form we may define a quantity depending on the given factorization by $$\Psi((b_1\circ a_1)\dots (b_r\circ a_r) g)=\sum_{j=1}^r|a_j|_S+|b_1|_T +|b_1^{-1}b_2|_T+\dots + |b_{r-1}^{-1}b_r|_T+|b_r^{-1}g|_T.$$ 

First we show that for $u$ in canonical form $(\ref{r8}),$ it holds that $|u|_{S,T} \geq ||w||_A+\trace(u)$.

By the choice of generating set $\{S,T\}$ of $A \wre B$, we have that any element $u \in A \wre B$ may be written as
\begin{equation}\label{t7}
u=g_0h_1g_1 \cdots h_mg_m
\end{equation}
 where $m\ge 0, g_i\in B, h_j\in A, g_0$ and $g_m$ can be trivial, but all other factors are non-trivial. We may choose the expression \eqref{t7} so that $|u|_{S,T}=\sum_{j=1}^m|h_j|_S+\sum_{i=0}^m|g_i|_T .$ Observe that we may use the expression from Equation $(\ref{t7})$ to write 
 \begin{equation}\label{r7}
 u=(x_1\circ h_1) \dots (x_{m}\circ h_m)g
 \end{equation}
 where $g=g_0 \dots g_m$  and $x_j=g_0\dots g_{j-1}$, for $j=1, \dots, m$.
 
Then we have by definition that for the pseudo-canonical form $(\ref{r7})$,
\begin{equation}\label{ppd}
\Psi((x_1\circ h_1) \dots (x_{m}\circ h_m)g)=\sum_{j=1}^m|h_j|_S+|x_1|_T+ |x_1^{-1}x_2|_T +\dots + |x_{m-1}^{-1}x_m|_T +|x_m^{-1}g|_T$$
$$=\sum_{j=1}^m|h_j|_S+\sum_{i=0}^m|g_i|_T = |u|_{S,T}.
\end{equation}

It is possible that in the form of Equation $(\ref{r7})$, some $x_i = x_j$ for $1 \leq i \ne j \leq m$. 
When taking $u$ to the canonical form $wg=(b_1\circ a_1) \dots (b_{r}\circ a_r)g$ of Equation $(\ref{r8})$, we claim that 
\begin{equation}\label{one}
||w||_A \leq \displaystyle\sum_{j=1}^m|h_j|_S
\end{equation} and that 
\begin{equation}\label{two}
\trace(u) \leq |x_1|_T+ |x_1^{-1}x_2|_T+\dots + |x_{m-1}^{-1}x_m|_T +|x_m^{-1}g|_T.
\end{equation}
Obtaining the canonical form requires a finite number of steps of the following nature. We take an expression such as $$(x_1\circ h_1) \dots (x_i\circ h_i)\dots (x_i\circ h_j)\dots (x_{m}\circ h_m)$$ and replace it with 
$$(x_1\circ h_1) \dots (x_i\circ h_ih_j)\dots (x_{j-1}\circ h_{j-1})(x_{j+1}\circ h_{j+1})\dots (x_{m}\circ h_m).$$ The assertion of Equation $(\ref{one})$ follows because $$|h_ih_j|_S \leq |h_i|_S+|h_j|_S.$$ Equation $(\ref{two})$ is true because $$|x_{j-1}^{-1}x_{j+1}|_T \leq |x_{j-1}^{-1}x_j|_T+|x_j^{-1}x_{j+1}|_T,$$ which implies that 
$$|b_1|_T +|b_1^{-1}b_2|_T+\dots + |b_{r-1}^{-1}b_r|_T+|b_r^{-1}g|_T$$ $$\le |x_1|_T +|x_1^{-1}x_2|_T+\dots + |x_{m-1}^{-1}x_m|_T+|x_m^{-1}g|_T.$$
Finally, we have that $$\trace(u) \leq |b_1|_T +|b_1^{-1}b_2|_T+\dots + |b_{r-1}^{-1}b_r|_T+|b_r^{-1}g|_T,$$ because the right hand side is the length of a particular path in $P$: the path which travels from $1$ to $b_1$ to $b_2, \dots,$ to $b_r$ to $g$. It follows that the length of this path is at least as large as the length of $\spann(u)$.

Thus for a canonical form $u=(b_1\circ a_1)\dots (b_r\circ a_r) g$ we see by Equations $(\ref{ppd})$, $(\ref{one})$ and $(\ref{two})$ that $$||w||_A + \trace(u) \leq \Psi((x_1\circ h_1) \dots (x_{m}\circ h_m)g)=|u|_{S,T}.$$

 To obtain the reverse inequality, take $u=(b_1\circ a_1) \dots (b_{r}\circ a_r)g$ in $A \wre B$ in canonical form. By the definition, $\spann(u)$ will be a path that starts at $1$, goes in some order directly through all of $b_1, \dots, b_r$, and ends at $g.$ 

We may rephrase this to say that for some $\sigma \in \textrm{Sym}(r),$ there is a path $p=\spann(u)$ in $P$ such that $|p|_T=|b_{\sigma(1)}|_T+ |b_{\sigma(1)}^{-1}b_{\sigma(2)}|_T+... + |b_{\sigma(r-1)}^{-1}b_{\sigma(r)}|_T +|b_{\sigma(r)}^{-1}g|_T$. In other words, 
$$\trace(u)=|b_{\sigma(1)}|_T+ |b_{\sigma(1)}^{-1}b_{\sigma(2)}|_T+... + |b_{\sigma(r-1)}^{-1}b_{\sigma(r)}|_T +|b_{\sigma(r)}^{-1}g|_T.$$
Moreover, in the wreath product we have that $$u=(b_{\sigma(1)}\circ a_{\sigma(1)}) \cdots (b_{\sigma(r)}\circ a_{\sigma(r)})g$$ $$=b_{\sigma(1)}a_{\sigma(1)}b_{\sigma(1)}^{-1}b_{\sigma(2)}a_{\sigma(2)} \cdots b_{\sigma(r-1)}^{-1}b_{\sigma(r)}a_{\sigma(r)}b_{\sigma(r)}^{-1}g.$$ This implies that $$|u|_{S,T} \leq |b_{\sigma(1)}|_T+|a_{\sigma(1)}|_S+|b_{\sigma(1)}^{-1}b_{\sigma(2)}|_T+ \cdots +|a_{\sigma(r)}|_S+|b_{\sigma(r)}^{-1}g|_T$$ $$= \sum_{j=1}^r|a_j|_S+\trace(u)=||w||_A+\trace(u).$$
\endproof

\section{Distortion in $\z_p \wre \z^k$}\label{ppq}

We begin with the following result, the proof of which exploits the formula of Theorem \ref{t6}.

\begin{prop}\label{t3}
The group $\z_2 \wre \z^2$ contains distorted subgroups.
\end{prop}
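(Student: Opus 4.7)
The plan is to construct a subgroup $H \le G = \z_2 \wre \z^2$ that is isomorphic to $G$ itself, via an embedding realizing the generator $a$ as a ``spread out'' element of the base $W$. Writing $W$ additively (so $W \cong \z_2[\z^2]$ with $\z^2$ acting by conjugation), I would set
\[
c = (1 + b_1)(1 + b_2)\cdot a = a + (b_1\circ a) + (b_2\circ a) + ((b_1b_2)\circ a) \in W,
\]
and let $H = \langle c, b_1, b_2 \rangle$. First I would check that the assignment $a \mapsto c$, $b_i \mapsto b_i$ extends to a group isomorphism $\pi\colon G \to H$. It is a well-defined homomorphism because the defining relations of $\z_2 \wre \z^2$ are satisfied by $c,b_1,b_2$: $c^2=1$ since $W$ has exponent $2$, $[b_1,b_2]=1$, and $c$ commutes with all its $\z^2$-conjugates because $W$ is abelian. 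Injectivity on $W$ reduces to injectivity of multiplication by $(1+b_1)(1+b_2)$ on $\z_2[\z^2]\cong \z_2[x^{\pm 1},y^{\pm 1}]$, which holds since this ring is an integral domain and $(1+b_1)(1+b_2)\ne 0$; combined with $\pi|_{\z^2}=\mathrm{id}$, this yields injectivity on all of $G$.

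Next, for each $N\ge 1$ I would consider the pair of elements
\[
v_N = \Big(\sum_{0\le i,j\le N-1} b_1^i b_2^j\Big)\cdot a \in G, \qquad u_N := \pi(v_N) = (1+b_1^N)(1+b_2^N)\cdot a \in H,
\]
where the second identity follows from the telescoping $(1+x)\sum_{i=0}^{N-1}x^i = 1+x^N$ in characteristic $2$. Applying Theorem \ref{t6} with $B=\z^2$: the support of $v_N$ is the full $N\times N$ grid, so $\|v_N\|_A=N^2$ and $\trace(v_N)\ge N^2-1$, giving $|v_N|_G \ge 2N^2 - O(N)$; whereas the support of $u_N$ consists only of the four corners $\{1,b_1^N,b_2^N,b_1^N b_2^N\}$, so $\|u_N\|_A=4$ and $\trace(u_N)\le 4N$ via a perimeter tour, giving $|u_N|_G\le 4N+O(1)$.

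Because $\pi$ sends the standard generating set $\{a,b_1,b_2\}$ of $G$ bijectively onto the generating set $\{c,b_1,b_2\}$ of $H$, word lengths transport: $|u_N|_H = |v_N|_G$. Combining the two estimates yields $\Delta_H^G(4N+O(1)) \ge 2N^2 - O(N)$, hence $\Delta_H^G(l)\succeq l^2$, and in particular $H$ is distorted in $G$. The main technical point is the injectivity of $\pi$, which pivots on the fact that $\z_2[\z^2]$ is a domain; the trace estimates are elementary geometric arguments (a snake-path realizing the upper bound for $v_N$ and the perimeter tour for $u_N$) that follow directly from the formula in Theorem \ref{t6}.
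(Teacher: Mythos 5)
Your proposal is correct and takes essentially the same approach as the paper: the paper sets $H=\langle b,c,(1+x)a\rangle$ and measures the grid element $f_l(x)f_l(y)w$, which telescopes in characteristic $2$ to an element supported on two sides of an $l\times l$ rectangle, whereas you take $w=(1+x)(1+y)a$ so the image collapses to the four corners; both arguments rest on the embedding of $\z_2\wre\z^2$ onto the subgroup and on the length formula of Theorem \ref{t6}, yielding the same quadratic lower bound. The only difference is that you verify the isomorphism explicitly via the domain $\z_2[x^{\pm1},y^{\pm1}]$, a step the paper delegates to Lemma \ref{ss}.
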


This is interesting in contrast to the case of $\z_2 \wre \z$ which has no effects of subgroup distortion. The essence in the difference comes from the fact that the Cayley graph of $\z$ is one-dimensional, and that of $\z^2$ is asymptotically two-dimensional, which gives us more room to create distortion using Theorem \ref{t6}.

We will use the following notation in the case of $G=\z_2 \wre \z^2$: $a$ generates the passive group of order $2$ while $b$ and $c$ generate the active group $\z^2.$ 

The canonical form of Equation $(\ref{r8})$ will be denoted by $$((g_1+ \cdots +g_k) a)g$$ for $g_1, \dots, g_k$ distinct elements of $\z^2$ and $g \in \z^2$. We may do this because any nontrivial element of $\z_2$ is just equal to the generator $a$. 
%The proof of the following lemma is similar to the proof of Lemma \ref{xxx}.

\begin{lemma}\label{ss}
 Let $H \leq G$ be generated by a nontrivial element $w \in W$ as well as the generators $b,c$ of $\z^2$. Then $H \cong G$.
\end{lemma}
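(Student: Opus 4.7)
The plan is to realize $H$ as a semidirect product $V \lambda \z^2$ where $V = H \cap W$, and then to identify $V$ as a $\z^2$-module with $W$ itself, so that the wreath product structures match. First, since $b, c \in H$ project onto generators of $\z^2 = G/W$, the natural projection $G \to \z^2$ restricted to $H$ is surjective, splits via $\langle b,c\rangle \hookrightarrow H$, and has kernel $V := H \cap W$. Hence $H = V \lambda \z^2$ as a semidirect product, with the $\z^2$-action on $V$ inherited from the wreath product action on $W$. Because $W$ is abelian of exponent $2$ (so every element is its own inverse), rewriting an arbitrary word in $w, b, c$ as $v \cdot g$ with $v \in W$ and $g \in \z^2$ shows that $V$ is precisely the $\z^2$-submodule of $W$ generated by $w$.

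Next I would view $W$ as a module over the group ring $R = \z_2[\z^2] = \z_2[x^{\pm 1}, y^{\pm 1}]$, with $x, y$ corresponding to $b, c$. Under the identification $a \leftrightarrow 1$, the module $W$ becomes a free $R$-module of rank $1$, the element $w$ corresponds to some nonzero Laurent polynomial $p(x,y) \in R$, and $V$ corresponds to the principal ideal $(p) \subseteq R$ generated by $p$.

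The key observation, and the only substantive point in the argument, is that $R = \z_2[x^{\pm 1}, y^{\pm 1}]$ is an integral domain: it is the localization of the ordinary polynomial ring $\z_2[x,y]$ at the multiplicative set $\{x^i y^j : i,j \ge 0\}$, so it inherits the no-zero-divisor property. Consequently, multiplication by $p$ defines an $R$-module isomorphism $R \xrightarrow{\sim} (p)$, $r \mapsto pr$, giving an isomorphism of $\z^2$-modules $W \cong V$.

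Finally, combining the semidirect product decomposition $H = V \lambda \z^2$ with the $\z^2$-equivariant isomorphism $V \cong W$, I would conclude $H \cong W \lambda \z^2 = G$. The hard part is really just locating the right algebraic input; once one recognizes $V$ as a principal ideal in a Laurent polynomial ring over a field and uses the absence of zero divisors, the conclusion is essentially automatic.
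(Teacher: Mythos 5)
Your proof is correct and takes essentially the same route as the paper, which only sketches this lemma through the module discussion that follows it and proves the analogous Lemma \ref{xxx} identically: regard $W$ as a free rank-one module over the domain $\z_2[x^{\pm 1},y^{\pm 1}]$, note that multiplication by the nonzero Laurent polynomial corresponding to $w$ is an injective, $\z^2$-equivariant module map sending $W$ onto $H\cap W$, and conclude that $a\mapsto w$, $b\mapsto b$, $c\mapsto c$ extends to an isomorphism of $G$ onto $H$. There are no gaps; your exponent-two remark is unnecessary (the submodule generated by $w$ already contains inverses) but harmless.
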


We know that $W=\displaystyle\bigoplus_{g \in \z^2}\langle g\circ a\rangle$ is a free module over $\z_2[\z^2]$. Therefore, we may think of $W$ as being the Laurent polynomial ring in two variables, say, $x$ for $b$ and $y$ for $c$. We can use the module language to express any element as $w=f(x,y)a=(x^{i_1}y^{j_1}+ \cdots+x^{i_k}y^{j_k})a$, where for $p \ne q$ we have that $x^{i_p}y^{j_p} \ne x^{i_q}y^{j_q}$. This corresponds to the canonical form $w=(g_1+ \cdots +g_k)a$ where $g_p=b^{i_p}c^{j_p}$ for $p=1, \dots, k$. 

We now have all the required facts to prove Proposition \ref{t3}.

\begin{proof} of Proposition \ref{t3}:
Let $G=\z_2 \wre \z^2 = \gp \langle a, b, c \rangle$ and $H = \gp \langle b, c, w \rangle$ where $w=[a,b]=(1+x)a$. By Lemma \ref{ss} we have that $H \cong G$. Let $$f_l(x)=\displaystyle\sum_{i=0}^{l-1}x^i \textrm{ and } g_l(x)=(1+x)f_l(x).$$ 
The element $f_l(x)f_l(y)w \in H$ is in canonical form, when written in the additive group notation as $\sum_{i,j=0}^{l-1} b^ic^j\circ w.$

By Theorem \ref{t6}, we have that its length in $H$ is at least $l^2+l^2$ since
the support of it has cardinality $l^2$, and the length of arbitrary loop going through $l^2$ different vertices is at least $l^2$. 

\begin{center}
\small{Figure $1$: The $l^2$ vertices (left) and the rectangle with perimeter $2l+2(l-1)$ (right)}
\includegraphics[scale=.58]{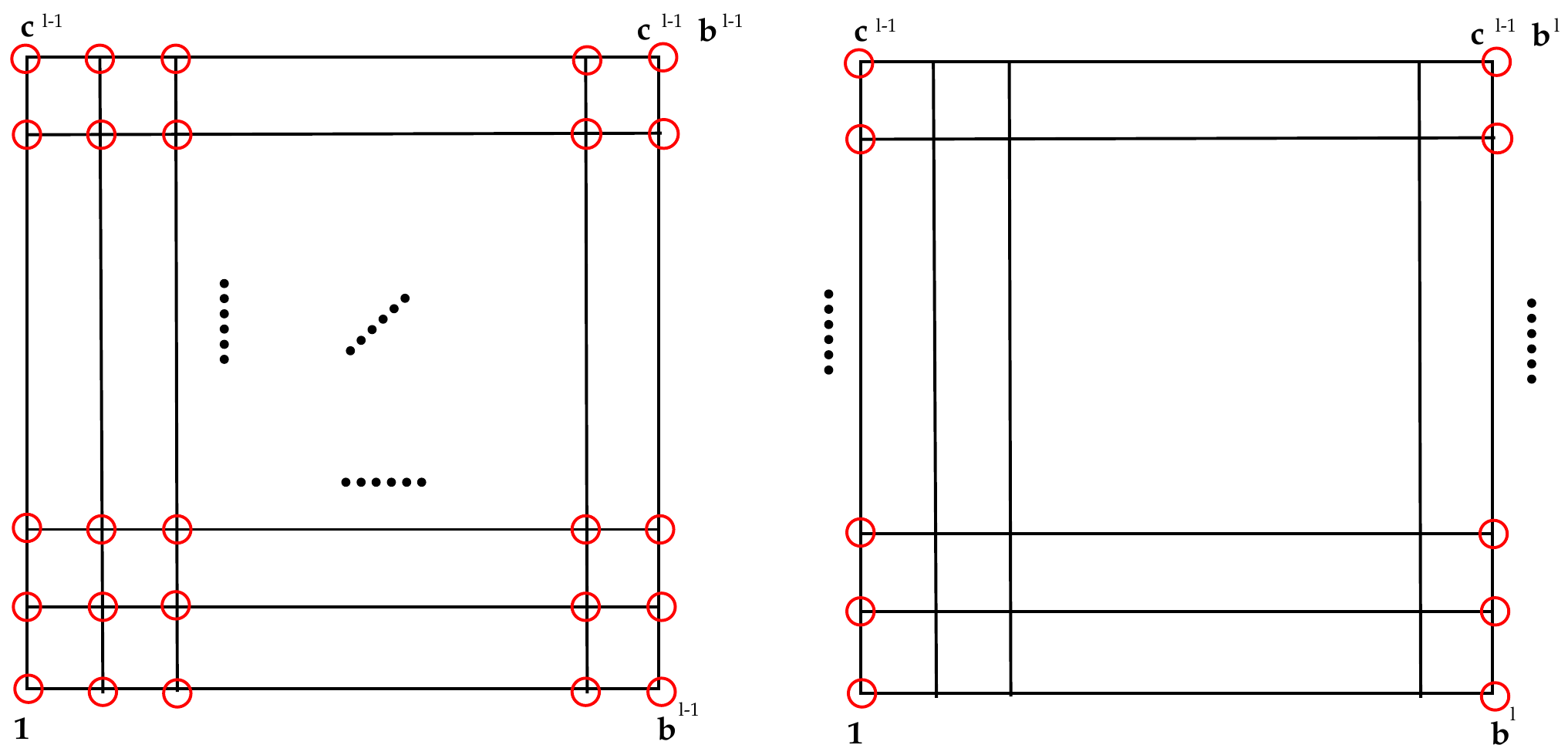}\\
\end{center}

Now we compute the length of $f_l(x)f_l(y)w$ in $G$. We have that $$f_l(x)f_l(y)w=(1+x)f_l(x)f_l(y)a=g_l(x)f_l(y)a=\bigg[\sum_{i=0}^{l-1} (y^i+y^ix^{l})\bigg]a.$$ Theorem \ref{t6} shows that $|f_l(x)f_l(y)w|_G=2l+2(l-1)+2l$. This is because the shortest path in $\cay(\z^2)$ starting at $1$, passing through $1, c, \dots, c^{l-1}$ and $b^l, cb^l, \dots, c^{l-1}b^l$ and ending at $1$ is given by traversing the perimeter of the rectangle, and so gives the length of $2(l-1)+2l$. 

Therefore the subgroup $H$ is at least quadratically distorted. 
\end{proof}

\begin{rem}
The subgroup $H$ is not normal in $G$  because the element $aca^{-1}$ is not in $H$. 
\end{rem}

The proof of Proposition \ref{t3} can be generalized as follows. Consider the group $G=\z_n \wre \z^k=\gp \langle a, b_1, \dots, b_k\rangle$ for $n\ge 2$ and $k>1$. Then the subgroup $H = \gp \langle w, b_1, \dots, b_k\rangle$ where $w=(1-x_1) \cdots (1-x_{k-1})a=[...[a,b_1],b_2],...b_{k-1}]$ has distortion at least $l^{k}$. This is a restatement of Proposition \ref{pqp}.

By (the analogue of) Lemma \ref{ss} we have that $H \cong G$ and so we can compute lengths using Theorem \ref{t6}. Consider the element $f_l(x_1) \cdots f_l(x_k)w$ in $H$. Then it has length in $H$ at least equal to $l^k+l^k$ because the path in $\cay(\z^k)$ arising from Theorem \ref{t6} would need to pass through at least $l^k$ vertices: ${b_1}^{\alpha_1} \cdots {b_k}^{\alpha_{k}}$ for $\alpha_i \in \{0, \dots, l-1\}, i=1, \dots, k$. In the group $G$, $$f_l(x_1) \cdots f_l(x_k)w=g_l(x_1) \cdots g_l(x_{k-1})f_l(x_k)a.$$ This has linear length, which follows because the vertices of the support are placed along the edges of a 
$k$-dimensional parallelotope, such that the length of any edge of the parallelotope is at most $l$.

\section{Estimating Word Length}
Although the notion of equivalence has only been defined for functions from $\mathbb{N}$ to $\mathbb{N}$, we would like to define a notion of equivalence for functions on a finitely generated group. We say that two functions $f, g: G \rightarrow \mathbb{N}$ are equivalent if there exists $C>0$ such that for any $x \in G$ we have $$\frac{1}{C}f(x)-C \leq g(x) \leq Cf(x)+C.$$ If there is a function $f: G \rightarrow \mathbb{N}$ such that $f \approx | \cdot |_G$, then for any subgroup $H$ of $G$, $\Delta_H^G(l) \approx \max \{ |x|_H : x \in H, f(x) \leq l\}$.

We need to establish a looser way of estimating lengths in $\z \wre \z$,  than the formula introduced in Lemma \ref{t5}. 
Recall that this group has standard generators $a\in W$ (passive)
and $b$ (active). 

Here we call {\it exemplary} any sugroup $H=\langle b,w \rangle \leq \z \wre \z$ where
$w \in W \backslash 1.$ We have $w=h(x)a,$ where $h(x)= \sum_{j=0}^{t}d_{j}x^j,$ and $d_0\ne 0.$
This follows without loss of generality by conjugating of $w$ by a power of $b$. Thus
we associate a polynomial $h(x)\in\z[x]$ with any exemplary subgroup $H.$

 \begin{lemma}\label{xxx} The mapping
$a\mapsto w, b\mapsto b$ extends to a monomorphism of the wreath product $\z \wre \z$  onto the exemplary subgroup
 %Then $\gp \langle w, x \rangle \cong \z \wre \z$ under the group monomorphism $a\mapsto w,$ %$b\mapsto b$.
 \end{lemma}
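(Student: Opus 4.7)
My plan is to use the $\z[x, x^{-1}]$-module structure on $W \leq \z \wre \z$ (Remark \ref{yyy}) and reduce injectivity to the fact that the Laurent polynomial ring $\z[x, x^{-1}]$ is an integral domain.

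First, I would verify that the assignment $a \mapsto w$, $b \mapsto b$ extends to a homomorphism $\phi: \z \wre \z \to H$. Presenting $\z \wre \z$ as the semidirect product $W \rtimes \langle b \rangle$, the only relations (beyond the infinite-cyclic nature of the two generators) are the commutations $[b^i a b^{-i},\, b^j a b^{-j}] = 1$ for all $i, j \in \z$. So it suffices to check that $[b^i w b^{-i},\, b^j w b^{-j}] = 1$ in $H$, and this is immediate since all these elements lie in the abelian subgroup $W$. Surjectivity of $\phi$ onto $H = \langle b, w \rangle$ is automatic.

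For injectivity, take any $u \in \z \wre \z$ with $\phi(u) = 1$, and write it in canonical form $u = f(x) a \cdot b^t$ with $f(x) \in \z[x, x^{-1}]$ and $t \in \z$. Composing $\phi$ with the canonical quotient $\z \wre \z \to \langle b \rangle$ forces $b^t = 1$ in $\langle b \rangle$, so $t = 0$. Next, using the module description, $\phi(f(x) a) = f(x) \cdot w = f(x) h(x) a$ in the $\z[x, x^{-1}]$-module $W$. Since $h(x) \neq 0$ (its constant term $d_0$ is nonzero) and $\z[x, x^{-1}]$ is an integral domain, the equation $f(x) h(x) = 0$ forces $f(x) = 0$, hence $u = 1$.

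The only subtle point is the bookkeeping that identifies $\phi$'s action on $W$ with multiplication in $\z[x, x^{-1}]$, i.e., verifying that $\phi$ is $\z[x, x^{-1}]$-linear on $W$ (where $x$ acts by conjugation by $b$ on both sides). Once that is spelled out, the entire proof collapses to the integral-domain property of $\z[x, x^{-1}]$, and I do not anticipate any genuine obstacle.
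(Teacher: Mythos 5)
Your proposal is correct and follows essentially the same route as the paper: the paper's one-line proof also rests on $W$ being a free rank-one module over the domain $\z[\langle b\rangle]$, so that multiplication by $h(x)\ne 0$ is an injective module homomorphism. You merely make explicit the routine steps (checking the wreath-product relations to extend the map, and projecting to $\langle b\rangle$ before invoking the integral-domain property), which the paper leaves implicit.
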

 
\proof This follows because in this case $W$ is a free  module with one generator $a$ over the
domain $\z[\langle b\rangle],$ $w=h(x)a,$ and the mapping
$u\to h(x)u$ ($u\in W$) is an injective module homomorphism. \endproof

%Since the group ring $\z [\langle b \rangle]$ is a domain, the mapping
%$a\mapsto w, b\mapsto b$ extends to a monomorphism of the wreath product onto the exemplary %subgroup. We have $w=h(x)a,$ where $h(x)= \sum_{j=0}^{t}d_{j}x^j,$ and $d_0\ne 0.$
%This follows without loss of generality by conjugating of $w$ by a power of $b$. Thus
%we associate a polynomial $h(x)\in\z[x]$ with any exemplary subgroup $H.$

%\begin{lemma}\label{r3}
%Let $\z^r \wre \z$ have standard generating set $\{a_1, \dots, a_r, b\}$. Let $H \leq \z^r %\wre \z$ be a special subgroup with generators $b, w_1, \dots, w_k$.Then $H$ is isomorphic to %$\z^k \wre \z$.
%\end{lemma}

%This follows from what has been established already. Each $w_i$ generates a free cyclic %$\z[\langle b \rangle]$ submodule. By hypothesis, all $w_i$'s are in different direct %summands, so they generate a free $\z[\langle b \rangle]$ module of rank $k$.

%We will only consider special subgroups of $\z^r \wre \z$. Such a 
%subgroup $H$ has generators $b, w_1, \dots, w_k$ where $w_i \in W$, and further, for each %$i=1, \dots, k$ we have that 
%\begin{equation}\label{aa}
%w=h(x)a 
%\textrm{ where } r_i(x)=\sum_{j=0}^{t_i}d_{i,j}x^j.
%\end{equation}
%This follows without loss of generality by conjugating by a power of $b$. 

Then for any element $g \in H$, we may write $g= f(x)w=(f(x)h(x)a)b^n,$ where $f(x)=\sum_{q=s}^{s+p}z_qx^q$ is
a Laurent polynomial. Denote by $S(f)$ the sum $\sum_{q=s}^{s+p}|z_q|.$
%\begin{equation}\label{aaa}
%g=\bigg(\sum_{i=1}^k f_i(x)w_i \bigg)b^n \textrm{ where } f_i(x)=\sum_{q=s_i}^{s_i+p_i}z_qx^q
%\end{equation}
%where $q=q(i)$ for all $i=1, \dots, k$ and for some $s_i, z_q \in \z, p_i \geq 0$. In the %generators of $\z^r \wre \z$ we may also write this element as
% \begin{equation}\label{b}
%\bigg(\sum_{i=1}^{k}g_i(x)a_i\bigg)b^n \textrm{ where } %g_i(x)=r_i(x)f_i(x)=\sum_{j=s_i}^{s_i+p_i+t_i}y_{i,j}x^j,
%\end{equation}
%for some $y_{i,j} \in \z$.
For this element, consider the norms 
$$e(g) = S(fh) \textrm{ and } e_H(g) = S(f)$$

%$$e(g)=\sum_{i=1}^k\sum_{j=s_i}^{s_i+p_i+t_i}|y_{i,j}| \textrm{ and } %e_H(g)=\sum_{i=1}^k\sum_{q=s_i}^{s_i+p_i}|z_q|.$$ 

Letting $\iota(g)=\max\{t+s+p,0\}, \varepsilon(g)=\min\{s,0\}, \iota_H(g)=\max\{s+p,0\},$ 
$\varepsilon_H(g)=\min\{s,0\},$ we define $u_H(g)=\iota_H(g)-\varepsilon_H(g) \textrm{ and } u(g)=\iota(g)-\varepsilon(g).$

Consider the function $$\delta(l)= \max \{e_H(g): g \in H \cap W, e(g) \leq l \textrm{ and } u(g) \leq l\}.$$ The following Lemma shows that we may simplify computations of word length in exemplary subgroups.

\begin{lemma}\label{r2}
Let $H=\langle b, w \rangle \leq \z \wre \z$ be an exemplary subgroup.  Then we have that $$\Delta_H^{\z \wre \z}(l) \approx \delta(l).$$
\end{lemma}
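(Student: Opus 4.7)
The plan is to extract from Lemma~\ref{t5} explicit formulas for $|v|_G$ and $|v|_H$ when $v$ is passive, and then translate the definition of $\delta$ into a statement about these formulas. The key observation is that specialising Lemma~\ref{t5} to $v\in W$ (active exponent zero) yields $|v|_G = e(v) + 2u(v)$, and, applying the isomorphism $H\cong\z\wre\z$ of Lemma~\ref{xxx}, the companion identity $|v|_H = e_H(v) + 2u_H(v)$ for $v\in H\cap W$.

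For the inequality $\delta\preceq\Delta_H^G$, I would take any $g\in H\cap W$ with $e(g),u(g)\le l$; the length formulas give $|g|_G\le 3l$ and $e_H(g)\le |g|_H\le\Delta_H^G(3l)$, and maximising over admissible $g$ produces $\delta(l)\le\Delta_H^G(3l)$.

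For the converse $\Delta_H^G\preceq\delta$, I would write an arbitrary $g\in H$ as $vb^n$ with $v\in H\cap W$, so that $|n|\le|g|_G$ by projection to the active copy of $\z$ and $|v|_G\le 2|g|_G$ by the triangle inequality. Setting $l:=|g|_G$, the length formula for $|v|_G$ forces both $e(v)$ and $u(v)$ to be bounded by a uniform multiple of $l$, whence $e_H(v)\le\delta(Cl)$ by definition of $\delta$. A direct comparison of $\iota,\varepsilon$ with $\iota_H,\varepsilon_H$ (leveraging $d_0\ne 0$, so that $f$ and $fh$ share their lowest-degree monomial) will give $u_H(v)\le u(v)+\deg h$, a shift by a constant depending only on $H$. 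Combining, $|g|_H\le |v|_H+|n|\le\delta(Cl)+O(l)$; evaluating $\delta$ on the family $g=(1+x+\cdots+x^{l-1})w$ establishes $\delta(l)\succeq l$, which absorbs the linear error and finishes $\Delta_H^G\preceq\delta$.

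The hard part is not conceptual but bookkeeping: extracting from the minimum appearing in Lemma~\ref{t5} a uniform upper bound on $u(v)$ in terms of $|g|_G$ (there are several cases depending on the sign of $n$ relative to $\iota$ and $\varepsilon$), and verifying that the constant-sized shift between $u$ and $u_H$ is genuinely uniform in $g$. Once those two computations are in hand, the rest is a direct translation between the polynomial and group-theoretic pictures.
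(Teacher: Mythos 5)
Your proposal is correct and takes essentially the same route as the paper: both translate the length formula of Lemma \ref{t5} into comparisons among $e(g)$, $e_H(g)$, $u(g)$, $u_H(g)$ and $|n|$ (using $\varepsilon_H=\varepsilon$, $\iota_H\le\iota$ since $d_0\ne 0$), yielding $\delta(l)\preceq\Delta_H^{\z\wre\z}(l)$ and $\Delta_H^{\z\wre\z}(l)\le\delta(Cl)+O(l)$. Your explicit check that $\delta(l)\succeq l$ to absorb the additive linear error is a detail the paper leaves implicit (relying on distortion of infinite subgroups being at least linear), not a different method.
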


\begin{proof}
Recall that by Lemma \ref{t5}, we have the following formulas. For $g \in H$ with the notation established above, we have that:
$$|g|_H=e_H(g)+\min\{-2\varepsilon_H(g)+\iota_H(g)+|n-\iota_H(g)|, 2\iota_H(g)-\varepsilon_H(g)+|n-\varepsilon_H(g)|\}$$ and 
$$|g|_{\z \wre \z}=e(g)+\min\{-2\varepsilon(g)+\iota(g)+|n-\iota(g)|, 2\iota(g)-\varepsilon(g)+|n-\varepsilon(g)|\}.$$

The following inequality follows from the definitions: 
\begin{equation}\label{bb}
\max \{e(g), u(g), |n| \} \leq |g|_{\z^r \wre \z}.
\end{equation}

Similarly, we have that
\begin{equation}\label{bbb}
|g|_H \leq e_H(g)+2u_H(g)+|n| \textrm{ and } |g|_{\z \wre \z} \leq e(g)+2u(g)+|n|.
\end{equation}

Observe that for $g \in H \cap W$ we have that 
\begin{equation}\label{c}
|g|_H \geq \max\{e_H(g), u_H(g)\}.
\end{equation}

Observe that 
\begin{equation}\label{cc}
\max \{u_H(g) : g \in H, u(g) \leq l\} \leq l.
\end{equation}   
   
Thus, $$\Delta_H^{\z\wre \z}(l) \leq \max \{e_H(g):g \in H, e(g) \leq l, u(g) \leq l\}$$ $$+ \max \{2u_H(g) : g \in H, u(g) \leq l\}+ \max \{|n| :g \in H, |n| \leq l\} \leq \delta(l)+3l.$$ 
The first inequality follows from Equation (\ref{bb}), the second from Equation (\ref{bbb}).

On the other hand, we have that $$\Delta_H^{\z \wre \z}(l) \geq \max\{e_H(g): g \in H \cap W, e(g) \leq l/4, u(g) \leq l/4\}$$ $$-\max\{u_H(g): g \in H \cap W, e(g) \leq l/4, u(g) \leq l/4\} \geq \delta(l/4)-l/4.$$

The first inequality follows from Equation (\ref{bbb}), the second from Equation (\ref{c}), and the third from Equation (\ref{cc}).

Thus $\Delta_H^{\z \wre \z}(l)$ and $\delta(l)$ are equivalent.
\end{proof}

\section{Distortion of Polynomials}

In order to understand distortion in exemplary subgroups of $\z \wre \z$, we will introduce the notion of  distortion of a polynomial. 

\begin{defn}\label{pd}
Let $R$ be a subring of a field with a real valuation, and consider the polynomial ring $R[x]$. We will define the norm function $S: R[x] \rightarrow \mathbb{R}^+$ which takes any $f(x)=\sum_{i=0}^na_ix^i \in R[x]$ to $S(f)=\sum_{i=0}^n|a_i|.$ For any $h \in R[x]$ and $c>0$, we define the distortion of the polynomial $h$ from $\mathbb{N}$ to $\mathbb{N}$ by 
\begin{equation}\label{3811}
\Delta_{h,c}(l) = \sup\{ S(f) : \deg(f) \leq cl, \textrm{ and } S(hf) \leq cl \}.
\end{equation}
\end{defn}

\begin{rem}
Taking into account the inequality $S(hf) \le cl$, one can easily find some explicit upper boundes $C_i=C_i(h,c,l)$ for the modules of the coefficient at $x^i$ of $f(x)$ in Formula \eqref{pd}, starting with the lowest coefficients. Therefore the supremum in Equation \eqref{3811} is finite. Furthermore, if $R = \z,  \mathbb{R}$ or $\mathbb{C}$ then the supremum is taken over a compact set of polynomials of bounded degree with bounded coefficients, and since $S$ is a continuous function, one may replace $\sup$ by $\max$ in Definition \ref{pd}.
\end{rem}

Note that the distortion does not depend on the constant $c$, up to equivalence, and so we will consider $\Delta_h(l)$. 
%We will eventually show that the distortion of a exemplary subgroup is equivalent to the %distortion of the polynomial $h(x)$ associated to $W$. 

%\subsection{Connections Between Subgroup and Polynomial Distortion}

The following fact makes concrete our motivation for studying distortion of polynomials.

\begin{lemma}\label{hpu}
Let $H$ be an exemplary subgroup  
 $\langle b, w \rangle \leq \z \wre \z,$ and $w=h(x)a$ for $h=d_0+\cdots+d_tx^t  \in \z[x]$. Then $$\Delta_h(l) \approx \Delta_H^{\z \wre \z}(l).$$
\end{lemma}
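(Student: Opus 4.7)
The plan is to invoke Lemma \ref{r2}, which reduces the claim to proving the equivalence $\delta(l) \approx \Delta_h(l)$ directly from the definitions. The key trick throughout is that for a Laurent polynomial $f(x)$, both $S(f)$ and $S(fh)$ are invariant under multiplication by any power of $x$, so one can freely shift between $\z[x]$ and the Laurent setting.

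For the inequality $\delta(l) \leq \Delta_h(l)$ (up to constants), I take $g = f(x)h(x)a \in H \cap W$ realizing the maximum in $\delta(l)$, so that $S(fh) = e(g) \leq l$ and $u(g) \leq l$, where $f(x) = \sum_{q=s}^{s+p} z_q x^q$. Replacing $f$ by $x^{-s}f$ I may assume $f \in \z[x]$, with $\deg f = p + \max\{-s,0\}-\max\{-s,0\} = p$ (and $S(f), S(fh)$ unchanged). A short case analysis on the signs of $s$ and of $t+s+p$, using $\iota(g) = \max\{t+s+p,0\}$ and $\varepsilon(g) = \min\{s,0\}$ together with $t, p \geq 0$ and $d_0 \neq 0$, shows that $u(g) = \iota(g) - \varepsilon(g) \leq l$ forces $p \leq l$. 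Then Definition \ref{pd} applied with $c=1$ yields $e_H(g) = S(f) \leq \Delta_{h,1}(l)$, and taking the maximum over $g$ gives $\delta(l) \leq \Delta_h(l)$.

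For the reverse inequality $\Delta_h(l) \preceq \delta(l)$, I take $f \in \z[x]$ with $\deg f \leq cl$ and $S(fh) \leq cl$ realizing the supremum in $\Delta_{h,c}(l)$, and form $g = f(x)h(x)a = f(x)\cdot w \in H \cap W$. Then $e(g) = S(fh) \leq cl$, while the support of $fh$ is contained in $[0, \deg f + t] \subset [0, cl + t]$, so $u(g) \leq cl + t \leq Cl$ for a constant $C$ depending only on $c$ and $h$ (for $l$ large). Thus $S(f) = e_H(g) \leq \delta(Cl)$, and taking the supremum yields $\Delta_h(l) \preceq \delta(l)$.

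The only real technical point is the case analysis in the first direction showing that $u(g) \leq l$ controls $\deg f$ even when the support of $f$ (or of $fh$) crosses the origin or lies entirely on one side of it; this is where the definitions involving $\max\{\cdot, 0\}$ and $\min\{\cdot, 0\}$ must be unpacked carefully. Once that is done, both inequalities are immediate applications of the definitions of $\delta$, $\Delta_{h,c}$, and the norms $e, e_H, u$.
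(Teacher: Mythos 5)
Your proof is correct and follows essentially the same route as the paper: reduce to $\delta(l)$ via Lemma \ref{r2}, pass from Laurent to ordinary polynomials by a shift (the paper does this by conjugating by a power of $b$), and compare $e, e_H, u$ with $S(fh), S(f), \deg f$, your case analysis showing $\deg \leq u(g)$ being exactly the check the paper summarizes as ``easy to verify.'' The only cosmetic difference is in the direction $\Delta_h(l) \preceq \Delta_H^{\z \wre \z}(l)$, where the paper bounds $|f_l(x)w|_{\z\wre\z}$ directly by the length formula of Lemma \ref{t5} rather than routing through $\delta$ again; the two arguments are equivalent.
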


%We break the proof of Lemma \ref{hpu} into two smaller lemmas, each demonstrating one %inequality.

%\begin{lemma}
%With all notation as in Lemma \ref{r2}, we have that  $\Delta_H^{\z \wre \z}(l) \preceq %\Delta_h(l).$
%\end{lemma}

\begin{proof}
By Lemma \ref{r2}, we have that $\Delta_H^{\z \wre \z}(l) \approx \delta(l)=\max\{e_H(g):g \in H \cap W, e(g) \leq l, u(g) \leq l\}.$ Let $g_l=f_l(x)w \in H \cap W$ be so that $\delta(l)=e_H(g_l)$. 
%Write $f_l(x)=\sum_{q=s}^{s+p}z_qx^q$. 
There exists $n \in \z$ so that $\bar g_l=b^ng_lb^{-n} \in H$ and $\bar g_l=\bar f_l(x)w$ where $\bar f_l(x)$ is a regular polynomial. It is easy to check that $e_H(g_l)=e_H(\bar g_l), e(g_l)=e(\bar g_l)$ and $u(\bar g_l) \leq u(g_l).$ Now observe that $\deg(\bar f_l) \leq u(\bar g_l) \leq u(g_l) \leq l$ and $S(h\bar f_l) =e(\bar g_l)=e(g_l)\leq l$. Therefore, $\Delta_h(l) \succeq S(\bar f_l) = e_H(\bar g_l) = e_H(g_l) \approx \Delta_H^{\z \wre \z}(l).$  

On the other hand,  let us choose any polynomials $f_l(x)$ such that $\deg f_l\le l,$
$S(hf_l)\le l,$ and $\Delta_h(l)=\Delta_{h,1}(l)= S(f_l).$  Then by Lemma \ref{t5},
$|f_l(x)w|_H \ge S(f) = \Delta_h(l)$ while $$|f_l(x)w|_{\z\wre\z}= |f_l(x)h(x)a|_{\z\wre\z} 
\le S(hf) + 2l \le 3l.$$  It follows that $\Delta_H^{\z \wre \z}(l) \succeq \Delta_h(l),$
and the lemma is proved.
\end{proof}

%\begin{lemma}\label{r6} 
%With all notation as in Lemma \ref{hpu}, we have that $\Delta_H^{\z \wre \z}(l) \succeq %\Delta_h(l)$
%\end{lemma}

%\begin{proof}
%By Lemma \ref{xxx} we have that $H \cong \z \wre \z$ under the isomorphism $b \mapsto b, w %\mapsto a$. We fix the notation that $w_i=b^iwb^{-i}$. Let $\Delta_h(l)=S(f)$ where $f(x) = %\displaystyle\sum_{q=0}^{l}z_qx^q$. We have that in the subgroup $H$, $f(x)h= %\displaystyle\sum_{q=0}^l z_qw_q$, so by Lemma \ref{t5} we have that $$|f(x)h|_H \geq %\displaystyle\sum_{q=0}^{l}|z_q| =S(f)=\Delta_h(l).$$ On the other hand, in $\z \wre %\z=\langle a,b \rangle$ we have that $f(x)w=f(x)h(x)a.$ Let $f(x)h(x)= %\displaystyle\sum_{i=0}^{l+t}y_ix^{i}.$ Then $$f(x)h(x)a=\sum_{j=0}^{t+l}y_ja_j.$$ Therefore %by Lemma \ref{t5}, $$|f_l(x)w|_{\z \wre \z} = %\displaystyle\sum_{j=0}^{t+l}|y_j|+2(t+l)=S(fh)+2(l+t) \leq 3l+t.$$ Therefore, the distortion %of $H$ in $\z \wre \z$ is at least $\Delta_h(l)$.
%\end{proof}

\section{Lower Bounds on Polynomial Distortion}

%For the rest of this section and the next, although we are motivated by studying groups, we %are only discussing polynomials. 
Given any polynomial $h=\sum_{j=0}^td_jx^j \in \z[x], d_0,d_t \ne 0$ with complex, real or integer coefficients, we are able to compute the equivalence class of its distortion function. 

\begin{lemma}\label{hnc}
The distortion $\Delta_h(l)$ of $h$ with respect to the ring of polynomials over $\z, \mathbb{R},$ or $\mathbb{C}$ is bounded from below by $l^{\kappa+1}$, up to equivalence, where $c$ is a complex root of $h$ of multiplicity $\kappa$ and modulus one.
\end{lemma}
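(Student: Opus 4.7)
The plan is to exhibit, for each $N \in \mathbb{N}$, a polynomial $f_N$ in the relevant ring with $\deg f_N = O(N)$ and $S(h f_N) = O(N)$, yet $S(f_N) \ge c_0 N^{\kappa+1}$ for some constant $c_0 > 0$ independent of $N$; taking $l \asymp N$ then yields $\Delta_h(l) \succeq l^{\kappa+1}$ directly from the definition of $\Delta_h$. The construction begins in the complex setting. I set
\begin{equation*}
g(x) = \sum_{i=0}^{N-1} c^{-i-1} x^i \in \mathbb{C}[x],
\end{equation*}
a truncated geometric series whose telescoping yields $(x-c) g(x) = c^{-N} x^N - 1$, a binomial of $S$-norm $2$. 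Writing $h = (x-c)^\kappa q$ with $q(c) \ne 0$ and taking $f = g^{\kappa+1}$, submultiplicativity of $S$ combined with the identity $hf = q \cdot g \cdot (c^{-N} x^N - 1)^\kappa$ gives $S(hf) \le 2^\kappa S(q) \cdot N$, while $\deg f \le (\kappa+1)N$. The key combinatorial observation is that the coefficient of $x^k$ in $g^{\kappa+1}$ equals $c^{-k-\kappa-1}$ times the positive integer count of tuples $(i_1,\ldots,i_{\kappa+1}) \in [0,N-1]^{\kappa+1}$ with $\sum i_j = k$; the uniform modulus-one phases prevent cancellation, so $S(g^{\kappa+1}) = N^{\kappa+1}$, disposing of the case $R = \mathbb{C}$.

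Over $\z[x]$, Kronecker's theorem forces $c$ to be a primitive root of unity of some order $q$. Galois invariance then promotes $\Phi_q \mid h$ to $\Phi_q^\kappa \mid h$ in $\z[x]$, so $h = \Phi_q^\kappa q''$ with $q''(c) \ne 0$. I would replace $g$ with the integer analogue
\begin{equation*}
g_N(x) = \Psi(x)\bigl(1 + x^q + x^{2q} + \cdots + x^{(N-1)q}\bigr), \qquad \Psi(x) := (x^q - 1)/\Phi_q(x) \in \z[x],
\end{equation*}
so that $\Phi_q g_N = x^{qN} - 1$ mimics the complex telescoping. With $f = g_N^{\kappa+1}$ one obtains $hf = q'' \cdot g_N \cdot (x^{qN}-1)^\kappa$; submultiplicativity combined with $S(g_N) \le N \cdot S(\Psi)$ yields $S(hf) = O(N)$, and $\deg f = O(N)$. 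Since the clean combinatorial lower bound is no longer directly available, I use instead the elementary inequality $S(p) \ge |p(c)|$ (valid because $|c|=1$): evaluation at $x = c$ gives $|f_N(c)| = |\Psi(c)|^{\kappa+1} N^{\kappa+1}$, and $\Psi(c) = \prod_{d \mid q,\, d \ne q} \Phi_d(c) \ne 0$ because $c$ is a primitive $q$-th root of unity and fails to be a primitive $d$-th root for any proper divisor $d$.

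For $R = \mathbb{R}$, if $c \in \mathbb{R}$ then $c = \pm 1$ and the complex polynomial $g$ already has real coefficients; otherwise $\bar c$ is also a root of $h$ with multiplicity $\kappa$, and I would use the symmetrized choice $f = (g_c g_{\bar c})^{\kappa+1}$, which lies in $\mathbb{R}[x]$ because $g_c(x) g_{\bar c}(x) = |g_c(x)|^2$ on the real axis. Factoring $h = ((x-c)(x-\bar c))^\kappa \tilde q$ and invoking the telescoping relation for both $g_c$ and $g_{\bar c}$ yields $S(hf) = O(N)$ and $\deg f = O(N)$ as before. The lower bound again comes from $|f(c)|$, which evaluates to $N^{\kappa+1} |c^{2N}-1|^{\kappa+1}/|c^2-1|^{\kappa+1}$; this is of order $N^{\kappa+1}$ only along a subsequence $\{N_i\}$ on which $|c^{2N_i}-1|$ stays bounded below. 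Producing such a subsequence with bounded successive ratios is the main obstacle of the proof: it is handled by equidistribution of $\{c^{2N}\}$ on the unit circle when $c$ is not a root of unity, and by periodicity of $N \mapsto c^{2N}$ when $c$ is a root of unity of order $\ge 3$. Applying Remark \ref{poa} then yields $\Delta_h(l) \succeq l^{\kappa+1}$ in all three ring settings.
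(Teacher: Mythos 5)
Your complex-case construction is correct and is essentially the paper's own: your $g$ equals $c^{-N}v_N$ where $v_N(x)=x^{N-1}+cx^{N-2}+\cdots+c^{N-1}$ is the polynomial used there, and your exact count $S(g^{\kappa+1})=N^{\kappa+1}$ is a clean variant of the paper's bound $S(v_N^{\kappa+1})\ge |v_N(c)|^{\kappa+1}=N^{\kappa+1}$. The genuine gap is in the case $R=\z$. Kronecker's theorem does not force a modulus-one root of an integer polynomial to be a root of unity: it applies only to an algebraic integer \emph{all} of whose conjugates lie on the unit circle, and neither hypothesis is available here. For example $h(x)=2x^2-3x+2$ has the unimodular roots $(3\pm i\sqrt{7})/4$, which are not roots of unity (they are not even algebraic integers), and monic examples exist as well: Salem polynomials, such as Lehmer's degree-ten polynomial $x^{10}+x^9-x^7-x^6-x^5-x^4-x^3+x+1$, have unimodular roots that are not roots of unity because other conjugates leave the circle. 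For such $h$ your cyclotomic surrogate $g_N$ does not exist, and your real polynomial $(g_cg_{\bar c})^{\kappa+1}$ is not an admissible $f$ in Definition \ref{pd} over $\z$. The paper closes exactly this case by a rounding step that your proposal is missing: choose $F_l\in\z[x]$ whose coefficients differ from those of $v_l^{\kappa+1}\bar v_l^{\kappa+1}$ by at most one; then $S(hF_l)\le S(hv_l^{\kappa+1}\bar v_l^{\kappa+1})+S(h)\cdot O(l)=O(l)$ while $S(F_l)\ge S(v_l^{\kappa+1}\bar v_l^{\kappa+1})-O(l)$, so the lower bound $\succeq l^{\kappa+1}$ along the chosen subsequence survives. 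Without this, or some substitute, your argument does not prove the lemma over $\z$.

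Two further points in the case of non-real $c$. First, ``submultiplicativity'' alone gives only $S(hf)\le S(\tilde q)\,S(g_c)\,S(g_{\bar c})\,4^{\kappa}=O(N^2)$, not $O(N)$; to get $O(N)$ you must bound $S(g_cg_{\bar c})$ itself, and this is precisely the paper's computation: each coefficient of $g_cg_{\bar c}$ is a geometric sum with ratio $c^{2}\ne 1$, hence has modulus at most $2/|1-c^2|$, so $S(g_cg_{\bar c})=O(N)$. This estimate needs to be stated; it is not a formal consequence of the two telescoping identities. Second, the subsequence on which $|c^{2N}-1|$ stays bounded below does not require equidistribution or a case split on roots of unity: since $(c^{2N}-1)-(c^{2N+2}-1)$ has modulus $|c^{2N}|\,|c^2-1|=|c^2-1|\ne 0$, for every $N$ at least one of $N,N+1$ satisfies $|c^{2N}-1|\ge |c^2-1|/2$, which already produces a subsequence with successive ratios bounded by $2$ as required by Remark \ref{poa}; this is the paper's argument, phrased there via the consecutive partial sums $1+c^2+\cdots+c^{2l-2}$.
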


\begin{proof}
Let $c$ be a complex root of $h$ of multiplicity $\kappa$ and modulus $1$. That is, $$h(x)=(x-c)^{\kappa}\tilde{h}(x)$$ over $\mathbb{C}$. Let $$v_l(x)=x^{l-1}+cx^{l-2}+\cdots+c^{l-2}x+c^{l-1}.$$ Then the product $$h(x)v_l^{\kappa+1}(x)=(x^l-c^l)^{\kappa}\tilde{h}(x)v_l(x)$$ satisfies $S(hv_l^{\kappa+1})=O(l)$, because $S(v_l)=O(l)$. On the other hand, because $|c|=1$, we have that $S(v_l^{\kappa+1}) \geq |v_l(c)^{\kappa+1}|=l^{\kappa+1}.$ This implies that if $c \in \mathbb{R}$; i.e. $c = \pm 1$, then $\Delta_h(l) \succeq l^{\kappa+1}$, where the distortion is considered over $\mathbb{C}$, $\mathbb{R}$ or over $\mathbb{Z}$. 

We will show that a similar computation holds over $\mathbb{R}$ and over $\z$ even in the case when $c \in \mathbb{C} - \mathbb{R}$. Let $\bar c$ be the complex conjugate of $c$. By hypothesis that $c \notin \mathbb{R}$ we know that $\bar c \ne c.$ Then $\bar c=c^{-1}$ is a root of $h(x)$ of multiplicity $\kappa$ as well, and $$h(x) = (x-c)^{\kappa}(x-\bar c)^{\kappa} H(x)$$ where $H(x)$ has real coefficients. Consider the product $v_l(x)\bar v_l(x),$ where $$\bar v_l(x) = x^{l-1}+ \bar c x^{l-2}+\cdots +\bar c ^{l-1}.$$ A simple calculation shows that each of the coefficients of this product is a sum of the form $$\sum_{i+j=k} c^i\bar{c}^j= \sum_{i+j=k} c^{i-j}=c^{k} + c^{k-2}+\cdots+ c^{-k}.$$ This is a geometric progression with common ratio $c^2 \ne 1$. Therefore, the modulus of every such coefficient is at most $\frac{2}{|1-c^2|}$ and so $S(v_l\bar v_l)$ is $O(l)$. This computation implies that the products $$h(x)v_l^{\kappa+1}(x)\bar v_l^{\kappa+1}(x) = (x^l - c^l)^{\kappa} (x^l - \bar c^l)^{\kappa}H(x)v_l(x)\bar v_l(x)$$ have the sum of the modules of their coefficients which are $O(l)$.

The polynomial $v_l^{\kappa+1}(x)\bar v_l^{\kappa+1}(x)$ has real coefficients. There is a polynomial $F_l(x)$ with integer coefficients such that each coefficient of $F_l(x)- v_l^{\kappa+1}(x)\bar v_l^{\kappa+1}(x)$ has modulus at most one. Thus $S(hF_l)$ is also $O(l)$.
   
We will show that the sums of modules of coefficients of $F_l(x)$ grow at least as $l^{\kappa+1}$ on a subsequence from Remark \ref{poa}. It suffices to obtain the same property for $ v_l^{\kappa+1}(x)\bar v_l^{\kappa+1}(x).$ Since $|c|=1$,  we have that the sum of the modules of the coefficients of $ v_l^{\kappa+1}(x)\bar v_l^{\kappa+1}(x)$ is at least $$|v_l^{\kappa+1}(c)\bar{v_l}^{\kappa+1}(c)| = l^{\kappa+1}|\bar{v_l}^{\kappa+1}(c)|.$$ We will show that there exists a subsequence $\{l_i\}$ so that on this sequence, $$|\bar v_{l_i}^{\kappa+1}(c)| \geq \frac{1}{2}.$$ We have that $$\bar v_l(c) = c^{l-1}+c^{l-2}\bar c+\cdots+\bar c^{l-1}=c^{l-1}+ c^{l-3} +\cdots+ c^{1-l}$$ because $\bar c=c^{-1}$. Therefore $|\bar v_l(c)| = |1+c^2+\cdots+c^{2l-2}|$ and similarly, $|\bar f_{l+1}(c)| = |1+c^2+\cdots+c^{2l}|.$ One of these two numbers must be at least one half because $|\bar v_l(c) - \bar v_{l+1}(c)| = |c^{2l}| =1$. Thus either  $l$  or $l+1$ can be included in the sequence $\{l_i\}$, and all required properties are shown. 
    \end{proof}
    
    \section{Upper Bounds on Distortion of Polynomials}

%\subsection{Some Linear Algebra}

In order to obtain upper bounds on distortion of polynomials we require some facts from linear algebra.
Fix an integer $k \geq 1$ and let $n>0$ be arbitrary. 

\begin{lemma}\label{ob}
Let $Y_1, \dots, Y_{n}, C_2, \dots, C_{n}$ be $k \times 1$ column vectors. Suppose that the sum of the modules of all coordinates of $C_2, \dots, C_{n}$ is bounded by some constant $c$, and that the modulus of each coordinate of $Y_1$ and $Y_{n}$ is also bounded by $c$. Suppose further we have the relationship 
\begin{equation}\label{mnt}
Y_i=AY_{i-1}+C_i, i=2, \dots, n 
\end{equation}
where $A$ is a $k \times k$ matrix, in Jordan normal form, having only one Jordan block. Then the modulus of each coordinate of arbitrary $Y_i, 2 \leq i \leq n-1$ is bounded by $dcn^{k-1}$ where $d$ depends on $A$ only. In the case where the eigenvalue of $A$ does not have modulus one, the modulus of each coordinate of arbitrary $Y_i, 2 \leq i \leq n-1$ is bounded by $cd$, where $d$ depends on $A$ only. All matrix entries are assumed to be complex.
\end{lemma}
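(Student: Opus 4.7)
The strategy is to solve the recurrence $(\ref{mnt})$ explicitly and then estimate the entries of powers of $A$. Since $A$ is a single Jordan block, write $A=\lambda I+N$, where $\lambda\in\mathbb{C}$ is the eigenvalue and $N$ is the $k\times k$ nilpotent matrix with $1$'s on the superdiagonal, so that $N^k=0$. The binomial theorem then gives
\[ A^m \;=\; \sum_{r=0}^{k-1}\binom{m}{r}\lambda^{m-r}N^r, \]
whose $(i,j)$-entry for $j\ge i$ is $\binom{m}{j-i}\lambda^{m-(j-i)}$ and vanishes for $j<i$. Iterating $(\ref{mnt})$ yields the \emph{forward} solution
\[ Y_i \;=\; A^{i-1}Y_1 + \sum_{j=2}^i A^{i-j}C_j, \]
and, when $\lambda\ne 0$ so that $A$ is invertible, the \emph{backward} solution
\[ Y_i \;=\; A^{-(n-i)}Y_n - \sum_{j=i+1}^n A^{-(j-i)}C_j, \]
obtained by inverting $(\ref{mnt})$ as $Y_{i-1}=A^{-1}(Y_i-C_i)$. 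Write $|v|_\infty$ for the maximum modulus of the coordinates of a vector $v$ and $\|v\|_1$ for the sum of the moduli of its coordinates; the hypotheses say $|Y_1|_\infty,\,|Y_n|_\infty\le c$ and $\sum_{j=2}^n \|C_j\|_1\le c$.

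I would then case-split on $|\lambda|$. When $|\lambda|\le 1$ (including $\lambda=0$), every entry of $A^m$ has modulus at most $\binom{m}{k-1}\le m^{k-1}$. From the forward solution, each coordinate of $A^{i-1}Y_1$ has modulus at most $k\cdot n^{k-1}\cdot c$, and each coordinate of $\sum_j A^{i-j}C_j$ has modulus at most $n^{k-1}\sum_{j=2}^n\|C_j\|_1\le cn^{k-1}$, so altogether $|Y_i|_\infty\le(k+1)cn^{k-1}$, which is the required general bound. When $|\lambda|>1$, the matrix $A^{-1}=\lambda^{-1}\sum_{r=0}^{k-1}(-\lambda^{-1}N)^r$ is upper triangular with eigenvalue $\lambda^{-1}$, $|\lambda^{-1}|<1$, and the same binomial computation applied to $A^{-1}$ shows that each entry of $(A^{-1})^m$ has modulus at most $\binom{m}{k-1}|\lambda|^{-(m-k+1)}\cdot C(A)$, which tends to $0$ as $m\to\infty$. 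Hence the entries of $(A^{-1})^m$ are uniformly bounded by some constant $M=M(A)$, and the backward solution yields $|Y_i|_\infty\le 2Mc$, stronger than the polynomial bound.

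The refined estimate for $|\lambda|\ne 1$ now follows from the same two formulas. When $|\lambda|>1$, the bound $|Y_i|_\infty\le 2Mc$ was already obtained above. When $|\lambda|<1$, the entries of $A^m$ satisfy $\binom{m}{k-1}|\lambda|^{m-k+1}\to 0$ and so are uniformly bounded by some $M'=M'(A)$; the forward solution then gives $|Y_i|_\infty\le 2M'c$. In both subcases the resulting constant depends only on $A$, as claimed. The only conceptual point to watch is matching the direction of iteration to the sign of $\log|\lambda|$: one must use the \emph{decaying} direction in each regime, since the growing direction would produce exponential blow-up the hypotheses cannot absorb. Beyond that, the proof is elementary bookkeeping with the explicit formula for $A^m$, so no step should present a real obstacle.
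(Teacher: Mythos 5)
Your proof is correct and follows essentially the same route as the paper: iterate the recurrence forward and bound the entries of $A^m$ by $\binom{m}{r}|\lambda|^{m-r}$ (giving $n^{k-1}$ when $|\lambda|=1$ and a constant when $|\lambda|<1$), and iterate backward through $A^{-1}$ when $|\lambda|>1$; the only cosmetic difference is that the paper conjugates $A^{-1}$ to its Jordan form before estimating, whereas you expand $(A^{-1})^m$ directly from $A^{-1}=\lambda^{-1}(I+\lambda^{-1}N)^{-1}$, which works equally well. One small slip worth fixing: the intermediate claim that every entry of $A^m$ is at most $\binom{m}{k-1}$ can fail when $m<2(k-1)$ (binomial coefficients are not monotone in the lower index), but the bound $\binom{m}{r}\le m^{k-1}$ for all $r\le k-1$, which is what you actually use, is valid, so the argument stands.
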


\begin{proof}
Let $\lambda$ be the eigenvalue of $A$, so that $A=\begin{pmatrix}
\lambda & 0 & 0  \hdots & 0  \\
1 & \lambda & 0 \hdots & 0\\
\vdots & \ddots & \ddots & \vdots \\
0 & 0  \hdots  & 1 & \lambda \\
\end{pmatrix}.$

We will consider cases.

\begin{itemize}
\item First suppose that $|\lambda| < 1$. 
\end{itemize}
From Formula \eqref{mnt} we derive:
\begin{equation}\label{zx}
Y_i=A(AY_{i-2}+C_{i-1})+C_i=(A)^2Y_{i-2}+AC_{i-1}+C_{i} = \cdots $$ 
$$= (A)^{i-1}Y_1+(A)^{i-2}C_2+ \cdots + AC_{i-1}+C_i.
\end{equation}
The following formula for $A^r$ is well-known because $A$ is assumed to be a Jordan block; it may also be checked easily using induction. We have that 
$$A^r=
\begin{pmatrix}
\lambda^r & 0 & 0  \hdots & 0  \\
r\lambda^{r-1} & \lambda^r& 0 \hdots & 0\\
\frac{r(r-1)}{2!}\lambda^{r-2} & r\lambda^{r-1} & \lambda^r \hdots & 0\\
\vdots & \ddots & \ddots & \vdots \\
\frac{r!}{(r-(k-1))!(k-1)!}\lambda^{r-(k-1)}   \hdots & \frac{r(r-1)}{2!}\lambda^{r-2} & r\lambda^{r-1} & \lambda^r \\
\end{pmatrix},
$$ with the understanding that if $r < k-1$, any terms of the form $\binom{r}{j}\lambda^{r-j}$ where $r<j$ are $0$.
Arbitrary nonzero element of the matrix $A^r$ is of the form $\binom{r}{j}\lambda^{r-j}$ for some $j \leq k-1$. 
Let $a_{s,t}(r)$ denote the $s,t$ entry of $A^r$. Then $a_{s,t}(r)$ is either zero or of the form $\binom{r}{j}\lambda^{r-j}$ for some $0\leq j\leq k-1$ depending on $s$ and $t$. Then $$\sum_{r=1}^i|a_{s,t}(r)| \leq \sum_{r=1}^{\infty}|a_{s,t}(r)|=\sum_{r=1}^{\infty}|\binom{r}{j}\lambda^{r-j}|$$ which is a constant depending on $A$ and not on $i$, because the series on the right is convergent when $|\lambda|<1$. Let $$c_1=\max_{1 \leq s,t \leq k}\{\sum_{r=1}^{\infty}|a_{s,t}(r)|\}.$$ Let $\bar{A}$ be the $k \times k$ matrix whose $s,t$ entry is $\sum_{r=1}^{\infty}|a_{s,t}(r)|$, and the column $\bar{C}$ be obtained by placing in the $j^{th}$ row the sum of the modules of the entries of the $j^{th}$ row of all $C_i$ and $Y_1$. Then every entry of $\bar{C}$ is bounded by $2c$. Observe that the modulus of every entry in the right side of \eqref{zx} is bounded by an entry of $\bar{A}\bar{C}$, which is in turn bounded by $2cc_1$, which does not include any power of $n$ at all.

\begin{itemize}
\item Let $|\lambda| > 1$. 
\end{itemize}
Because $\lambda^{-1}$ is an eigenvalue of $A^{-1}$, there exists a decomposition $A^{-1}=SJS^{-1}$ where $$J=\begin{pmatrix}
\frac{1}{\lambda} & 0 & 0  \hdots & 0  \\
1 & \frac{1}{\lambda} & 0 \hdots & 0\\
\vdots & \ddots & \ddots & \vdots \\
0 & 0  \hdots  & 1 & \frac{1}{\lambda} \\
\end{pmatrix}.$$ Letting $Y_i'=S^{-1}Y_i$ and $C_i'=S^{-1}C_i$ we see by Equation \eqref{mnt} that $$Y_{n-r}'=J^{r}Y_n'+J^{r}C_n'+ J^{r-1}C_{n-1}'+\cdots + JC_{n-r+1}',$$ for $r=1,\dots,n-2$. Observe that the sum of modules of coordinates of $Y_{n-r}'$ is less than or equal to $ksc$, where $s$ depends on $S$ (and hence on $A$) only. Similarly, the sum of all modules of all coordinates of $C_2',\dots,C_n'$ is bounded above by $ksc$. This case now follows just as the previous one to obtain constant upper bounds on the modules of the entries in $Y_2',\dots,Y_{n-1}'$. Finally, the modulus of any coordinate of $Y_{n-r}$ is bounded by $ks$ times the modulus of a coordinate of $Y_{n-r}'$.

\begin{itemize}
\item Let $|\lambda| = 1$. 
\end{itemize}

In this case, we have that $$|\binom{r}{j}\lambda^{r-j}| = \binom{r}{j} = \frac{r(r-1) \cdots (r-(j-1))}{j!}$$ $$\leq r(r-1) \cdots (r-(j-1)) \leq r^j \leq n^{k-1}.$$

It follows from Equation $(\ref{zx})$ that every entry of $Y_i$ is bounded above by $2cn^{k-1}$.
\end{proof}

\begin{lemma}\label{yx}
Let $Y_1, \dots, Y_{n}, C_2, \dots, C_{n}$ be $k \times 1$ column vectors. Suppose that the sum of the modules of all coordinates of $C_2, \dots, C_{n}$ is bounded by some constant $c$, and that the modulus of each coordinate of $Y_1$ and $Y_{n}$ is also bounded by $c$. Suppose further we have the relationship $$Y_i=AY_{i-1}+C_i, i=2, \dots, n $$where $A$ is a $k \times k$ matrix. Then the modulus of each coordinate of arbitrary $Y_i, 2 \leq i \leq n-1$ is bounded by $dcn^{\kappa-1}$ where $d$ depends on $A$ only, and $\kappa \leq k$ is the maximal size of any Jordan block of the Jordan form of $A$ having eigenvalue with modulus one.
\end{lemma}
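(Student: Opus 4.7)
The plan is to reduce to the setting of Lemma \ref{ob} by changing basis to put $A$ in Jordan normal form. Write $A = SJS^{-1}$ where $J$ is block-diagonal with Jordan blocks $J_1,\dots,J_p$ of sizes $k_1,\dots,k_p$ and eigenvalues $\lambda_1,\dots,\lambda_p$. Set $Y_i' = S^{-1}Y_i$ and $C_i' = S^{-1}C_i$. Then the recursion becomes $Y_i' = JY_{i-1}' + C_i'$. Since the entries of $S^{-1}$ are bounded by a constant depending only on $A$, the sum of moduli of coordinates of $C_2',\dots,C_n'$ is bounded by $c_1 c$, and the moduli of coordinates of $Y_1'$ and $Y_n'$ are bounded by $c_1 c$, for some $c_1 = c_1(A)$.

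Next, because $J$ is block-diagonal, the recursion $Y_i' = JY_{i-1}' + C_i'$ decouples into $p$ independent recursions, one for each Jordan block $J_j$: writing $Y_i' = (Y_i^{(1)},\dots,Y_i^{(p)})^T$ and similarly for $C_i'$, we have $Y_i^{(j)} = J_j Y_{i-1}^{(j)} + C_i^{(j)}$ for each $j$. The hypotheses of Lemma \ref{ob} are satisfied for each such subsystem with the same constant $c_1 c$ (possibly enlarged to account for coordinate projections). Apply Lemma \ref{ob} to each block: for blocks with $|\lambda_j| \neq 1$ the coordinates of $Y_i^{(j)}$ are bounded by a constant depending only on $A$, while for blocks with $|\lambda_j| = 1$ of size $k_j$, the coordinates are bounded by $d_j c n^{k_j - 1}$.

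Combining the bounds over all blocks, and noting $k_j - 1 \leq \kappa - 1$ for every block contributing a non-constant term (and trivially so for constant contributions, absorbed into $d$), we obtain that every coordinate of $Y_i'$ is bounded by $d' c n^{\kappa - 1}$ for a constant $d'$ depending only on $A$. Finally, $Y_i = S Y_i'$, so each coordinate of $Y_i$ is bounded by $k \|S\|_{\infty} d' c n^{\kappa - 1}$, which is of the desired form $dc n^{\kappa-1}$ with $d$ depending only on $A$.

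The main (minor) obstacle is simply bookkeeping: verifying that the passage through $S^{-1}$ and $S$ only inflates constants by factors depending on $A$, and checking that the block decomposition of the Jordan recursion is truly independent so that Lemma \ref{ob} applies without modification to each block. No new analytic input is required beyond Lemma \ref{ob}; this is a pure linear-algebra reduction.
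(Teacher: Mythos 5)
Your proposal is correct and follows essentially the same route as the paper: conjugate by $S$ to pass to the Jordan form, observe the transformed data satisfy the same hypotheses with constants inflated only by factors depending on $A$, decouple the recursion block by block, apply Lemma \ref{ob} to each block (constant bound for eigenvalues off the unit circle, $n^{k_j-1}$ for blocks of size $k_j$ with $|\lambda_j|=1$), and transfer back via $S$. No substantive difference from the paper's argument.
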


\begin{proof}
There exists a Jordan decomposition, $A=SA'S^{-1}$.

Let $S^{-1}=(s_{i,j})_{1 \leq i,j \leq k}$ and let $s=\max |s_{i,j}|$. Then for $C_i'=S^{-1}C_i$ and $Y_i'=S^{-1}Y_i$ we have that 
\begin{equation}\label{s}
Y_i'=A'Y_{i-1}'+C_i'.
\end{equation}
By hypothesis, the sum of the modules of all coordinates of $C_2',\dots,C_n'$ is bounded by $ksc=c'$ and the coordinates of $Y_1'$ and $Y_{n}'$ are bounded by $c'$ as well. As we will explain, our problem can be reduced to the similar problem for $Y_i'$ in (\ref{s}). Suppose that the modules of coordinates of every $Y_i'$ are bounded by $dc'n^{\kappa-1}$ where $d$ depends on $A$ only. Then, letting $S=(\tilde{s_{i,j}})_{1 \leq i,j \leq k}$ and $\tilde{s}=\max|\tilde{s_{i,j}}|$ we have by definition of $Y_i'$ that arbitrary element of $Y_i$ has modulus bounded above by $k\tilde{s}dc'n^{\kappa-1}=d'cn^{\kappa-1}$ where $d'=k^2s\tilde{s}d$ only depends on $A'$, as required.

Lemma \ref{ob} says that if $A'$ has only one Jordan block, then the bound is constant if the eigenvalue does not have modulus one. Otherwise, we have in this case that $k=\kappa$ and the claim is true. If there is more than one Jordan block present in $A'$, the problem is decomposed into at most $k$ subproblems, each with only one Jordan block of size smaller than $k$. Again, we are done  by Lemma \ref{ob}.
\end{proof}

We will use Lemma \ref{yx} to prove the following fact, which requires establishing some notation prior to being introduced. Let $d_0, \dots, d_t \in \z$ where $d_0,d_k \ne 0$. Let the $(n+k) \times n$ matrix $M$ have $j^{th}$ column, for $j=1, \dots, n$, given by $[0, \dots, 0, d_0, d_1, \dots, d_k, 0, \dots, 0]^{T}$, where $d_0$ first appears as the $j^{th}$ entry in this $j^{th}$ column. Given the matrix $M$, we may also construct the matrix
\begin{equation}\label{8u}
A=
\begin{pmatrix}
0 & 1 & 0  \hdots & 0  \\
0 & 0 & 1 \hdots & 0\\
\vdots & \vdots & \ddots & \vdots \\
0 & 0 \hdots & 0 & 1 \\
a_1 & a_2  \hdots  & a_{k-1} & a_{k}, \\
\end{pmatrix}
\end{equation}
where $a_j=-\frac{d_{k-j+1}}{d_0}$, for $j=1,\dots,k$. Let $\kappa$ be the maximal size of a Jordan block of the Jordan form of $A$ having eigenvalue with modulus one.

\begin{lemma}\label{r9}
Suppose that $X=[x_1, x_2, \dots, x_n]^T$ is a solution to the system of equations $MX=B$, where $B=[b_1, b_2, \dots, b_{n+k}]^T$. Then it is possible to bound the modules of all coordinates $x_1, \dots, x_n$ of the vector $X$ such that $|x_i| \leq cbn^{\kappa-1}$ where $b=\sum_j\{|b_j|\}$ for $1 \leq j \leq n+k$ and the constant $c$ depends upon $d_0, \dots, d_k$ only. 
\end{lemma}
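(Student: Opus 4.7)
The plan is to reinterpret the linear system $MX = B$ as a two-point boundary value problem for a linear recurrence whose one-step matrix is precisely the companion matrix $A$ of Equation \eqref{8u}, and then invoke Lemma \ref{yx}. Observing that row $i$ of $M$ has entries $d_k, d_{k-1}, \ldots, d_0$ located in columns $i-k, i-k+1, \ldots, i$ (intersected with $\{1, \ldots, n\}$), the $i$-th equation for $k+1 \le i \le n$ reads
$$d_0 x_i + d_1 x_{i-1} + \cdots + d_k x_{i-k} = b_i.$$
Since $d_0 \ne 0$, solving for $x_i$ yields a linear recurrence of order $k$ with inhomogeneous term $b_i/d_0$. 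Packaging the unknowns into vectors $Y_i = [x_i, x_{i+1}, \ldots, x_{i+k-1}]^T$ for $i = 1, \ldots, n-k+1$ converts this into a first-order recurrence $Y_{i+1} = A Y_i + C_{i+1}$, where $C_{i+1} = [0, \ldots, 0, b_{i+k}/d_0]^T$; a direct check confirms that the last row $(-d_k/d_0, -d_{k-1}/d_0, \ldots, -d_1/d_0)$ of this $A$ matches the entries $a_j = -d_{k-j+1}/d_0$ specified in \eqref{8u}.

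The next step is to bound the two endpoint vectors $Y_1$ and $Y_{n-k+1}$ in terms of $b$ using the $2k$ rows of $MX = B$ not already consumed by the recurrence. The first $k$ rows form a lower-triangular system in $x_1, \ldots, x_k$ with $d_0$ on the diagonal and right-hand side $b_1, \ldots, b_k$, so forward substitution gives $|x_j| \le c_0 b$ for $j = 1, \ldots, k$, where $c_0$ depends only on $d_0, \ldots, d_{k-1}$. Symmetrically, the last $k$ rows (read from row $n+k$ upward) constitute a system in $x_{n-k+1}, \ldots, x_n$ with $d_k$ on the antidiagonal and right-hand side $b_{n+1}, \ldots, b_{n+k}$; backward substitution, starting from $x_n = b_{n+k}/d_k$, gives $|x_j| \le c_0 b$ for $j = n-k+1, \ldots, n$ (after enlarging $c_0$ to depend also on $d_1, \ldots, d_k$). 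Thus every coordinate of $Y_1$ and $Y_{n-k+1}$ has modulus at most $c_0 b$.

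Finally, the total sum of moduli of the coordinates of $C_2, \ldots, C_{n-k+1}$ is $|d_0|^{-1} \sum_{i=k+1}^{n} |b_i| \le b/|d_0|$. Together with the endpoint bounds just obtained, this verifies the hypotheses of Lemma \ref{yx} for the sequence $(Y_i)$ with parameter $O(b)$. Applying that lemma yields $|x_i| \le c\, b\, n^{\kappa - 1}$ for every interior index, with $c$ depending only on $A$, hence only on $d_0, \ldots, d_k$; the endpoint coordinates already satisfy the stronger bound $|x_i| \le c_0 b$, so the claim holds uniformly for all $1 \le i \le n$. The only real obstacle is bookkeeping: identifying the correct boundary subsystems and verifying that the companion matrix produced by the recurrence agrees exactly with the $A$ of \eqref{8u}; once these identifications are in place, the result is a straightforward application of Lemma \ref{yx}.
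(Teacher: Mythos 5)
Your proposal is correct and follows essentially the same route as the paper: bound the boundary coordinates $x_1,\dots,x_k$ and $x_{n-k+1},\dots,x_n$ from the triangular boundary subsystems, rewrite the interior equations as the first-order vector recurrence with the companion matrix $A$ of Equation \eqref{8u}, and invoke Lemma \ref{yx}. The only cosmetic difference is that you obtain the boundary bounds by forward/backward substitution where the paper uses Cramer's rule (Lemma \ref{a}), which is an equivalent computation.
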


Prior to proving Lemma \ref{r9} we prove an easier special case.

\begin{lemma}\label{a}
It is possible to bound the coordinates $x_1, \dots, x_{k}$ of the vector $X$ from Lemma \ref{r9} from above by $b\tilde{\gamma}$ where $b=\sum_j\{|b_j|\}$ and $\tilde{\gamma}=\tilde{\gamma}(d_0, \dots, d_{k-1})$. 
\end{lemma}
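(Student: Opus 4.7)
The plan is to examine the first $k$ rows of the system $MX = B$ and observe that they form a lower-triangular system with $d_0$ on the diagonal, from which $x_1, \dots, x_k$ can be recovered by forward substitution.

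Concretely, since the $j$-th column of $M$ has its nonzero entries $d_0, d_1, \dots, d_k$ placed in rows $j, j+1, \dots, j+k$, the first $k$ equations of $MX=B$ read
\begin{align*}
d_0 x_1 &= b_1,\\
d_1 x_1 + d_0 x_2 &= b_2,\\
d_2 x_1 + d_1 x_2 + d_0 x_3 &= b_3,\\
&\ \ \vdots\\
d_{k-1} x_1 + d_{k-2} x_2 + \cdots + d_1 x_{k-1} + d_0 x_k &= b_k.
\end{align*}
Since $d_0 \neq 0$, this is a triangular system in the unknowns $x_1, \dots, x_k$ (regardless of $x_{k+1}, \dots, x_n$), and we can solve it by forward substitution: $x_1 = b_1/d_0$, and in general $x_j = \bigl(b_j - d_{j-1} x_1 - d_{j-2} x_2 - \cdots - d_1 x_{j-1}\bigr)/d_0$ for $j = 1, \dots, k$.

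From this it follows by a straightforward induction on $j$ that there exist constants $\gamma_j = \gamma_j(d_0, \dots, d_{j-1})$ such that $|x_j| \leq \gamma_j (|b_1| + \cdots + |b_j|) \leq \gamma_j b$. Indeed, the base case is $|x_1| \leq |b_1|/|d_0|$, and assuming the bound for $x_1, \dots, x_{j-1}$, the formula above gives
\[
|x_j| \leq \frac{1}{|d_0|}\Bigl(|b_j| + \sum_{i=1}^{j-1}|d_{j-i}|\,|x_i|\Bigr) \leq \frac{1}{|d_0|}\Bigl(|b_j| + b\sum_{i=1}^{j-1}|d_{j-i}|\gamma_i\Bigr),
\]
which is of the required form. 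Setting $\tilde{\gamma} = \max_{1 \leq j \leq k} \gamma_j$ yields $|x_j| \leq b\tilde{\gamma}$ for $1 \leq j \leq k$, with $\tilde{\gamma}$ depending only on $d_0, \dots, d_{k-1}$ as claimed. There is no real obstacle here; the argument is purely a triangular back-solve, and the point of isolating this lemma is just to supply the base-of-induction data needed to launch the recursive matrix-iteration estimate of Lemma \ref{r9}.
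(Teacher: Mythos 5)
Your proposal is correct: the first $k$ rows of $M$ do form a lower-triangular system in $x_1,\dots,x_k$ alone (column $j$ has its $d_0$ in row $j$, so row $i\le k$ involves only $x_1,\dots,x_i$), and your forward-substitution induction gives $|x_j|\le \gamma_j b$ with $\gamma_j$ built recursively from $d_0,\dots,d_{j-1}$, exactly matching the claimed dependence. The paper exploits the same structural observation but packages it differently: it applies Cramer's rule to the $k\times k$ upper-left submatrix $L$, notes $\det(L)=d_0^{k}$, and bounds $|\det(L_i)|$ by expanding along the column containing $b_1,\dots,b_k$, so that $\det(L_i)$ is a linear combination of the $b_j$ with coefficients that are subdeterminants in $d_0,\dots,d_{k-1}$; the triangle inequality then gives the bound. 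The two arguments buy essentially the same thing: the paper's version yields a closed-form expression for each $x_i$ as a ratio of determinants, which makes the ``depends only on $d_0,\dots,d_{k-1}$'' claim visible at a glance, while your recursive estimate is more self-contained (no determinant machinery) and produces the constants $\gamma_j$ explicitly, which is all that Lemma \ref{r9} needs as input. Either proof is acceptable; no gap.
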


\begin{proof}
By Cramer's Rule, we have the explicit formula that $$|x_i|=\bigg|\frac{\det(L_i)}{\det(L)}\bigg|$$ where $L$ is the $k \times k$ upper left submatrix of $M$ corresponding to the first $k$ equations, and $L_i$ is obtained by replacing column $i$ in $L$ with $[b_1, \dots, b_{k}]^T$. Because $\det(L)=d_0^{k}$, it suffices to show that the desired bounds exist for $\det(L_i);$ that is, we must show that there exists a constant $\tilde{\gamma}$ depending on $d_0, \dots, d_{k-1}$ only such that $|\det(L_i)| \leq b\tilde{\gamma}$ for $i=1, \dots, k$. By expanding along the $i^{th}$ column in $L_i$, we find that $$\det(L_i)=\pm b_1f_1(d_0, \dots, d_{k-1})\pm b_2f_2(d_0, \dots, d_{k-1}) \pm \cdots \pm b_{k}f_{k}(d_0, \dots, d_{k-1}),$$ where for each $i=1, \dots, k$, $f_i$ is a function of $d_0, \dots, d_{k-1}$ only obtained as the determinant of a submatrix containing none of $b_1, \dots, b_{k}$. The proof is complete by the triangle inequality.
\end{proof}

Note that the $|x_j|$ for $j=n-k+1, \dots, n$ are similarly bounded by $b\overline{\gamma}$ for the same $b$ and some $\overline{\gamma}=\overline{\gamma}(d_{0}, \dots, d_{k-1})$ as in Lemma \ref{a}. It is clear according to Lemma \ref{a} that we may assume that $|x_i| \leq b\gamma$ for the same $\gamma=\gamma(d_0, \dots, d_{k-1})$ for all $i=1, \dots, k, n-k+1, \dots, n$.

We proceed with the proof of Lemma \ref{r9}.
\begin{proof}
It suffices to obtain upper bounds for $|x_i|$ when $n-k \geq i \geq k+1$. 

For such indices, we have that $$d_{k}x_{i-k}+d_{k-1}x_{i+1-k}+ \cdots + d_0x_i=b_i.$$ In other words, $$x_i=\xi_i+a_1x_{i-k}+a_2x_{i+1-k}+\cdots+a_{k}x_{i-1},$$ where $\xi_i=\frac{b_i}{d_0}$ and $a_j=-\frac{d_{k-j+1}}{d_0}$. Let $X_i=[x_{i-k+1}, \dots, x_i]^{T}$ and let $\Xi_i=[0, \dots, 0, \xi_i]^T$. Then for the matrix $A$ of Equation \eqref{8u} we have the recursive relationship $$X_i=AX_{i-1}+\Xi_i$$ for $i=k, \dots, n$. Observe that the matrix $A$ depends on $d_0, \dots, d_k$ only, and that the sum of modules of the entries in all $\Xi_i$ are bounded by $\frac{b}{|d_0|}$. 

We see by Lemma \ref{a} that Lemma \ref{yx} applies to our situation. Therefore, the modules of coordinates of arbitrary $X_i$, $k+1 \leq i \leq n-k$ are bounded by $dc(n-k+1)^{\kappa-1}\leq dcn^{\kappa-1}$, where $d$ depends only on $d_0, \dots, d_k$, $c=\max\{\frac{b}{|d_0|},\gamma b\}$.
\end{proof}

\begin{lemma}\label{cz}
Let $h(x)=d_0+\dots+d_tx^t$, where $d_0,d_t \ne 0$. Then the distortion of $h$ is at most $l^{\kappa+1}$ where $\kappa$ is the maximal size of a Jordan block of the Jordan form of $$A=
\begin{pmatrix}
0 & 1 & 0  \hdots & 0  \\
0 & 0 & 1 \hdots & 0\\
\vdots & \vdots & \ddots & \vdots \\
0 & 0 \hdots & 0 & 1 \\
-\frac{d_t}{d_0} & -\frac{d_{t-1}}{d_0}   \hdots  & -\frac{d_2}{d_0}  & -\frac{d_1}{d_0}  \\
\end{pmatrix}
$$ of Equation \eqref{8u} with eigenvalue having modulus one. 
\end{lemma}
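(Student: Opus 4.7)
My plan is to reduce the inequality $\Delta_h(l) \preceq l^{\kappa+1}$ to a direct application of Lemma \ref{r9}. Fix the constant $c>0$ appearing in Definition \ref{pd}; since distortion is independent of $c$ up to $\approx$, it is enough to show $\Delta_{h,c}(l) = O(l^{\kappa+1})$. So I would take an arbitrary $f \in \z[x]$ (or $\mathbb{R}[x]$, or $\mathbb{C}[x]$ — the argument does not distinguish) with $\deg(f)\le cl$ and $S(hf)\le cl$, and aim to bound $S(f)$ by a constant multiple of $l^{\kappa+1}$.

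Writing $f(x)=\sum_{j=1}^n x_j x^{j-1}$ where $n-1=\deg(f)\le cl$, and $h(x)f(x)=\sum_{j=1}^{n+t}b_jx^{j-1}$, the coefficients of $hf$ are obtained from those of $f$ by exactly the linear system $MX=B$ appearing in Lemma \ref{r9}, with $k$ replaced by $t$, $X=(x_1,\dots,x_n)^T$, and $B=(b_1,\dots,b_{n+t})^T$. A glance at Equation \eqref{8u} shows that the companion-style matrix $A$ built from the coefficients $d_0,\dots,d_t$ in Lemma \ref{r9} coincides with the matrix $A$ written in the statement of Lemma \ref{cz}, so the invariant $\kappa$ (the largest Jordan block of $A$ corresponding to an eigenvalue of modulus one) is the same quantity in both places. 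This identification is the one genuinely substantive bookkeeping point, though it is immediate once the matrices are placed side by side.

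Next I would feed this data into Lemma \ref{r9}. The hypothesis $\sum_j|b_j|=S(hf)\le cl$ provides the scalar $b$ in the lemma, so each coefficient satisfies $|x_i|\le c_0\, b\, n^{\kappa-1}\le c_0(cl)(cl+1)^{\kappa-1}$, where $c_0$ depends only on $h$. Thus $|x_i|=O(l^{\kappa})$ uniformly in $i$. Summing over the $n\le cl+1$ coordinates yields
$$S(f)=\sum_{i=1}^n|x_i|\le n\cdot O(l^\kappa) = O(l^{\kappa+1}),$$
which is the desired bound. Taking the supremum over admissible $f$ gives $\Delta_{h,c}(l)=O(l^{\kappa+1})$, and hence $\Delta_h(l)\preceq l^{\kappa+1}$.

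I do not anticipate a real obstacle: once Lemma \ref{r9} is in hand, the proof is bookkeeping, and the only place where care is needed is matching the matrix $A$ (and the index $\kappa$) in the two lemmas and keeping the degree count $n$ in sync with $\deg(f)+1$. The philosophical point worth noting is that the factor $n$ gained from summing $n$ coefficients is precisely what turns the coefficient bound $O(l^{\kappa})$ into the distortion bound $O(l^{\kappa+1})$, matching the lower bound from Lemma \ref{hnc}.
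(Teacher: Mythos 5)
Your proposal is correct and follows essentially the same route as the paper: both reduce the bound to the linear system $MX=B$ relating the coefficients of $f$ to those of $hf$, apply Lemma \ref{r9} (after identifying its companion matrix and $\kappa$ with those in the statement) to get $|x_i|=O(l^{\kappa})$ per coefficient, and then sum over the at most $cl+1$ coefficients to obtain $\Delta_h(l)\preceq l^{\kappa+1}$. No gaps; the bookkeeping (degree count and the identification of $A$ with $k$ replaced by $t$) is handled exactly as in the paper's proof.
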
 

\begin{proof} 
Consider any $f=\sum_{q=s}^{s+p}z_qx^q$ as in Definition \ref{pd}. Then consider $hf=\sum_{j=s}^{s+p+t}y_jx^j$. The coefficients $y_j$ are given by the matrix equation $MZ=Y,$ where $Z=[z_{s}, \dots, z_{s+p}]^T, Y=[y_{s}, \dots, y_{s+p+t}]^T$ and 
$$M=
\begin{pmatrix}
d_{0} & 0 & 0 & \hdots & 0  \\
d_{1} & d_{0} & 0 & \hdots & 0 \\
d_{2} & d_{1} & d_{0}& \hdots & 0\\
\vdots & \hdots & \ddots & \ddots & \vdots \\
d_{t} & d_{t-1} & \hdots & d_{1} \hdots & 0 \\
0 & d_{t} & \hdots & d_{2} \hdots & 0 \\
\vdots &  & \ddots &  & \vdots \\
0 & \hdots & 0 & d_{t} & d_{t-1} \\
0 & \hdots & 0 & 0 & d_{t} \\
\end{pmatrix}
$$
is an $(p+t+1) \times (p+1)$ matrix.

By Lemma \ref{r9} we have that for each $q=s, \dots, s+p$ that $|z_q| \leq cy(p+1)^{\kappa-1}$ where $c=c(d_0,\dots,d_t), y=\sum_j |y_{j}| \leq l.$ Therefore, $$\Delta_h(l) \leq S(f) = \sum_{q=s}^{s+p}|z_q| \leq c(l+1)^{\kappa+1}.$$

\end{proof}

    The following theorem shows that the upper and lower bounds are the same, and so we can compute exactly the distortion of a polynomial.
       
   \begin{theorem}\label{tgs}
   Let $h(x)=d_0+\dots+d_tx^t$ be a polynomial in $\z[x]$. Then the distortion of $h$ is equivalent to a polynomial. Further, the degree of this polynomial is exactly one plus the maximal multiplicity of a (complex) root of $h(x)$ having modulus one. 
   \end{theorem}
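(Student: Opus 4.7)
The plan is to combine Lemmas \ref{hnc} and \ref{cz} and then verify that the two quantities called $\kappa$ in those two lemmas coincide. Lemma \ref{hnc} shows that for every complex root $c$ of $h(x)$ of modulus one and multiplicity $\kappa'$, we have $\Delta_h(l) \succeq l^{\kappa'+1}$. Taking the maximum of $\kappa'$ over all roots of $h$ of modulus one (if any exist), we obtain $\Delta_h(l) \succeq l^{\kappa+1}$, where $\kappa$ denotes the maximal multiplicity of a (complex) root of $h$ having modulus one. On the other hand, Lemma \ref{cz} furnishes $\Delta_h(l) \preceq l^{\tilde{\kappa}+1}$, where $\tilde{\kappa}$ is the maximal size of a Jordan block of the companion-type matrix $A$ of Equation \eqref{8u} associated to an eigenvalue of modulus one. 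So the theorem is reduced to the purely linear-algebraic identity $\kappa = \tilde{\kappa}$.

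To establish $\kappa=\tilde{\kappa}$, I would first compute the characteristic polynomial of the matrix
$$A=
\begin{pmatrix}
0 & 1 & 0  \hdots & 0  \\
0 & 0 & 1 \hdots & 0\\
\vdots & \vdots & \ddots & \vdots \\
0 & 0 \hdots & 0 & 1 \\
-\frac{d_t}{d_0} & -\frac{d_{t-1}}{d_0}   \hdots  & -\frac{d_2}{d_0}  & -\frac{d_1}{d_0}  \\
\end{pmatrix},
$$
which is a standard companion matrix. An easy cofactor expansion gives
$$\chi_A(x) = x^t + \frac{d_1}{d_0}x^{t-1} + \cdots + \frac{d_{t-1}}{d_0}x + \frac{d_t}{d_0} = \frac{1}{d_0}\, x^t h(1/x).$$
Since $d_0, d_t \ne 0$, this is a polynomial of degree $t$ with nonzero constant term, and its roots are precisely the reciprocals $1/c$ of the roots $c$ of $h$, with matching multiplicities. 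Because $|c|=1$ if and only if $|1/c|=1$, the multisets of roots of modulus one of $h$ and of $\chi_A$ coincide (after the bijection $c \leftrightarrow 1/c$), so the maximal algebraic multiplicity of an eigenvalue of $A$ of modulus one equals $\kappa$.

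It is a standard fact that the minimal polynomial of a companion matrix equals its characteristic polynomial; equivalently, each eigenvalue of $A$ corresponds to exactly one Jordan block, whose size equals its algebraic multiplicity. Consequently $\tilde{\kappa}$, the maximal Jordan block size of an eigenvalue of modulus one, equals the maximal algebraic multiplicity of such an eigenvalue, which by the preceding paragraph equals $\kappa$.

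Combining the inequalities $l^{\kappa+1} \preceq \Delta_h(l) \preceq l^{\tilde{\kappa}+1}$ with $\kappa = \tilde{\kappa}$ finishes the proof (when $h$ has no roots of modulus one, both sides degenerate and one only needs the upper bound, which gives $\Delta_h(l) \preceq l$, hence $\Delta_h(l) \approx l = l^{0+1}$ since the identity embedding in $\Delta_h$ already contributes a linear term). The main obstacle is really only the book-keeping in identifying $\kappa$ with $\tilde{\kappa}$; once the companion-matrix computation is laid out, the rest is a direct consequence of the two preceding lemmas.
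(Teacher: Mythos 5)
Your proposal is correct and follows essentially the same route as the paper: lower bound from Lemma \ref{hnc}, upper bound from Lemma \ref{cz}, and identification of the exponents via the characteristic polynomial $x^t h(x^{-1})/d_0$ of the companion matrix $A$ from Equation \eqref{8u}. The only (harmless) difference is that you establish the exact equality $\tilde{\kappa}=\kappa$ using that a companion matrix is nonderogatory, whereas the paper gets by with the weaker observation that a Jordan block size never exceeds the algebraic multiplicity of the corresponding root.
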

   
   \begin{proof}
   On the one hand, Lemma \ref{hnc} shows that the distortion is bounded from below by the 
   polynomial of degree one plus the maximal multiplicity $\kappa$ of a root of $h(x)$ having modulus one. 
   %We will show that the upper bounds of one greater than the maximal size of a Jordan block %of the Jordan form of $A$ with eigenvalue having modulus one obtained in Lemma \ref{cz} are %the same as these lower bounds.
On the other hand, the characteristic polynomial $\chi(x)$ of the matrix $A$ in Lemma \ref{cz} equals $x^t+\frac{d_1}{d_0}x^{t-1}+\cdots+\frac{d_{t-1}}{d_0}x+\frac{d_t}{d_0}=x^th(x^{-1})/d_0$.
And so the real polynomials $\chi(x)$ and $h(x)$ have the same roots with modulus $1$ (and with the
same multiplicities). Since the size of a Jordan block does not exceed
the multiplicity of the root of the characteristic polynomial, we have $\Delta_h(l) \preceq l^{\kappa +1}$ by Lemma \ref{cz}.  The theorem is proved.

%Therefore, $h(x)/d_t$ is the characteristic polynomial for $A^{-1}$. Therefore the size of a %Jordan block of
%the Jordan form of $A^{-1}$ with If $c$ is an eigenvalue of $A$ with modulus one, then %$c^{-1}$ is an eigenvalue of $A^{-1}$ also having modulus one. Therefore any Jordan block for %$A$ corresponding to an eigenvalue $c$ with modulus one has size which does not exceed the %multiplicity of the root $c^{-1}$ in the polynomial $h$. The roots $c$ and $c^{-1}$ where %$|c|=1$ have equal multiplicities in $h$.
\end{proof}

\begin{rem} Theorem \ref{tgs} will be used here for polynomials with integer coefficients,
but it is valid (with the same proof) for polynomials with complex or real coefficients. 
\end{rem}

Theorem \ref{tgs} and Lemma \ref{cz} imply the following.

\begin{cor} \label{exempl} The distortion of any exemplary subgroup $H$ of $\z\wre\z$  is equivalent to a polynomial. The degree of this polynomial is exactly one plus the maximal multiplicity of a (complex) root having modulus one of the polynomial $h(x)$ associated with $H$. 
\end{cor}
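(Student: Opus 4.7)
The plan is to obtain Corollary \ref{exempl} essentially by concatenating two equivalences already established in the paper. First I would recall the setup: an exemplary subgroup $H = \langle b, w \rangle \leq \z \wre \z$ with $w \in W \setminus \{1\}$ is (after conjugation by a suitable power of $b$) of the form $w = h(x)a$ for some $h(x) = d_0 + d_1 x + \cdots + d_t x^t \in \z[x]$ with $d_0, d_t \neq 0$, and this polynomial $h(x)$ is the one associated with $H$.

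Next, I would invoke Lemma \ref{hpu}, which translates the distortion of $H$ in $\z \wre \z$ into the distortion of the associated polynomial:
\[
\Delta_H^{\z \wre \z}(l) \approx \Delta_h(l).
\]
This step is really the bridge between the geometric/combinatorial side (word lengths in the wreath product) and the algebraic side (polynomial distortion from Definition \ref{pd}).

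Then I would apply Theorem \ref{tgs} directly to $h(x)$: it says that $\Delta_h(l)$ is equivalent to $l^{\kappa+1}$, where $\kappa$ is the maximal multiplicity of a complex root of $h(x)$ of modulus one. Combining this with the equivalence from Lemma \ref{hpu} yields
\[
\Delta_H^{\z \wre \z}(l) \approx \Delta_h(l) \approx l^{\kappa+1},
\]
which is exactly the statement of the corollary.

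There is no real obstacle here — this is a packaging step. The only thing to be mildly careful about is ensuring that the preliminary normalization of $w$ (assuming $d_0 \neq 0$, done by conjugation by a power of $b$ in the paragraph preceding Lemma \ref{xxx}) does not change the distortion class of $H$ in $\z \wre \z$; this is immediate since conjugation by $b$ is an inner automorphism of $\z \wre \z$ and preserves the subgroup $H$ up to conjugacy, which does not affect the distortion function up to equivalence. Once this mild check is in place, the corollary follows by a single line combining Lemma \ref{hpu} and Theorem \ref{tgs}.
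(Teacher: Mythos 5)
Your proposal is correct and follows essentially the same route as the paper: the corollary is obtained by combining the equivalence $\Delta_H^{\z\wre\z}(l)\approx\Delta_h(l)$ of Lemma \ref{hpu} with the computation of $\Delta_h(l)$ in Theorem \ref{tgs} (the paper cites Theorem \ref{tgs} together with Lemma \ref{cz}, but the substance is the same packaging step). Your remark about the normalization $d_0\neq 0$ via conjugation by a power of $b$ is a fine extra check and does not change the argument.
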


\section{Tame Subgroups}

For every $k\ge 1,$ the wreath product $\z\wre\z$ has subgroups $W \lambda \langle b^k\rangle$ isomorphic to $\z^k \wre \z,$ and so we are forced to study distortion in the groups $\z^k \wre \z$ 
even we are interested in $\z \wre \z$ only. Let $a_1,\dots,a_k; b$ be canonical generators of $\z^k \wre \z.$  If a subgroup $H$ of $G=\z^k \wre \z$ is generated by $b, w_1,\dots, w_k,$ where every
$w_i$ belongs to the normal closure $W_i$ of $a_i$ ($W_i$ = the submodule $\z[\langle b\rangle]a_i$
of $W$) then we say that $H$ is a {\it tame} subgroup of  $\z^k \wre \z.$

If $w_i\ne 1,$ then the subgroup $H_i$ is an exemplary subgroup of the wreath product
$G_i = W_i\lambda \langle b\rangle \cong \z \wre \z.$

%In this Subsection, we will be able to further reduce the study of distortion of special %subgroups to that of two generated subgroups of $\z \wre \z$.

%\begin{defn}\label{amy}
%We call a subgroup of $\z \wre \z$ generated by $b$ and $w=h(x)a \in W$ where $h(x) \in \z[x]$ %has nonzero constant term a ``tame'' subgroup. We will fix the notation that the polynomial %$h(x)=d_0+\cdots+d_tx^t$ where $d_0,d_t \ne 0$.
%\end{defn}

\begin{lemma}\label{lcc}
%Let $H \leq \z^r \wre \z = G$ be a  subgroup. Let $H_i=\gp \langle b, w_i \rangle$ and $G_i = %\gp \langle b, a_i \rangle$ for $i=1, \dots, k$. Then 
For the tame subgroup $H$, we have that $$\Delta_H^G(l) \approx \sum_{i=1}^k \Delta_{H_i}^{G_i}(l).$$ 
%Further, we may assume without loss of generality that each $H_i$ is a tame subgroup.
\end{lemma}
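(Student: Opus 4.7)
The plan is to prove the two inequalities $\Delta_H^G(l) \preceq \sum_{i=1}^k \Delta_{H_i}^{G_i}(l)$ and $\Delta_H^G(l) \succeq \sum_{i=1}^k \Delta_{H_i}^{G_i}(l)$ separately, using Lemma \ref{t5} throughout. First I would set up: since $W$ decomposes as the direct sum $W_1 \oplus \cdots \oplus W_k$ of $\z[\langle b \rangle]$-modules and each $w_i$ lies in $W_i$, the elements $w_1,\dots,w_k$ commute and are $\z[\langle b \rangle]$-independent, so $H=\langle b,w_1,\dots,w_k\rangle$ is itself isomorphic to $\z^k \wre \z$ via $a_i\mapsto w_i$. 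Consequently every $g\in H$ has a unique canonical form $g=(u_1+\cdots+u_k)b^n$ with $u_i=p_i(x)w_i\in H_i\cap W_i$ for Laurent polynomials $p_i$.

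For the upper bound, I would introduce the projections $\pi_i\colon G\to G_i$ defined on generators by $b\mapsto b$, $a_i\mapsto a_i$, and $a_j\mapsto 1$ for $j\ne i$; each is $1$-Lipschitz on the standard generators and carries $H$ onto $H_i$ (with $w_j\mapsto 1$ for $j\ne i$). For $g\in H$ with $|g|_G\leq l$ we have $\pi_i(g)=u_ib^n\in H_i$, hence $|u_ib^n|_{H_i}\leq \Delta_{H_i}^{G_i}(l)$; together with $|n|\leq |g|_G\leq l$ (obtained from the further projection $G\to\z$ killing the lamp group) this gives $|u_i|_{H_i}\leq \Delta_{H_i}^{G_i}(l)+l$. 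Writing $g=u_1u_2\cdots u_kb^n$ with each $u_i$ expressed as a word in $H_i$'s generators $b,w_i$ yields a word in $H$ of length at most $\sum_i |u_i|_{H_i}+|n|\leq \sum_i \Delta_{H_i}^{G_i}(l)+O(l)$, and the $O(l)$ is absorbed by the linear-from-below bound $\Delta_{H_i}^{G_i}(l)\succeq l$.

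For the lower bound, I would pick for each $i$ an extremal $g_i=u_ib^{n_i}\in H_i$ with $|g_i|_{G_i}\leq l$ and $|g_i|_{H_i}=\Delta_{H_i}^{G_i}(l)$, and form the single element $g=u_1u_2\cdots u_k\in H\cap W$ (dropping the $b^{n_i}$'s). Applying Lemma \ref{t5} in $G$ gives lamp norm $\sum_i S(p_ih_i)\leq \sum_i |g_i|_{G_i}\leq kl$; and since the bound $|g_i|_{G_i}\leq l$ also confines each support to an interval of radius $O(l)$, the union of supports has range $O(l)$, its movement term is $O(l)$, and hence $|g|_G=O(l)$. Applying Lemma \ref{t5} in $H$ gives $|g|_H\geq \sum_i S(p_i)$, while the same formula in $H_i$ gives $|g_i|_{H_i}\leq S(p_i)+O(l)$, so $S(p_i)\geq \Delta_{H_i}^{G_i}(l)-O(l)$. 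Summing produces $|g|_H\succeq \sum_i \Delta_{H_i}^{G_i}(l)$, which combined with $|g|_G=O(l)$ gives $\Delta_H^G(l)\succeq \sum_i \Delta_{H_i}^{G_i}(l)$.

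The main obstacle will be keeping the movement term of $g$ in $G$ under control when assembling the single element from the $k$ pieces $u_i$: a priori the supports of the different $u_i$'s could sit in disjoint intervals of $\z$, inflating the movement cost, but the uniform bound $|g_i|_{G_i}\leq l$ is exactly what confines each support into $[-l,l]$ so that the union has range $O(l)$. All remaining bookkeeping with the $b^n$-factor, the constant degrees of the $h_i$'s, and translations between lengths in the four groups $G,H,G_i,H_i$ is routine once the linear lower bound $\Delta_{H_i}^{G_i}(l)\succeq l$ is used to absorb additive $O(l)$ error terms.
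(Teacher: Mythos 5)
Your proof is correct, and the upper bound $\Delta_H^G \preceq \sum_i \Delta_{H_i}^{G_i}$ is essentially the paper's argument: decompose the lamp part of $g\in H$ along $W=W_1\oplus\cdots\oplus W_k$, use the coordinate retractions $\pi_i\colon G\to G_i$ (which are $1$-Lipschitz and carry $H$ onto $H_i$ precisely because $H$ is tame), and reassemble. The lower bound is where you diverge. The paper gets it softly: $H_i$ is a retract of $H$ and $G_i$ a retract of $G$, so by Lemma \ref{wkf} one has $\Delta_{H_i}^{G_i}(l)\preceq \Delta_{H_i}^{G}(l)\preceq \Delta_H^G(l)$ for each $i$, and since the sum of $k$ such functions is equivalent to their maximum, $\sum_i\Delta_{H_i}^{G_i}\preceq\Delta_H^G$ follows with no length computations at all. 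You instead build a single witness $g=u_1\cdots u_k\in H\cap W$ aggregating extremal elements from all $k$ factors and control its $G$-length via Lemma \ref{t5}, using that $|g_i|_{G_i}\le l$ confines each support to $[-l,l]$ and that $\deg h_i$ is a constant so the support of $p_i$ (not just of $p_ih_i$) is also $O(l)$ --- a point you correctly flag and which does hold since the extreme coefficients of $p_ih_i$ cannot cancel. Your route costs more bookkeeping but is sound, and it shows the slightly stronger fact that all $k$ distortions are realized simultaneously on one element; the paper's retract argument buys brevity by exploiting that, up to the equivalence $\approx$, the sum is just the maximum. One small caveat for both directions of your write-up: if some $w_i$ is trivial the isomorphism $H\cong\z^k\wre\z$ should be replaced by $\z^{k'}\wre\z$ with $k'$ the number of nontrivial $w_i$ (then $H_i=\langle b\rangle$ and $\Delta_{H_i}^{G_i}\approx l$); this degenerate case does not affect either inequality.
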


\begin{proof}
Observe that $H_i \hookrightarrow H$ is an undistorted embedding, due to that fact that $H_i$ is a retract of $H$ (and similarly for $G_i \hookrightarrow G$). Therefore, by Lemma \ref{wkf} we have that $$\Delta_{H_i}^{G_i}(l) \preceq \Delta_{H_i}^G(l) \preceq \Delta_H^G(l),$$ for every $i$, and therefore  $ \Delta_H^G(l)\succeq \sum_{i=1}^k  \Delta_{H_i}^{G_i}(l).$ 

To prove the other inequality, we 
%will apply Lemma \ref{r2}. 
consider an element $u=vb^t\in H$ with $|u|_G\le l.$  Then there is a unique decomposition
$v=v_1+\dots +v_k,$ where
$v_i\in H_i,$ and for $u_i=v_ib^t,$ we have $u_i\in H_i$ since $H$ is tame. Then we have $|u_i|_{G_i}
\le |u|_G\le l$ since $G_i$ is a retract of $G.$    Therefore the required inequality
will follow from the inequality $|u|_H\le \sum_i |u_i|_{H_i}.$
This inequality is true indeed by Theorem \ref{t6} because $\trace_H(u)\le \sum_i \trace _{H_i}(u_i)$ since $\supp_H(u)\subset \cup_i \supp_{H_i} (u_i),$ and $||v||_H \le 
\sum_i ||v_i||_{H_i}$ since $H$ is a tame subgroup of $G.$

%We know, using the notation of Lemma \ref{r2}, that $$\Delta_H^G(l) \approx \delta(l) =  \max %\{e_H(g): g \in H \cap W, e(g) \leq l \textrm{ and } u(g) \leq l\}= %\sum_{i=1}^k\sum_{q=s_i}^{s_i+p_i}|z_q|$$ for some $g \in H \cap W$ which is equal to %$\sum_{i=1}^kf_i(x) w_i \textrm{ and } f_i(x)=\sum_{q=s_i}^{s_i+p_i}z_qx^q.$ Fix $i \leq k$. %Let $g_i(x)=f_i(x)r_i(x)$, where $w_i=r_i(x)a_i$, where $$r_i(x)=\sum_{j=0}^{t_i}d_{i,j}x^j %\textrm{ and } g_i(x)=\sum_{j=s_i}^{s_i+p_i+t_i}y_{i,j}x^j.$$ Then %$\sum_{j=s_i}^{s_i+p_i+t_i}|y_{i,j}| \leq \sum_{i=1}^k \sum_{j=s_i}^{s_i+p_i+t_i}|y_{i,j}| %\leq l$, by hypothesis that $g$ satisfies $e(g) \leq l$. It also follows that %$\max\{t_i+s_i+p_i,0\} \leq \max \{t_j+s_j+p_j,0\}_{j=1,\dots,k}$ and $\min\{s_i,0\} \geq \min %\{s_j,0\}_{j=1,\dots,k}$, and so $\max\{t_i+s_i+p_i,0\}-\min\{s_i,0\} \leq l$. Suppose that %$M=\sum_{q=s_i}^{s_i+p_i}|z_q|$. Then we have that $$\Delta_H^G(l) \approx \delta(l) \leq kM %\leq k\Delta_{H_i}^{G_i}(l) \leq k\max\{ \Delta_{H_i}^{G_i}(l)\}_{i=1,\dots,k}.$$ 
\end{proof}

\begin{cor} \label{tame} Every tame subgroup of $\z^k\wre\z$ has a polynomial
distortion.

\proof The statement follows from Corollary \ref{exempl} and Lemma \ref{lcc}. \endproof

\end{cor}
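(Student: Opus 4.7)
The plan is to combine the two cited results directly, with a brief check on the edge case where some of the $w_i$ are trivial. First, fix a tame subgroup $H=\langle b, w_1,\dots,w_k\rangle$ of $G=\z^k\wre\z$, with $w_i\in W_i$. For each index $i$ with $w_i\ne 1$, the subgroup $H_i=\langle b,w_i\rangle$ sits inside $G_i=W_i\lambda\langle b\rangle\cong\z\wre\z$, and by construction $H_i$ is an exemplary subgroup of $G_i$ in the sense defined earlier (the generator $b$ is the active generator of $G_i$, and $w_i$ is a nontrivial element of $W_i$, so up to conjugation by a power of $b$ we associate to $H_i$ a polynomial $h_i(x)\in\z[x]$ with nonzero constant term).

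Next, I would apply Corollary \ref{exempl} to each such $H_i$: this gives that $\Delta_{H_i}^{G_i}(l)$ is equivalent to a polynomial, say of degree $m_i=\kappa_i+1$, where $\kappa_i$ is the maximal multiplicity of a root of modulus one of the associated polynomial $h_i(x)$. For any index $i$ with $w_i=1$, the subgroup $H_i$ reduces to $\langle b\rangle$, which is a retract of $G_i$, so by Lemma \ref{wkf}(4) its distortion is linear, i.e.\ equivalent to $l^1$; this is still a polynomial and can be absorbed into the sum.

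Finally, I would invoke Lemma \ref{lcc} to obtain
\[
\Delta_H^G(l)\;\approx\;\sum_{i=1}^k \Delta_{H_i}^{G_i}(l).
\]
A finite sum of polynomials in $l$ is equivalent to a single polynomial, namely the one whose degree equals $\max_i m_i$. Hence $\Delta_H^G(l)\approx l^{m}$ with $m=\max_i m_i$, proving that the distortion of an arbitrary tame subgroup of $\z^k\wre\z$ is polynomial.

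There is essentially no obstacle here beyond the bookkeeping: the substantive work has already been done in Corollary \ref{exempl} (which supplies polynomial distortion in the exemplary case inside $\z\wre\z$) and Lemma \ref{lcc} (which reduces a tame subgroup of $\z^k\wre\z$ to the sum of the exemplary pieces). The only mild care needed is in handling the possibility that some $w_i$ is trivial, which is disposed of by the retract argument above.
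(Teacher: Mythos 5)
Your proposal is correct and follows exactly the paper's route: combine Lemma \ref{lcc}, which expresses $\Delta_H^G(l)$ as the sum $\sum_i \Delta_{H_i}^{G_i}(l)$, with Corollary \ref{exempl}, which gives polynomial distortion for each exemplary piece. The extra remark about indices with $w_i=1$ (where $H_i=\langle b\rangle$ is a retract and hence undistorted) is a harmless bit of bookkeeping that the paper leaves implicit.
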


\section{Some Modules}\label{mt}
To get rid of the word `tame' in the formulation of Lemma \ref{tame}, we will need few remarks about modules. 
The following is well known (see also \cite{fs}).

\begin{lemma}\label{r1}
The ring $F[\langle b \rangle]$ is a principal ideal domain if $F$ is a field.
\end{lemma}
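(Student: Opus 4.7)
The plan is to identify $F[\langle b \rangle]$ with the Laurent polynomial ring $F[x, x^{-1}]$ via $b \mapsto x$, and then prove the PID property directly using the Euclidean algorithm on $F[x]$, exploiting the fact that $x$ is a unit in $F[x, x^{-1}]$.

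First I would observe that any nonzero element of $R = F[x, x^{-1}]$ can be written uniquely as $x^s f(x)$ where $s \in \z$ and $f \in F[x]$ is a polynomial with nonzero constant term. Since $x$ and $x^{-1}$ are units in $R$, multiplying by a power of $x$ does not change the ideal an element generates. Consequently, up to associates, every nonzero element of $R$ is represented by an element of $F[x]$, and every ideal $I \subseteq R$ is generated (as an $R$-ideal) by $J := I \cap F[x]$, viewed as a subset of $R$.

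Next, given a nonzero ideal $I \lhd R$, I would pick $g \in J$ of minimal degree among nonzero elements of $J$, and show that $g$ generates $I$ in $R$. For any $f \in J$, apply the Euclidean algorithm in $F[x]$: write $f = qg + r$ with either $r = 0$ or $\deg r < \deg g$. Since $r = f - qg \in I \cap F[x] = J$, minimality forces $r = 0$, so $g \mid f$ in $F[x] \subseteq R$. Because $J$ generates $I$ over $R$, this shows $I = gR$.

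I do not expect any serious obstacle: the argument is essentially the standard proof that $F[x]$ is a PID, transported across the localization $F[x] \to F[x, x^{-1}]$. The only subtlety to be careful about is the bookkeeping that every Laurent polynomial is, up to a unit, an ordinary polynomial; once that is established, the Euclidean-division step runs inside $F[x]$ with no modification. An alternative, shorter route would be to cite the general fact that any localization of a PID at a multiplicative subset is again a PID, applied to the localization of $F[x]$ at the powers of $x$, but the direct Euclidean argument above is equally brief and entirely self-contained.
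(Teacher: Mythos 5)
Your proof is correct. Note that the paper itself does not argue this lemma at all: it simply records it as well known and points to the Farkas--Snider reference, so there is no internal proof to compare against. Your route --- identify $F[\langle b \rangle]$ with $F[x,x^{-1}]$, observe that every nonzero Laurent polynomial is a unit multiple $x^s f(x)$ of an ordinary polynomial, so that any ideal $I$ is generated over $R=F[x,x^{-1}]$ by $J=I\cap F[x]$, and then run division with remainder in $F[x]$ against a minimal-degree element of $J$ --- is a sound, self-contained substitute, and the alternative you mention (a localization of the PID $F[x]$ at the multiplicative set of powers of $x$ is again a PID) is equally valid and is essentially the standard citation-free justification. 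The only cosmetic omission is the word ``domain'': you should remark in passing that $F[x,x^{-1}]$ is an integral domain (being a subring of $F(x)$, or a localization of the domain $F[x]$), since the lemma asserts a principal ideal \emph{domain}; this is immediate and does not affect the substance of your argument.
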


\begin{lemma}\label{ccc}
Suppose that $\overline{W}$ is a submodule of a free module $\overline{V}$ of rank $k$ over a (commutative) principal ideal ring $R.$ Then $\overline V$ is  a free module of  rank $l \leq k$, and modules $\overline{V}$ and $\overline{W}$ have bases $e_1', \dots, e_l'$ and $f_1', \dots, f_k'$ respectively such that for some $u_i' \in R,$ $$e_i'=u_i'f_i', i=1, \dots, l$$ 
\end{lemma}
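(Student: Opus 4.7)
The plan is to prove this by induction on $k$, the rank of $\overline{V}$. This is the classical simultaneous basis (elementary divisors) theorem for modules over a PID, so I would follow the standard route but be careful only to extract what is actually claimed — in particular the divisibility chain $u_1' \mid u_2' \mid \cdots$ is not required, which makes the induction cleaner.

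First I would dispose of the freeness of $\overline{W}$ by a separate induction on $k$: projecting $\overline{V}$ onto one coordinate sends $\overline{W}$ to an ideal $(d)\subseteq R$; the kernel of this projection is free of rank $k-1$, so by induction $\overline{W} \cap \ker$ is free of rank $\le k-1$, and lifting a generator of $(d)$ (when $d\ne 0$) splits off one more free summand of rank $1$. Thus $\overline{W}$ is free of some rank $l\le k$.

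For the main statement I would proceed as follows. Consider the family of principal ideals $\{\phi(\overline{W}) : \phi \in \Hom_R(\overline{V},R)\}$. By the ascending chain condition in the PID $R$, choose $\phi_1$ so that $\phi_1(\overline{W}) = (u_1')$ is maximal in this family, and pick $e_1' \in \overline{W}$ with $\phi_1(e_1') = u_1'$. The crucial claim is:
\begin{equation*}
u_1' \text{ divides every coordinate of } e_1' \text{ in the standard basis of } \overline{V}.
\end{equation*}
To see this, let $\pi_i \colon \overline{V}\to R$ be the $i$-th coordinate projection, write $(\pi_i(e_1'), u_1') = (d)$, and pick $\alpha,\beta\in R$ with $\alpha \pi_i(e_1')+\beta u_1'=d$ by Bezout. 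Then the functional $\alpha\pi_i + \beta\phi_1$ sends $e_1'$ to $d$, so $\phi_1(\overline{W})\subseteq (d)$; maximality of $(u_1')$ forces $(d)=(u_1')$, i.e.\ $u_1'\mid \pi_i(e_1')$. Hence $e_1' = u_1' f_1'$ for some $f_1'\in \overline{V}$, and $\phi_1(f_1')=1$.

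The element $f_1'$ now gives two direct sum decompositions:
\begin{equation*}
\overline{V} = R f_1' \oplus \ker\phi_1, \qquad \overline{W} = R e_1' \oplus (\overline{W} \cap \ker\phi_1),
\end{equation*}
both of which are immediate from $\phi_1(f_1')=1$. Applying the inductive hypothesis to $\overline{W}\cap \ker\phi_1$ inside the rank-$(k-1)$ free module $\ker\phi_1$ yields aligned bases $e_2',\dots,e_l'$ and $f_2',\dots,f_k'$, which together with $e_1',f_1'$ give the desired bases of $\overline{W}$ and $\overline{V}$.

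The main obstacle is the Bezout/maximality step that produces $f_1'$ with $e_1'=u_1' f_1'$; once that lift exists, the splittings and the inductive step are mechanical.
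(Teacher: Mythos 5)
Your proof is correct, but there is nothing in the paper to compare it against: the authors state Lemma \ref{ccc} as a classical fact from the theory of modules over principal ideal domains (the stacked basis / elementary divisor theorem) and give no proof at all, relying on standard references. Your argument is the standard one: freeness of the submodule by induction on the rank via a coordinate projection, then the choice of a functional $\phi_1$ whose image $\phi_1(\overline{W})=(u_1')$ is maximal among $\{\phi(\overline{W}):\phi\in\Hom_R(\overline{V},R)\}$, the Bezout/maximality step showing $u_1'$ divides every coordinate of $e_1'$ so that $e_1'=u_1'f_1'$ with $\phi_1(f_1')=1$, the two splittings $\overline{V}=Rf_1'\oplus\ker\phi_1$ and $\overline{W}=Re_1'\oplus(\overline{W}\cap\ker\phi_1)$, and induction; dropping the divisibility chain $u_1'\mid u_2'\mid\cdots$ is harmless since neither the statement nor its later uses (Lemma \ref{ta}, Remark \ref{cb}) need it. Two small remarks. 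First, the lemma as printed garbles the roles of $\overline{V}$ and $\overline{W}$ (it is the submodule that is free of rank $l\le k$ with basis $e_1',\dots,e_l'$, the ambient module keeping the basis $f_1',\dots,f_k'$); you silently proved the intended statement, which is the one actually used later in the paper. Second, your argument uses that $R$ is a principal ideal \emph{domain} (Bezout, and cancelling $u_1'$ to get $\phi_1(f_1')=1$), whereas the lemma says ``principal ideal ring''; this is the right reading, since for non-domain principal ideal rings even the freeness claim fails, and the rings the paper applies the lemma to, $\q[\langle b\rangle]$ and $\z_p[\langle b\rangle]$, are PIDs.
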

%\begin{proof}
 
%The statement of Lemma \ref{ccc} is a result from module theory. It follows because by Lemma %\ref{r1} $\overline{W}$ is a free module over a prinicipal ideal ring with submodule %$\overline{V}$. See for instance, \cite{bourbaki}. 

%\end{proof}

At first we apply this statement to the following special case of Theorem \ref{x} Part $(2)$. 
\begin{lemma}\label{ta}
If $p$ is a prime, then any finitely generated subgroup $H$ of $G=\z_p^k \wre \z$ containing
the generator $b$ is undistorted.
\end{lemma}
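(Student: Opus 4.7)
The plan is to combine three ingredients: the elementary divisor theorem over the PID $R = \z_p[b,b^{-1}]$, the tame-subgroup argument behind Lemma \ref{lcc}, and the fact that $\z_p$ is a field.

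First, since $b \in H$, the subgroup $H \cap W$ is invariant under conjugation by $b$, and hence is an $R$-submodule of the free rank-$k$ $R$-module $W = R a_1 \oplus \cdots \oplus R a_k$. Because $R$ is a PID (Lemma \ref{r1}), Lemma \ref{ccc} furnishes a new $R$-basis $e_1, \dots, e_k$ of $W$ together with nonzero elements $u_1, \dots, u_l \in R$ (with $l \le k$) such that $u_1 e_1, \dots, u_l e_l$ form an $R$-basis of $H \cap W$. Setting $W_i := R e_i$, $G_i := W_i \rtimes \langle b\rangle \cong \z_p \wre \z$, and $H_i := R u_i e_i \rtimes \langle b\rangle \subseteq G_i$, we obtain a decomposition $W = \bigoplus_{i=1}^{k} W_i$ into $\langle b\rangle$-invariant summands relative to which $H$ is tame.

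Next I would apply the proof of Lemma \ref{lcc} essentially verbatim. The $R$-module projections $W \to W_i$ extend to retractions $G \to G_i$ and restrict to retractions $H \to H_i$; combining this with Theorem \ref{t6}, the support containment $\supp_H u \subseteq \bigcup_i \supp_{H_i} u_i$, and the triangle-type bound $\|v\|_H \le \sum_i \|v_i\|_{H_i}$ yields
\[
\Delta_H^G(l) \;\approx\; \sum_{i=1}^{l} \Delta_{H_i}^{G_i}(l).
\]
Nothing in that argument is sensitive to the fiber being $\z$ rather than $\z_p$.

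Finally I would show each $H_i$ is undistorted in $G_i$. Conjugating by a suitable power of $b$, write $u_i = h(b)$ with $h = d_0 + d_1 b + \cdots + d_t b^t \in \z_p[b]$ and $d_0 d_t \ne 0$. A general element of $H_i \cap W_i$ has the form $v = f(b) h(b) e_i$ where $f = z_s b^s + \cdots + z_{s+p} b^{s+p}$ with $z_s z_{s+p} \ne 0$. Because $\z_p$ is a field, the extremal coefficients $z_s d_0$ and $z_{s+p} d_t$ of the product $fh$ are nonzero, so the width of $\supp(fh)$ equals the width of $\supp(f)$ plus $t$. Together with the trivial bound (number of nonzero terms of $f$) $\le$ (width of $\supp f) + 1$ and the length formula of Theorem \ref{t6}, this gives $|v b^n|_{H_i} \le C(h) \cdot |v b^n|_{G_i}$, yielding $\Delta_{H_i}^{G_i}(l) \approx l$; substituting in the previous step gives $\Delta_H^G(l) \approx l$.

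The hardest step is the last one: the analogous statement over $\z$ is \emph{false} --- Theorem \ref{tgs} exhibits nontrivial polynomial distortion for exemplary subgroups of $\z \wre \z$ whenever $h$ has a root of modulus one. The obstruction there is exactly integer cancellation that can shrink the width of $\supp(fh)$ below that of $\supp(f)$. Over a field such cancellation at the extremal coefficients is impossible, and this is precisely what collapses the exemplary distortion down to linear and, via the earlier two steps, yields undistortion of $H$.
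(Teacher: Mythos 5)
Your proof is correct, but it takes a genuinely different route from the paper's. The first step is shared: since $b\in H$, the intersection $V=H\cap W$ is a submodule of the free module $W$ over $R=\z_p[\langle b\rangle]$, which is a PID because $p$ is prime (Lemma \ref{r1}), and Lemma \ref{ccc} produces a basis $e_1,\dots,e_k$ of $W$ and nonzero $u_1,\dots,u_l\in R$ such that $u_1e_1,\dots,u_le_l$ is a basis of $V$. From there the paper makes no metric estimate at all: it takes the retract $K=\langle b,e_1,\dots,e_l\rangle\cong\z_p^l\wre\z$ of $G$ and observes that $H$ has finite index in $K$, because each cyclic module $Re_i/Ru_ie_i\cong R/(u_i)$ is finite of order at most $p^{\deg u_i}$ (this is where finiteness of $\z_p$ enters) and $b\in H$; undistortion is then immediate from Lemma \ref{wkf}. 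You instead run the tame-subgroup reduction of Lemma \ref{lcc} over the fiber $\z_p$ (which does transfer verbatim) and then verify directly that each exemplary subgroup $H_i=\langle b,u_ie_i\rangle$ of $G_i\cong\z_p\wre\z$ is undistorted, using the length formula and the fact that over a field the extremal coefficients of $fh$ cannot vanish, so the support of $fh$ is at least as wide as that of $f$ (and lies within $\deg h$ of it), while the coefficient contribution to the length in Lemma \ref{t5} is at most a constant depending on $p$ times the size of the support. Your route is longer but more informative: it is the exact finite-field analogue of the exemplary-subgroup analysis and pinpoints why the distortion of Theorem \ref{tgs} collapses to linear; the paper's route is shorter, purely algebraic, and needs only the finiteness of the quotients $R/(u_i)$.

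One correction to your closing diagnosis of the integer case: over $\z$ the width of $\supp(fh)$ can likewise never drop below that of $\supp(f)$, because $\z$ is an integral domain and the extremal coefficients of $fh$ are products of extremal coefficients of $f$ and $h$. The distortion in $\z\wre\z$ comes not from a shrinking support but from the coefficient sum $S(fh)$, which interior cancellation can make far smaller than $S(f)$: for $h=1-x$ and $f=1+x+\cdots+x^{l-1}$ one has $S(f)=l$ but $S(fh)=2$, with no loss of width. Over $\z_p$ this phenomenon is harmless precisely because $S$ is comparable to the number of support points, which is what your estimate actually uses; so your proof stands, and only the stated obstruction needs rephrasing.
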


\begin{proof} 
%One may assume that the subgroup $H$ is infinite, so by Lemma \ref{r5} one may assume that $H$ %is a subgroup of  $L=\z_p^l \wre \z=W\lambda \langle b\rangle$ ($l=kt$) with $b.$ 

By Lemma \ref{wkf}, it suffices to show that $H$ has finite index in a retract $K$ of  $G.$

Since $p$ is a prime,  $\z_p$ is a field. This implies by Lemma \ref{r1}, that the ring $R=\z_p[\langle b \rangle]$ is a principal ideal ring.

Let $V=H\cap W.$ Then $V$ is a free $R$-module by Lemma \ref{ccc}, and we have that $V$ and $W$ have bases $e_1, \dots, e_m$ and $f_1, \dots, f_k$ respectively, for $m \leq k$ such that 
\begin{equation}\label{xc}
e_i=g_if_i, i=1, \dots, m
\end{equation}
for some polynomials $g_i \in R\backslash 0$. Thus we can choose the generators for $G$ and $H$ to be $\{b, f_1, \dots, f_l \}$ and $\{b, e_1, \dots, e_m\}$, respectively, and $H$ is a subgroup
of the retract $K$ of $G$, where $K$ is isomorphic to $\z_p^m \wre \z$ and is generated by
$\{b, f_1, \dots, f_m \}.$ Now $V$ is a submodule of the $\z_p[\langle b\rangle]$-module
$W'$ generated by $\{f_1, \dots, f_m \},$ and the factor-module $W'/V$ is a direct sum
of cyclic modules  $\langle f_i \rangle/\langle g_if_i\rangle.$ Hence $W'/V$
is finite since it is easy to see that each $\langle f_i \rangle/\langle g_if_i\rangle$ 
has finite order at most $p^{\deg g_i}.$ Since the subgroup $H$ contains $b,$ the index 
of $H$ in $K$ is also finite. 

\end{proof}

We return to our discussion of module theory. Let $H \leq \z^k \wre \z$ be generated by $b$, as well as any elements $w_1, \dots, w_k \in W$. Let $V$ be the normal closure of $w_1, \dots, w_k$ in $\z^r \wre \z;$ i.e., the $\z[\langle b\rangle]$-submodule of $W$ generated by $w_1,\dots, w_k.$ Let $\overline{V}= V \otimes_{\z} \q$ and $\overline{W} = W \otimes_{\z} \q$. Observe $\overline{W}$ and $\overline{V}$ are free modules over $\qb$ of respective ranks $k$ and $l \leq k$.

\begin{rem}\label{cb}
It follows from Lemma \ref{ccc} that there exist $0<m,n \in \z$ with $(me_i')=u_i(nf_i')$ where $e_i=me_i' \in V, f_i=nf_i' \in W, u_i \in \zb$. Moreover, the modules generated by $\{e_1, \dots, e_l\}$ and $\{f_1, \dots, f_k\}$ are free.
\end{rem}

\begin{rem}\label{ca}
There is a bijective correspondence between the set of finitely generated $\zb$ submodules $N$ of $\zb^k$ and the set of subgroups $K=N \lambda\langle b \rangle$ of $\z^k \wre \z$ such that the finite set of generators of $K$ is of the form $b, w_1, \dots, w_k$, $w_i \in W$. 
\end{rem}

\begin{rem}\label{bfz}
Let $V_1$ and $W_1$ be generated as submodules over $\zb$ by the elements from Remark \ref{cb}: $e_1, \dots, e_l$ and $f_1, \dots, f_k$ respectively. Let $H_1$ and $G_1$ be subgroups of $\z^r \wre \z$ generated by $\{b, V_1\}$ and $\{b, W_1\}$ respectively. It follows by Remark \ref{cb} that that $G_1 \cong \z^k \wre \z$ and $H_1 \cong \z^l \wre \z$. 

\end{rem}

\begin{rem}\label{bc}
Observe that under the correspondence of Remark \ref{ca} each generator $e_i$ of the group $H_1$ is in the normal closure of only one generator $f_i$ of $G_1$, i.e., $H_1$ is
a tame subgroup of $G_1.$
%That is, for each $i$, $e_i=u_if_i$ for $u_i \in \zb$ means that there exist expressions %$e_i=g_i(x)f_i$ where $g_i(x)=\sum_{p=1}^{t_i}n_{i,p}x^{j_{i,p}}.$
\end{rem}

%\begin{defn}\label{special}
%We will call subgroups of $\z^k \wre \z$ 
%or $\z_p^r \wre \z$ 
%generated by $b$ and $w_i$ from different submodules $\zb a_i$ or $\z_p[\langle b \rangle]a_i$ %``special''.
%\end{defn}

\begin{lemma}\label{ba}
There exists $0<n', m' \in \mathbb{N}$ so that $n'W \subset W_1 \subset W,$ and $m'V \subset V_1 \subset V.$
\end{lemma}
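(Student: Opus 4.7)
The plan is to use Remark~\ref{cb} for the two easy inclusions $W_1 \subset W$ and $V_1 \subset V$, and then to establish the opposite containments (up to an integer scalar) by a clearing-of-denominators argument inside the rational modules $\overline{W}$ and $\overline{V}$. First I would record that the inclusions $W_1 \subset W$ and $V_1 \subset V$ are immediate from Remark~\ref{cb}: the generators $f_i = nf_i'$ of $W_1$ and $e_i = me_i'$ of $V_1$ lie in $W$ and $V$ respectively. Moreover, since $f_1', \dots, f_k'$ is a $\qb$-basis of $\overline{W}$ and each $f_i$ differs from $f_i'$ only by the nonzero integer scalar $n$, the set $\{f_1, \dots, f_k\}$ is itself a $\qb$-basis of $\overline{W}$; for the same reason $\{e_1, \dots, e_l\}$ is a $\qb$-basis of $\overline{V}$.

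To produce $n'$, I would expand the canonical $\zb$-basis $a_1, \dots, a_k$ of $W$ in the rational basis $\{f_1, \dots, f_k\}$ of $\overline{W}$, writing $a_j = \sum_{i=1}^k \alpha_{ij} f_i$ with $\alpha_{ij} \in \qb$. Since each $\alpha_{ij}$ is a Laurent polynomial in $b$ with rational coefficients, there is a single positive integer $N$ clearing all their denominators at once, so that $N\alpha_{ij} \in \zb$ for every pair $(i,j)$. Then $Na_j = \sum_i (N\alpha_{ij}) f_i \in W_1$ for each $j$, and since the $a_j$ generate $W$ as a $\zb$-module this forces $NW \subset W_1$; taking $n' := N$ settles the first half. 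The construction of $m'$ is entirely parallel: expand the $\zb$-generators $w_1, \dots, w_k$ of $V$ in the $\qb$-basis $\{e_1, \dots, e_l\}$ of $\overline{V}$, clear denominators to obtain an $M \in \mathbb{N}$ with $Mw_j \in V_1$ for all $j$, and set $m' := M$.

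The only point that deserves explicit verification is the legitimacy of ``clearing denominators'' inside $\qb$: since $\qb \cong \q \otimes_\z \zb$, every element of $\qb$ is plainly an element of $\zb$ divided by a positive integer, so finitely many such expressions admit a common integer denominator. For this reason I do not anticipate any substantive obstacle; the argument is really just unpacking the module-theoretic content of Remark~\ref{cb} together with the fact that $\overline{V} = V \otimes_\z \q$ and $\overline{W} = W \otimes_\z \q$ are faithful rational tensor extensions of $V$ and $W$.
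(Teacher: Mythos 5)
Your proposal is correct and follows essentially the same route as the paper: both arguments expand the $\zb$-generators of $V$ (resp.\ $W$) in the rational bases furnished by Lemma \ref{ccc}/Remark \ref{cb} and clear the integer denominators of the $\qb$-coefficients to get a single integer multiple landing in $V_1$ (resp.\ $W_1$). The only difference is cosmetic — the paper clears denominators in two stages (first the factor $m$ passing from $e_i'$ to $e_i$, then the $M_i$), while you do it in one step directly against the basis $\{e_1,\dots,e_l\}$.
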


\begin{proof}
By Remark \ref{ca} we have that $V$ is a finitely generated $\zb$ module with generators $w_1, \dots, w_k$. For each $w_i$, we have that the element $w_i \otimes 1 \in \overline{V}$. Therefore, by Lemma \ref{ccc}, there are $\lambda_{i,j} \in \qb$ so that $w_i=\sum_{j=1}^{l}\lambda_{i,j}e_j'.$ First observe that $mw_i=\sum_{j=1}^{l}\lambda_{i,j}e_j,$ because $e_i=me_i' \in V$.

Next, there exists $M_i \in \mathbb{N}$ so that $M_imw_i = \sum_{j=1}^{l} \mu_{i,j}e_j \in V_1$ where $\mu_{i,j} \in \zb$. Let $m'=M_1 \dots M_k m.$ Then for any $v \in V$, we have that $v=\sum_{i=1}^kv_iw_i$ where $v_i \in \zb,$ and therefore, $m'v \in V_1$ as required. A similar argument works for obtaining $n'$.
\end{proof}

\begin{lemma}\label{ab}
Let $\z^k \wre \z=G = W \lambda \langle b \rangle$ and let $K= \langle \langle w_1, \dots, w_s \rangle \rangle^{G} \leq G$ be the normal closure of elements $w_i \in W$. Suppose that there exists $n \in \mathbb{N}$ and a finitely generated subgroup $K' \leq K$ so that $nK \leq K'.$ Then $$\Delta_{\langle b, K' \rangle}^G(l) \approx \Delta_{\langle b, K \rangle}^G(l).$$
\end{lemma}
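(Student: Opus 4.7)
The plan is to establish that $[H:H'] < \infty$, which together with Lemma \ref{wkf} will immediately give the distortion equivalence. The hypothesis hides an important restriction: since $W = \zb^k$ is torsion-free, multiplication by $n$ injects $K$ into $nK \leq K'$, so $K$ must itself be finitely generated as an abelian group, being isomorphic to a subgroup of the f.g.\ abelian group $K'$.

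Let $K''$ denote the $\zb$-submodule of $W$ generated by $K'$; equivalently, since $W$ is abelian and $b$ generates $H/W$, $K''$ is the normal closure of $K'$ in $H$. By collecting powers of $b$ on the right in any word on $\{b^{\pm 1}\} \cup K'$ (using $bkb^{-1} = b \circ k$ together with the abelianness of $W$), every element of $H' = \langle b, K'\rangle$ has the form $wb^t$ with $w \in K''$, and conversely every such element lies in $H'$. Hence $H' = K'' \lambda \langle b\rangle$ and $[H:H'] = [K:K'']$. Because $nK \leq K' \leq K''$, the quotient $K/K''$ is a finitely generated abelian group of exponent dividing $n$, hence finite.

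The conversion from finite index to distortion equivalence is then routine. Lemma \ref{wkf}(1) yields $\Delta_{H'}^H(l) \approx l$, and Lemma \ref{wkf}(3) gives $\Delta_{H'}^G(l) \preceq \Delta_H^G(\Delta_{H'}^H(l)) \preceq \Delta_H^G(l)$. For the reverse inequality $\Delta_H^G \preceq \Delta_{H'}^G$, a standard coset-representative argument suffices: pick representatives $h_1, \dots, h_m$ for $H/H'$ and factor each $u \in H$ as $u = u' h_i$ with $u' \in H'$, so that $|u'|_G$ and $|u'|_{H'}$ differ from $|u|_G$ and $|u|_H$ by uniformly bounded additive constants. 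I expect the only real subtlety to be the initial observation that the hypothesis $nK \leq K'$ with $K'$ finitely generated as a group secretly forces $K$ itself to be finitely generated as an abelian group; without this, $K/K''$ could a priori be infinite and the finite-index argument would collapse.
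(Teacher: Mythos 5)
There is a genuine gap here, and it is exactly the one you flagged at the end: your entire argument runs through the claim that $[\langle b,K\rangle:\langle b,K'\rangle]<\infty$, and that claim fails in the situation the lemma is actually designed for. You read ``finitely generated subgroup $K'$'' as finitely generated \emph{abelian group}. But under that reading the hypothesis forces $K=\{1\}$: if some $w_i\neq 1$, then $K$ contains the cyclic module $\zb w_i\cong\zb$, which has infinite rank as an abelian group, and since $W$ is torsion-free $nK\cong K$ could not then embed into a finitely generated abelian $K'$. So your proof, while internally consistent, only covers a vacuous case. In the paper the lemma is applied (in Lemma \ref{addm}, via Lemma \ref{ba}) with $K=V$ and $K'=V_1$, where $V_1$ is finitely generated as a $\zb$-\emph{module}, not as a group, and there the index is typically infinite: for instance in $\z\wre\z$ take $K=W$ and $K'=nW$ (the module generated by $na$); then $nK\le K'$, but $K/K'\cong\bigoplus_{\z}\z/n\z$ is infinite, so $\langle b,K'\rangle$ has infinite index in $\langle b,K\rangle$ and the finite-index machinery, including your ``$K/K''$ is finitely generated of exponent dividing $n$'' step, collapses.

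The paper's proof avoids index considerations altogether. It uses the injective endomorphism $\phi\colon G\to G$, $b\mapsto b$, $w\mapsto nw$ for $w\in W$, which by the length formula of Lemma \ref{t5} satisfies $|g|_G\le|\phi(g)|_G\le n|g|_G$ and restricts to an isomorphism of $K_1=\langle b,K\rangle$ onto $K_1''=\langle b,nK\rangle$. From the sandwich $K_1''\le K_1'\le K_1\overset{\phi}{\hookrightarrow}K_1''$ (with $K_1'=\langle b,K'\rangle$ finitely generated because $K'$ is a finitely generated module) one compares generating sets to get constants $k,k'$ with $|x|_{K_1}\le k'|x|_{K_1'}$ on $K_1'$ and $|x|_{K_1'}\le k|x|_{K_1''}$ on $K_1''$, and then chains the three distortion functions: $\Delta_{K_1''}^G\preceq\Delta_{K_1}^G\preceq\Delta_{K_1'}^G\preceq\Delta_{K_1''}^G$. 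The missing ingredient in your write-up is precisely this observation that multiplication by $n$ is a bi-Lipschitz self-embedding of $G$ carrying $\langle b,K\rangle$ into $\langle b,K'\rangle$; finite index cannot do the job.
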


\begin{proof}
We will use the notation that $K_1=\gp \langle K, b \rangle, K_1' = \gp \langle K', b \rangle, K_1'' = \gp \langle nK, b \rangle.$
Observe that the mapping $\phi: G \rightarrow G: b \rightarrow b, w \rightarrow nw \textrm{ for } w \in W$ is an injective homomorphism which restricts to an isomorphism $K_1 \rightarrow K_1''$. An easy computation which uses Lemma \ref{t5} and the definition of $\phi$ shows that for any $g \in K_1$, we have that 
\begin{equation}\label{e1}
|g|_{G} \leq |\phi(g)|_{G} \leq n|g|_{G}
\end{equation} where the lengths are computed in $G$ with respect to the usual generating set $\{a_1, \dots, a_k,b\}$. 

Observe that under the map $\phi$ we have that 
\begin{equation}\label{e2}
\textrm{for } x \in K_1, |x|_{K_1}= |\phi(x)|_{K_1''},
\end{equation}
where the lengths in $K_1''$ are computed with respect to the images under $\phi$ of a fixed generating set of $K_1$.

By their definitions, we have the embeddings 
\begin{equation}\label{ac}
K_1'' \leq K_1' \leq K_1 \overset{\phi}{\hookrightarrow} K_1''.
\end{equation}

By Equation (\ref{ac}) there exists $k'>0$ depending only on the chosen generating sets of $K_1$ and $K_1'$ so that 
\begin{equation}\label{e3}
\textrm{for any } x \in K_1', |x|_{K_1} \leq k'|x|_{K_1'}.
\end{equation}
It also follows by  Equation (\ref{ac}) that there exists a constant $k>0$ depending only on the chosen generating sets of $K_1''$ and $K_1'$ so that 
\begin{equation}\label{xa}
\textrm{for any } x \in K_1'', |x|_{K_1'} \leq k|x|_{K_1''}.
\end{equation}

First we show that $\Delta_{K_1''}^{G}(l) \preceq \Delta_{K_1}^{G}(l).$

Let $g \in K_1''$ be such that $|g|_{G} \leq l$ and $|g|_{K_1''}=\Delta_{K_1''}^{G}(l)$. Then there exists $g' \in K_1$ such that $\phi(g')=g$. Therefore, it follows that $\Delta_{K_1''}^{G}(l) = |g|_{K_1''} =|\phi(g')|_{K_1''}=|g'|_{K_1} \leq \Delta_{K_1}^{G}(l).$ The first and second equalities follow by definition, the third by Equation (\ref{e2}), and the inequality  is true because by Equation (\ref{e1}) we have that $|g'|_G \leq |\phi(g)|_G=|g|_G \leq l.$

We claim that $\Delta_{K_1}^G(l) \preceq \Delta_{K_1'}^G(l).$

Let $g \in K_1$ be such that $|g|_{K_1}=\Delta_{K_1}^G(l)$. Then $|g|_{K_1} \leq |\phi(g)|_{K_1} \leq k'|\phi(g)|_{K_1'} \leq k'\Delta_{K_1'}^G(nl),$ which follows from Equations (\ref{e1}), (\ref{e3}) and by definition.

On the other hand, we will show that $\Delta_{K_1'}^G(l) \preceq \Delta_{K_1''}^G(l).$
Let $g \in K_1'$ be such that $|g|_{K_1'}=\Delta_{K_1'}^G(l)$. Then $|g|_{K_1'} \leq |\phi(g)|_{K_1'} \leq k|\phi(g)|_{K_1''} \leq k\Delta_{K_1''}^G(nl),$ which follows from Equations (\ref{e1}), (\ref{xa}) and by definition.

Therefore, we have that $\Delta_{K_1}^G(l) \preceq \Delta_{K_1'}^G(l) \preceq \Delta_{K_1''}^G(l) \preceq \Delta_{K_1}^G(l).$
\end{proof}

We say that $H$ is a {\it subgroup with $b$} in a wreath product $A \wre \langle b\rangle$ if
$H=\langle b,w_1,\dots,w_s\rangle$ where $w_1,\dots, w_s\in W$.

\begin{lemma}\label{addm}
Let $H$ be a subgroup with $b$ in $G=\z^k \wre \z$. Then the distortion of $H$ in $\z^k \wre \z$ is equivalent to the distortion of a tame subgroup $H_1$ of a wreath product $G_1=\z^l \wre\z,$ $l\le k.$ 
%Recall by Definition \ref{special} that this means $H$ is generated by elements $b, w_1, %\dots, w_k$ where $k \leq r$, $\z^r \wre \z = \gp \langle b, a_1, \dots, a_r \rangle$ and each %$w_i$ is in the normal closure of one $a_i$ only.
\end{lemma}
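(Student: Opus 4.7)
The plan is to use the module-theoretic setup of Section \ref{mt} to produce candidates for $G_1$ and $H_1$, and then to combine two applications of Lemma \ref{ab} with a retraction argument to collapse the two distortion functions into one.

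To begin, I would write $H=\langle b,w_1,\dots,w_s\rangle$ with $w_i\in W$, let $V$ be the $\zb$-submodule of $W$ generated by the $w_i$ (so that $V$ is the normal closure of the $w_i$ in $G$), and invoke Remark \ref{cb} to produce $e_1,\dots,e_l\in V$ and $f_1,\dots,f_k\in W$ with $e_i=u_i f_i$, $u_i\in\zb$, so that $V_1=\zb\langle e_1,\dots,e_l\rangle$ and $W_1=\zb\langle f_1,\dots,f_k\rangle$ are free $\zb$-modules of ranks $l$ and $k$. Lemma \ref{ba} supplies positive integers $m',n'$ with $m'V\subseteq V_1\subseteq V$ and $n'W\subseteq W_1\subseteq W$. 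I then truncate to the $l$-dimensional part by setting $W_1'=\zb\langle f_1,\dots,f_l\rangle$ and
$$H_1=\langle b,V_1\rangle,\qquad G_1=\langle b,W_1'\rangle,\qquad \widehat G=\langle b,W_1\rangle.$$
By Remark \ref{bfz}, $\widehat G\cong\z^k\wre\z$ and $G_1\cong\z^l\wre\z$; and because each $e_i$ lies in $\zb f_i\subset W_1'$, $H_1$ is contained in $G_1$ as a tame subgroup.

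The first application of Lemma \ref{ab}, taken inside $G$ with $K=V$ and $K'=V_1$, yields $\Delta_H^G(l)\approx\Delta_{H_1}^G(l)$. The second, with $K=W$ (the normal closure in $G$ of the canonical generators of $\z^k$) and $K'=W_1$, yields
$$\Delta_{\widehat G}^G(l)\approx\Delta_G^G(l)\approx l,$$
since $\langle b,W\rangle=G$; thus $\widehat G$ is undistorted in $G$. Moreover the $\zb$-module $W_1$ splits as the internal direct sum $W_1'\oplus\zb\langle f_{l+1},\dots,f_k\rangle$, so the projection onto $W_1'$ extends to a group retraction $\widehat G\to G_1$, and Lemma \ref{wkf}(4) makes $G_1$ undistorted in $\widehat G$.

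With $H_1\le G_1\le\widehat G\le G$ and both $G_1\le\widehat G$ and $\widehat G\le G$ undistorted, two applications of Lemma \ref{wkf}(3), together with part (2), collapse the chain to
$$\Delta_{H_1}^{G_1}(l)\approx\Delta_{H_1}^{\widehat G}(l)\approx\Delta_{H_1}^G(l),$$
and chaining this with the first equivalence gives $\Delta_H^G(l)\approx\Delta_{H_1}^{G_1}(l)$, as required. The only point where I expect to have to tread carefully is checking that Lemma \ref{ab} genuinely applies with $K=W$, which is not finitely generated as an abelian group; but the proof of Lemma \ref{ab} really only requires that the enveloping subgroup $\langle b,K'\rangle$ of $G$ be finitely generated, and $\widehat G=\langle b,W_1\rangle$ is finitely generated by construction, so the step goes through.
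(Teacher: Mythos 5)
Your argument is correct and follows essentially the paper's own route: the same modules $V_1$ and $W_1$ from Remarks \ref{cb} and \ref{bfz}, Lemma \ref{ba} feeding two applications of Lemma \ref{ab} (one giving $\Delta_H^G(l)\approx\Delta_{H_1}^G(l)$, the other making $\langle b, W_1\rangle$ undistorted in $G$), and Lemma \ref{wkf} to collapse the chain $H_1\le G_1\le G$. Your additional retraction onto $\langle b,f_1,\dots,f_l\rangle$ is a harmless refinement that makes the ambient group literally $\z^l\wre\z$ as in the statement, and your caveat about Lemma \ref{ab} (its proof only needs the subgroup $\langle b,K'\rangle$ to be finitely generated) is precisely how the paper itself uses that lemma.
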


This follows from the results of Section \ref{mt}. Recall that the tame subgroup $H_1$ of the group $G_1$ was defined in Lemma \ref{bfz}, and these groups were associated to the given $H \leq G$. It follows from Lemmas \ref{ba} and \ref{ab} that  $$\Delta_{G_1}^{G}(l) \approx \Delta_{G}^{G}(l) \approx l \textrm{ and } \Delta_{H_1}^{G}(l) \approx \Delta_{H}^{G}(l),$$
and therefore $\Delta_{H_1}^{G_1}(l) \approx \Delta_{H}^{G}(l).$

\begin{cor}\label{withb} The distortion of every subgroup with $b$ in $\z^k \wre\z$ is polynomial.
\end{cor}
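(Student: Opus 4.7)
The plan is simply to chain together Lemma \ref{addm} and Corollary \ref{tame}, since essentially all the substantive work has already been done. Given a subgroup with $b$, namely $H=\langle b,w_1,\dots,w_s\rangle$ in $G=\z^k\wre\z$ with $w_i\in W$, I would invoke Lemma \ref{addm} first: this produces an integer $l\le k$, a wreath product $G_1=\z^l\wre\z$, and a tame subgroup $H_1\le G_1$ such that $\Delta_{H_1}^{G_1}(l)\approx \Delta_H^G(l)$.

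Next, I would apply Corollary \ref{tame} to $H_1\le G_1$, which asserts exactly that every tame subgroup of a wreath product $\z^l\wre\z$ has polynomial distortion. Combining these two facts gives $\Delta_H^G(l)\approx \Delta_{H_1}^{G_1}(l)\preceq$ a polynomial, which is the claim.

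There is no real obstacle left at this stage: the hard work was carried out earlier in the paper. Specifically, the reduction in Lemma \ref{addm} relies on the module-theoretic setup in Section \ref{mt} (Lemmas \ref{r1}, \ref{ccc}, \ref{ba}, \ref{ab}), which replaces the arbitrary submodule generated by $w_1,\dots,w_s$ with one having a compatible basis so that the resulting subgroup becomes tame after finite-index adjustments. Meanwhile, Corollary \ref{tame} reduces to the exemplary case via Lemma \ref{lcc}, and the exemplary case is polynomially bounded by Corollary \ref{exempl}, whose degree was pinned down by the polynomial distortion formula in Theorem \ref{tgs}. So my proof is just the two-line composition of Lemma \ref{addm} with Corollary \ref{tame}, and no new computation is required.
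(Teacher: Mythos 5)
Your proposal is correct and matches the paper's own proof, which likewise deduces the corollary directly by combining Lemma \ref{addm} (reduction of a subgroup with $b$ to a tame subgroup of some $\z^l\wre\z$ with equivalent distortion) with Corollary \ref{tame} (polynomial distortion of tame subgroups). No further argument is needed.
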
 

\proof This follows from Corollary \ref{tame} and Lemma \ref{addm}. \endproof

 \section{Distortion in  $A \wre \z$}\label{vvv}

 In this section, we will reduce distortion in subgroups of $A \wre \z$ where $A$ is finitely generated abelian to that in subgroups of $\z^k \wre \z$ only. 
 %By the previous section, we have reduced the problem of studying distortion in $\z^k \wre \z$ %to the study of special subgroups. 

%Here we recall some basic similarities and differences between the groups $\z_n^k \wre \z$ and %$\z^k \wre \z$. Let $G=\z_n^k \wre \z$, for $n \geq 2, k \geq 1$. 

%\begin{lemma}\label{cpk}
%For any $g \in G$, the following function $\Phi: G \rightarrow \mathbb{N}$ is equivalent to %the length in $G$. Using the notation of Remark \ref{yy}, we have that %$$\Phi(g)=|t|+\epsilon_M+\iota_N \approx |g|_G.$$
%\end{lemma}

%\begin{proof}
%First let $g \in G$ have normal form as in the statement of Lemma \ref{yy}. Then by Lemma %\ref{t5} it follows that $$|g|_G \leq (N+M)(n-1)+2(\iota_N+\varepsilon_M)+|t| \leq %(\iota_N+1+\varepsilon_M)(n-1)+2(\iota_N+\varepsilon_M)+|t|$$ $$\leq %(n+1)(\iota_N+\epsilon_M)+|t|+(n-1) \leq C\Phi(g)+C,$$ where $C=n+1$. The computations follow %from the definitions, as well as the fact that $\varepsilon_M \geq M, \iota_N \geq N-1$ and %the length in $\z_n^k$ of each $u_i, v_j$ is bounded from above by $n-1$. On the other hand, %observe that $|g|_G \geq \max \{|t|, \iota_N+\varepsilon_M\}.$ Therefore, $2|g|_G \geq %\Phi(g),$ so the two functions are equivalent.
%\end{proof}

\begin{lemma}\label{grga}
Let $A$ be a finitely generated abelian group and consider $G=A \wre \z=A \wre \langle b \rangle.$ Assume that $k$ is the torsion-free rank of $a.$ If $H$ is a subgroup with $b$ in $G$ then the distortion of $H$ in $G$ is equivalent to that of a subgroup with $b$ in $\z^k \wre \z$.
\end{lemma}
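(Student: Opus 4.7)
Decompose $A = \z^k \oplus T$ with $T$ the (finite) torsion part of $A$, and let $\pi : G = A\wre\z \twoheadrightarrow G' := \z^k\wre\z$ be the natural projection induced by $A\twoheadrightarrow A/T \cong \z^k$; its kernel is $T^{(\z)}$. Set $H' := \pi(H) = \langle b, \pi(w_1),\dots,\pi(w_s)\rangle$, which is plainly a subgroup with $b$ in $G'$. The plan is to prove $\Delta_H^G(l) \approx \Delta_{H'}^{G'}(l)$.

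I would first record three preparatory facts via Theorem \ref{t6}. The map $\pi$ is $1$-Lipschitz on the standard generators, and the natural section $s:G'\to G$ (induced by $\z^k\hookrightarrow A$) is isometric and realises $s(G')$ as a retract of $G$ under $s\circ\pi$. Setting $n:=|T|$, the scaled embedding $\iota:G'\to G$, $w'b^t \mapsto (nw')b^t$, is bi-Lipschitz with ratio $n$, and Lemma \ref{ab} applied inside $G'$ (with $K$ the normal closure of the generators and $K'=nK$) shows $\iota(G')$ is undistorted in $s(G')$, hence in $G$. Finally, since $T$ is finite the $A$-lengths of torsion elements are uniformly bounded, so $T\wre\z$ itself embeds as an undistorted subgroup of $G$.

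Next, I analyse the torsion of $H$. The intersection $M := H\cap T^{(\z)}$ is a $\z[\langle b\rangle]$-submodule of the Noetherian module $H\cap W$ (generated by $w_1,\dots,w_s$), so $M$ is finitely generated as a module, say by $\tau_1,\dots,\tau_m$. Then $K := \langle b,\tau_1,\dots,\tau_m\rangle$ is a finitely generated subgroup with $b$ of $T\wre\z$, contained in $H$. The key technical step is to show that $K$ is undistorted in $G$; since $T\wre\z$ is undistorted in $G$, this reduces to undistortion of $K$ inside $T\wre\z$, which extends Lemma \ref{ta} from $\z_p^k\wre\z$ to $T\wre\z$ for general finite abelian $T$ via the primary decomposition $T=\bigoplus_p T_p$.

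With these in place, the upper bound $\Delta_H^G\preceq\Delta_{H'}^{G'}$ runs as follows: given $u\in H$ with $|u|_G\le l$, put $u':=\pi(u)\in H'$, so $|u'|_{G'}\le l$ and $|u'|_{H'}\le\Delta_{H'}^{G'}(l)$. Lift a minimal $H'$-word for $u'$ word-by-word to $\hat u\in H$ with $|\hat u|_H=|u'|_{H'}$, and write $u=\hat u\tau$ with $\tau\in M\le K$. Then $|\tau|_G\le|u|_G+c\,|\hat u|_H\le l+c\,\Delta_{H'}^{G'}(l)$, and undistortion of $K$ in $G$ (hence in $H$) yields $|\tau|_H\le C|\tau|_G$, so $|u|_H\le|\hat u|_H+|\tau|_H\preceq l+\Delta_{H'}^{G'}(l) \approx \Delta_{H'}^{G'}(l)$.

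For the reverse inequality, set $H_1:=\iota(H')=\langle b,n\pi(w_1),\dots,n\pi(w_s)\rangle$; since $n\pi(w_i)=nw_i=w_i^n\in H$ (using $nT=0$), one has $H_1\le H\cap\iota(G')$, so $\Delta_{H_1}^G\approx\Delta_{H_1}^{\iota(G')}\approx\Delta_{H'}^{G'}$ via the bi-Lipschitz isomorphism $\iota:H'\to H_1$. By Lemma \ref{wkf}(3), $\Delta_{H_1}^G\preceq\Delta_H^G\circ\Delta_{H_1}^H$, so it remains to prove $\Delta_{H_1}^H$ is linear; this is an adaptation of Lemma \ref{ab} to the semidirect product $H=W_H\rtimes\langle b\rangle$, where the multiplication-by-$n$ map $\phi:H\to H$ is a homomorphism with image $H_1$ and kernel $M$, and the Lipschitz estimates of Lemma \ref{ab} go through once the kernel is controlled by the already-established undistortion of $K$. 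The principal obstacle throughout is precisely this combined extension of Lemma \ref{ta} and the proof-technique of Lemma \ref{ab} beyond the torsion-free setting, i.e.\ the analysis of the torsion module $M$ inside $H$; once this is accomplished, the rest of the argument reduces to the bookkeeping above.
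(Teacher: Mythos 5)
Your overall architecture differs from the paper's: you kill all the torsion at once via the projection $\pi:A\wre\z\to\z^k\wre\z$ and try to control the error term $M=H\cap T^{(\z)}$ directly, whereas the paper inducts along a composition series $A=A_0>A_1>\cdots>A_m\cong\z^k$ with quotients of prime order, at each step using Lemma \ref{ta} only for the field case $(A/A_1)\cong\z_p$ and pushing all remaining torsion into the inductive hypothesis for $A_1\wre\z$. Your reduction scheme (upper bound via lifting a short $H'$-word and absorbing the torsion discrepancy $\tau\in M$; lower bound via the scaled section $\iota$ and a variant of Lemma \ref{ab}) would be fine \emph{if} its key technical step were available, but as written it is not.

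The genuine gap is exactly that step: the claim that undistortion of the subgroup-with-$b$ $K=\langle b,\tau_1,\dots,\tau_m\rangle$ inside $T\wre\z$ ``extends Lemma \ref{ta} from $\z_p^k\wre\z$ to $T\wre\z$ \dots via the primary decomposition $T=\bigoplus_p T_p$.'' Primary decomposition only reduces to finite abelian $p$-groups, and these need not be elementary abelian, which is the only case Lemma \ref{ta} covers: for $T_p=\z_{p^2}$, say, the ring $\z_{p^2}[\langle b\rangle]$ is not a principal ideal domain (not even a domain), so the structure-theory argument of Lemma \ref{ta} does not apply, and its conclusion ``finite index in a retract'' actually fails --- e.g.\ $K=\langle b,2a\rangle\le\z_4\wre\z$ has infinite index in $\z_4\wre\z$ and its module $2\z_4[\langle b\rangle]a$ is not a direct summand, so $K$ lies in no proper retract with finite index. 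Undistortion of subgroups with $b$ in $T\wre\z$ for arbitrary finite $T$ is precisely the $k=0$ case of the statement you are proving (Theorem \ref{x}(2) restricted to such subgroups), and in the paper it is obtained only through the composition-series induction; invoking it here without a new proof makes the argument circular. The same unestablished control of $M$ is what your ``adaptation of Lemma \ref{ab}'' in the reverse inequality needs, since there the multiplication-by-$n$ map $\phi$ is no longer injective (its kernel is $M$, whose elements have bounded $A$-letters but unbounded support, hence unbounded length in $G$), so the length identities underlying Lemma \ref{ab} break down unless $M$-elements can be rewritten efficiently --- again the missing point. To repair the proposal you would have to prove the finite-$T$ undistortion statement first, e.g.\ by filtering each $T_p$ by $T_p>pT_p>p^2T_p>\cdots$ and inducting, which is in substance the paper's proof.
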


\begin{proof}
There exists a series of subgroups $$A=A_0 > A_1 > \cdots > A_m \cong \z^k$$ for $k \geq 0$ where $A_{i-1}/A_i$ has prime order for $i=1, \dots, m$. 

We induct on $m$. If $m=0$, then $A \cong \z^k$ and the claim holds.

Now let $m>0$. Observe that $A_1$ is a finitely generated abelian group with a series $A_1 > \cdots > A_m \cong \z^k$ of length $m-1$. Therefore, by induction, any  subgroup with $b$ in $G_2=A_1 \wre \z$ has distortion equivalent to that of a subgroup with $b$ in $\z^k \wre \z$, for some $k$. 

By Lemma \ref{ta}, all subgroups with $b$ of $G_1=(A/A_1) \wre \z$ are undistorted. 
Denote the natural homomorphism by $\phi: G \rightarrow G_1.$ Let $$U=\displaystyle\bigoplus_{\langle b \rangle}A_1=\ker(\phi).$$ Observe that $U \cdot \langle b \rangle \cong G_2.$ The product is semidirect because $U$ is a normal subgroup which meets $\langle b \rangle$ trivially, and it is isomorphic to the wreath product by definition: the action of $b$ on the module $\displaystyle\bigoplus_{\langle b \rangle} A_1$ is the same. 

%Let $H$ be a finitely generated subgroup in $G$. Suppose that $H$ is not contained in $W$. It %follows in this case by Lemmas \ref{r4} and Lemma \ref{r5} that $H$ is a subgroup with $b$. 

Let $R=\z[\langle b \rangle]$. Observe that $R$ is a Noetherian ring. This follows from basic algebra because $\z$ is a commutative Noetherian ring. Therefore, $W$ is a finitely generated module over the Noetherian ring $R$, hence is Noetherian itself. Thus, the $R$-submodule $H \cap U$ is finitely generated. 
Let $\{w_1', \dots, w_r'\}$ generate $H \cap U$ as a $R$-module. Let $\{b, w_1, \dots, w_s\}$ be a set of generators of $H$ modulo $U$; that is, the canonical images of these elements generate the subgroup $H_1=HU/U \cong H/H \cap U$ of $G_1$. Then the set $\{b, w_1, \dots, w_s, w_1', \dots, w_r'\}$ generates $H$. Furthermore, the collection $\{b, w_1', \dots, w_r'\}$ generates the subgroup $H_2= (H\cap U)\cdot \langle b \rangle$ of $G_2$.

Let $g \in H$ have $|g|_G \leq l$. Then the image $g_1=\phi(g)$ in $G_1$ belongs to $H_1$, because $g \in H$, and has length $|g_1|_{G_1} \leq l.$ 
%by Lemma \ref{cpk} and definition of $\phi$ and $G_1$. 
It follows by Lemma \ref{ta} that $H_1$ is undistorted in $G_1.$ Therefore, there exists a linear function $f: \mathbb{N} \rightarrow \mathbb{N}$ (which does not depend on the choice of $g$) such that $|g_1|_{H_1} \le f(l).$ That is to say, there exists a product $P$ of at most $f(l)$ of the chosen generators $\{b, w_1, \dots, w_s\}$ of $H_1$ such that $P=g_1^{-1}$ in $H_1$. Taking preimages, we obtain that $gP \in U$.

Because $H$ is a subgroup of $G$, there exists a constant $c$ depending only on the choice of finite generating set of $H$ such that for any $x \in H$ we have that
\begin{equation}\label{bx}
|x|_G \leq c|x|_H.
\end{equation}

It follows by Equation (\ref{bx}) that
\begin{equation}\label{xxc}
|gP|_G \leq |g|_G + |P|_G \leq |g|_G+c|P|_H \leq l+cf(l).
\end{equation}

Observe that $gP \in H_2.$ This follows because $gP \in U$ by construction, and $g \in H$ by choice. Further, $P \in H$ because it is a product of some of the generators of $H$. Since $H_2=(H \cap U)\cdot \langle b \rangle$ we see that $gP \in H_2$. Using the fact that $G$ and $G_2$ are wreath products together with the length formula in Lemma \ref{t5}, we have that for any $x \in G_2,$
\begin{equation}\label{cx}
 |x|_{G_2} \leq |x|_G.
 \end{equation}

By induction, the finitely generated subgroup $H_2$ of $G_2$ has distortion function $F(l)$ equivalent to that of a  subgroup $\tilde{H}_2$ with $b$ in $\z^k \wre \z$ for some $k$. That is, $F(l)=\Delta_{H_2}^{G_2}(l) \approx \Delta_{\tilde{H_2}}^{\z^k \wre \z}(l)$. In particular, for any $x \in H_2$, 
\begin{equation}\label{ax}
|x|_{H_2} \leq F(|x|_{G_2}).
\end{equation}
Since $gP \in H_2$, we have that $$|gP|_{H_2} \leq F(|gP|_{G_2}) \leq F(|gP|_G) \leq F(l+cf(l)).$$ The first inequality follows from Equation (\ref{ax}), the second from Equation (\ref{cx}), and the last from Equation (\ref{xc}).

Because $H_2 \leq H$ there is a constant $k$ such that for any $x \in H_2, |x|_H \leq k |x|_{H_2}.$

Combining all previous estimates, we compute that $$|g|_H \le |gP|_H+|P|_H \le k|gP|_{H_2}+f(l)\le kF(l+cf(l))+f(l).$$ 
Thus, at this point we have shown that $\Delta_H^G(l) \preceq F(l) = \Delta_{H_2}^{G_2}(l),$ since $f$ is linear. On the other hand, $\Delta_{H_2}^G(l) = \Delta_H^G(l)$ by Lemma \ref{ab}. By Lemma \ref{wkf} we have that $\Delta_{H_2}^{G_2}(l) \preceq \Delta_{H_2}^{G}(l)$ and so $\Delta_H^G(l) \approx \Delta_{H_2}^{G_2}(l) \approx \Delta_{\tilde{H_2}}^{\z^k \wre \z}(l)$.

%If the subgroup $H$ had been abelian, it follows by induction that it is undistorted, because %the finitely generated group $H \cap U$ is also abelian, and so its distortion in $G_2$ is %linear.
\end{proof}

\begin{cor}\label{AwrZ} For any finitely generated abelian group $A$, the distortion of every subgroup $H$ with $b$ in $A \wre \z$ is polynomial. $H$ is undistorted if $A$ is finite.
\end{cor}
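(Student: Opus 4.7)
The plan is to obtain Corollary \ref{AwrZ} as an immediate concatenation of the two preceding results, Lemma \ref{grga} and Corollary \ref{withb}. Given a finitely generated abelian group $A$ and a subgroup $H$ with $b$ in $G = A \wre \z$, I would first invoke Lemma \ref{grga} to produce a subgroup $H'$ with $b$ in $\z^k \wre \z$, where $k$ is the torsion-free rank of $A$, such that $\Delta_H^G(l) \approx \Delta_{H'}^{\z^k \wre \z}(l)$. Then I would apply Corollary \ref{withb}, which says exactly that $\Delta_{H'}^{\z^k \wre \z}(l)$ is equivalent to a polynomial. This yields the first assertion.

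For the second assertion, when $A$ is finite its torsion-free rank is $k = 0$, so the reduction provided by Lemma \ref{grga} lands in $\z^0 \wre \z$, which is just the cyclic group $\z = \langle b \rangle$. A subgroup with $b$ in $\z$ is forced to equal all of $\z$, which is trivially undistorted by Lemma \ref{wkf}(5). Transporting this conclusion back through the equivalence of distortion functions furnished by Lemma \ref{grga} shows that $\Delta_H^G(l) \approx l$, so $H$ is undistorted in $G$.

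There is no genuine obstacle, since the substantive work has already been done earlier in the paper. Lemma \ref{grga} performs the reduction from a general abelian coefficient group $A$ to $\z^k$ via the module-theoretic machinery of Section \ref{mt} together with a Noetherian argument on $\z[\langle b \rangle]$-submodules of $W$, while Corollary \ref{withb} packages the polynomial distortion of tame subgroups (Corollary \ref{tame}, ultimately drawing on Theorem \ref{tgs} through Corollary \ref{exempl} and Lemma \ref{lcc}) together with the reduction of an arbitrary subgroup with $b$ to a tame one (Lemma \ref{addm}). The corollary is a one-line deduction that stitches these two equivalences together.
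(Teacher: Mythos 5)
Your proposal is correct and matches the paper's own proof, which likewise deduces the corollary directly from Lemma \ref{grga} and Corollary \ref{withb}; your elaboration of the finite case (torsion-free rank $k=0$, so the reduction lands in $\z^0\wre\z\cong\z$, where a subgroup with $b$ is the whole group and hence undistorted) is exactly the intended reading of that one-line deduction.
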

\proof This follows from Lemma \ref{grga} and Corollary \ref{withb}.

\section{Completion of the Proof of Theorem \ref{x}}

\begin{lemma}\label{r4}
Let $G$ be a group having normal subgroup $W$ and cyclic $G/W = \langle bW \rangle$. Then any finitely generated subgroup $H$ of $G$ may be generated by elements of the form $w_1b^t, w_2, \dots, w_s$ where $w_i \in W$.
\end{lemma}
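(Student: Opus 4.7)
The plan is to start from an arbitrary finite generating set $\{h_1,\dots,h_r\}$ of $H$ and write each generator in the form $h_i=u_ib^{t_i}$ with $u_i\in W$ and $t_i\in\z$. The tuple of exponents $(t_1,\dots,t_r)$ generates a cyclic subgroup $t\z\leq\z$; if all $t_i$ equal zero then $H\subset W$ and every generator already has the required form (with $t=0$ and no $w_1b^t$ factor needed), so we may assume not all $t_i$ vanish and set $t=\gcd\{t_i:t_i\neq0\}>0$.

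Next I would apply the Euclidean algorithm inside $H$: by Bezout there exist integers $\alpha_1,\dots,\alpha_r$ with $\sum \alpha_i t_i=t$, and multiplying the corresponding $h_i^{\pm1}$ together (in any fixed order) produces an element of $H$ whose image in the cyclic quotient $G/W=\langle bW\rangle$ equals $b^tW$. That element therefore has the form $w_1b^t$ for some $w_1\in W$, which is our first generator.

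Then for every original generator $h_i=u_ib^{t_i}$ with $t_i\neq 0$ (so $t\mid t_i$), I would form
\[
w_i':=h_i\cdot(w_1b^t)^{-t_i/t}\in H.
\]
Its image in $G/W$ is $b^{t_i}W\cdot(b^tW)^{-t_i/t}=W$, hence $w_i'\in W\cap H$. The set consisting of $w_1b^t$, the elements $w_i'$, and the original generators $h_j$ with $t_j=0$ still generates $H$, because each original $h_i$ is recoverable as $h_i=w_i'(w_1b^t)^{t_i/t}$ (or $h_i=u_i\in W$ if $t_i=0$). Relabeling the elements lying in $W$ as $w_2,\dots,w_s$ yields the desired generating set $\{w_1b^t,w_2,\dots,w_s\}$.

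There is no real obstacle here beyond bookkeeping; the only subtle point is verifying that the Bezout-type product used to produce $w_1b^t$ indeed sits inside $H$ (which it does, since it is a word in the $h_i^{\pm1}$) and that the resulting set still generates $H$ (which follows from the explicit inversion formula above).
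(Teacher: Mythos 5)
Your argument is correct, and it is exactly the elementary Bezout/gcd argument the paper has in mind: the authors omit the proof, remarking only that it "is elementary and follows from the assumption that $G/W$ is cyclic." Your bookkeeping (producing $w_1b^t\in H$ from a word with exponent sum $t=\gcd\{t_i\}$, then replacing each $h_i$ by $h_i(w_1b^t)^{-t_i/t}\in W\cap H$) fills in precisely that omitted routine verification, including the degenerate case $H\subset W$.
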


The proof is elementary and follows from the assumption that $G/W$ is cyclic.

\begin{rem}\label{z}
It follows that any finitely generated subgroup in $A \wre \z=W\lambda \langle b\rangle$ can be generated by elements \\$w_1b^t, w_2, \dots, w_s$ where $w_i \in W$.
\end{rem}

\begin{defn}
For a fixed finitely generated abelian group $A$ and any $t>0$, the group $L_t$ is the subgroup of $A \wre \z$ generated by the subgroup $W$ and by the element $b^t$. 
\end{defn}

%The following discussion will be used in later sections. 

\begin{lemma}\label{zz}
If $A$ is a fixed $r$ generated abelian group then $L_t \cong A^{t} \wre \z$,
where $A^t = A\bigoplus\dots\bigoplus A$ ($t$ times). 
\end{lemma}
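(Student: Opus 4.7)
The plan is to exhibit an explicit decomposition of the base group $W$ of $A\wre\z$ into $t$-blocks that are permuted regularly by conjugation by $b^t$, and then recognize this as the base group of $A^t\wre \z$ with active generator $b^t$.

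First I would note that since $W$ is normal in $A\wre\z$ it is normal in $L_t$, and $L_t/W$ is generated by the coset $b^tW$, which has infinite order because $b$ has infinite order modulo $W$. Consequently $L_t = W\lambda\langle b^t\rangle$ as a (internal) semidirect product, with $\langle b^t\rangle\cong\z$ acting on $W$ by conjugation.

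Next I would decompose $W$ according to residues modulo $t$. Recall $W=\bigoplus_{i\in\z}A_i$ with $A_i=b^iAb^{-i}$. For each $j\in\z$, set
\[
C_j \;=\; A_{jt}\oplus A_{jt+1}\oplus\cdots\oplus A_{jt+t-1}\;\cong\;A^t.
\]
Then $W=\bigoplus_{j\in\z}C_j$, and since conjugation by $b$ sends $A_i$ to $A_{i+1}$, conjugation by $b^t$ sends $A_i$ to $A_{i+t}$; hence it carries $C_j$ isomorphically onto $C_{j+1}$, shifting the block index by $1$.

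Now I would build the isomorphism. Let $A^t\wre\z = W'\lambda\langle c\rangle$ with $W'=\bigoplus_{j\in\z}(A^t)_j$ where $(A^t)_j=c^j(A^t)c^{-j}$. Define a map $\varphi: A^t\wre \z \to L_t$ by sending $c\mapsto b^t$ and, for each $j$, sending $(A^t)_j$ onto $C_j$ via the natural identification $A^t\cong C_j$ obtained from fixing identifications $A\cong A_{jt+k}$ for $k=0,\dots,t-1$. Because the action of $c$ on $W'$ (shift by $1$ on block indices) matches the action of $b^t$ on $W$ (shift by $1$ on the block indices $j$), $\varphi$ respects the wreath-product multiplication formula, so it is a homomorphism. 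It is surjective since its image contains $b^t$ and every $C_j$, hence all of $W$ and $b^t$. It is injective because the two base groups $W'=\bigoplus C_j'$ and $W=\bigoplus C_j$ are matched by isomorphisms on each summand and the semidirect structures agree. Thus $L_t\cong A^t\wre\z$.

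No serious obstacle arises; the only point to be careful about is the bookkeeping of the two indexings (the original $\z$-indexing of copies of $A$ inside $W$, and the new $\z$-indexing of copies of $A^t$ after grouping in blocks of size $t$), and the verification that conjugation by $b^t$ in $A\wre\z$ corresponds under the block identification precisely to the defining action of the active generator in $A^t\wre\z$.
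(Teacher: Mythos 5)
Your proof is correct and follows essentially the same route as the paper: the paper's one-line argument also groups the copies of $A$ into the blocks $A_1\oplus A_b\oplus\dots\oplus A_{b^{t-1}}$ (your $C_j$) and identifies $L_t=W\lambda\langle b^t\rangle$ with $A^t\wre\z$; you have merely spelled out the block-shifting verification in detail.
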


\proof The statement follows from Remark \ref{yyy} with \\$A^t = A_1\bigoplus A_b\bigoplus\dots\bigoplus A_{b^{t-1}}$.\endproof

\begin{lemma}\label{zzz}
For any $w \in W$ there is an automorphism $L_t \rightarrow L_t$ identical on $W$ such that $wb^t \rightarrow b^t$, provided $t \ne 0$.
\end{lemma}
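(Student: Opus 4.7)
The plan is to write down an explicit candidate for the automorphism and check the relations. By Lemma~\ref{zz}, the group $L_t$ is a semidirect product $W\rtimes\langle b^t\rangle$ in which $b^t$ has infinite order and every element has a unique normal form $v\cdot(b^t)^k$ with $v\in W$, $k\in\z$. Since I want $\phi$ to fix $W$ pointwise and send $wb^t$ to $b^t$, I am forced, via $\phi(wb^t)=\phi(w)\phi(b^t)=w\phi(b^t)$, to define
$$\phi(v)=v \text{ for every } v\in W, \qquad \phi(b^t)=w^{-1}b^t.$$

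First I would check that this assignment on generators extends to a homomorphism $L_t\to L_t$. The defining relations of the semidirect product are the relations of $W$ together with $b^t v b^{-t}=b^t\circ v$ for $v\in W$. The former are preserved because $\phi$ is the identity on $W$. For the latter, one computes
$$\phi(b^t)\phi(v)\phi(b^t)^{-1}=(w^{-1}b^t)v(b^{-t}w)=w^{-1}(b^t\circ v)w = b^t\circ v,$$
where in the last equality I use that $W$ is abelian (so $w^{-1}$ and $b^t\circ v$ commute). Since $b^t\circ v\in W$, this equals $\phi(b^t\circ v)$, so the relation is preserved and $\phi$ is a well-defined endomorphism.

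Next I would verify that $\phi$ is an automorphism. Surjectivity is immediate: the image contains $W$ and $w^{-1}b^t$, and hence also $w\cdot(w^{-1}b^t)=b^t$, so the image is all of $L_t$. For injectivity I would expand $(w^{-1}b^t)^k$ into its unique normal form $W_k\cdot b^{tk}$, where $W_k\in W$ is obtained as a product of $b^{tj}$-translates of $w^{-1}$; then $\phi(vb^{tk})=vW_k\cdot b^{tk}$, and because the normal form is unique, $vW_k=1$ and $b^{tk}=1$ force $k=0$ (using $t\ne 0$ so that $b^t$ has infinite order) and then $v=1$. Finally, the original requirement is satisfied: $\phi(wb^t)=w\cdot w^{-1}b^t=b^t$.

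The only nontrivial point is the homomorphism check, and it is routine once one notices that conjugation by $w^{-1}b^t$ and conjugation by $b^t$ act identically on the abelian subgroup $W$; everything else is bookkeeping with the normal form of the semidirect product.
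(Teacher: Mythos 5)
Your proposal is correct and follows essentially the same route as the paper: the whole point, stated in the paper's one-line proof, is that conjugation by $wb^t$ and by $b^t$ induce the same action on the abelian group $W$, which is exactly the identity $w^{-1}(b^t\circ v)w=b^t\circ v$ at the heart of your homomorphism check. Your additional verifications (extension via the semidirect-product relations, surjectivity, and injectivity from the uniqueness of the normal form $v\,b^{tk}$ with $t\ne 0$) just spell out the details the paper leaves implicit.
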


\proof This follows because the actions by conjugation of $b^t$ and $wb^t$ on $W$ coincide.
\endproof

\begin{lemma}\label{r5}
Let $H$ be a finitely generated subgroup of $A \wre \z$ not contained in $W$, where $A$ is finitely generated abelian. Then the distortion of $H$ in $A \wre \z$ is equivalent to the distortion of a subgroup $H'$ with $b$ in $A' \wre \z$ where 
%$b$ (the generator of $\z$) is contained in $H'$, and 
$A' \cong A^t$ is also finitely generated abelian.
\end{lemma}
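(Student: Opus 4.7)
The plan is to show that $H$ lies in a finite-index subgroup $L_t$ of $A \wre \z$, to identify $L_t$ with a wreath product $A^t \wre \z$, and finally to transform the generating set of $H$ into one that contains the active generator of this new wreath product. All three operations preserve distortion up to equivalence.

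First I would apply Remark \ref{z}: since $H$ is finitely generated and not contained in $W$, we may choose generators of the form $w_1 b^t, w_2, \dots, w_s$ with $w_i \in W$ and $t > 0$ (picking $t$ as the smallest positive exponent appearing in the image of $H$ under the projection $A \wre \z \to \z$ ensures $t$ is well-defined). Each of these generators lies in $L_t = W \lambda \langle b^t \rangle$, so $H \leq L_t$. Because $[A \wre \z : L_t] = t < \infty$, parts (1)--(3) of Lemma \ref{wkf} combine to yield $\Delta_H^{A \wre \z}(l) \approx \Delta_H^{L_t}(l)$.

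Next I would invoke Lemma \ref{zz} to identify $L_t$ with $A^t \wre \z$, the active cyclic factor being generated by $b^t$ and the passive group being $A^t = A \oplus b A b^{-1} \oplus \dots \oplus b^{t-1} A b^{-(t-1)}$. By Lemma \ref{zzz} there is an automorphism $\phi$ of $L_t$ that fixes $W$ pointwise and sends $w_1 b^t$ to $b^t$. Set $H' = \phi(H)$; then $H'$ is generated by $b^t, \phi(w_2), \dots, \phi(w_s) = b^t, w_2, \dots, w_s$, all but the first lying in $W$. Thus $H'$ is a subgroup with $b$ in $A' \wre \z$ for $A' = A^t$, in the sense of the paragraph preceding Lemma \ref{addm}.

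Finally, I would argue that $\Delta_{H'}^{L_t}(l) \approx \Delta_H^{L_t}(l)$: any group automorphism of a finitely generated group is bi-Lipschitz for the word metric, since both $\phi$ and $\phi^{-1}$ send each generator to a word of bounded length, and so both the ambient length function on $L_t$ and the intrinsic length function on $H$ are distorted by at most a multiplicative constant under $\phi$. Chaining the three equivalences gives $\Delta_H^{A \wre \z}(l) \approx \Delta_H^{L_t}(l) \approx \Delta_{H'}^{L_t}(l) = \Delta_{H'}^{A' \wre \z}(l)$, which is the statement of the lemma. The only genuinely non-cosmetic step is the verification that $\phi$ preserves distortion, and this is immediate from the general fact just cited; consequently no real obstacle arises, and the lemma is essentially an organizational device to move attention from $A \wre \z$ to the setting of Corollary \ref{AwrZ}.
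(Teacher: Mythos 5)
Your proposal is correct and follows essentially the same route as the paper: generators of the form $w_1b^t, w_2,\dots,w_s$ via Lemma \ref{r4}/Remark \ref{z}, passage to the finite-index subgroup $L_t$ with Lemma \ref{wkf}, the identification $L_t\cong A^t\wre\z$ from Lemma \ref{zz}, and the automorphism of Lemma \ref{zzz} sending $w_1b^t$ to $b^t$ to produce a subgroup with $b$. Your explicit bi-Lipschitz justification that the automorphism preserves distortion is a point the paper leaves implicit, but it is the same argument.
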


\begin{proof}
By Lemma \ref{r4} the generators of $H$ may be chosen to have the form $w_0b^t, w_1, \dots, w_s$ where $w_i \in W$. Therefore, for this value of $t$ we have that $H$ is a subgroup of $L_t$. Using the isomorphisms of Lemmas \ref{zz} and \ref{zzz} we have that $H$ is a subgroup of $A^{t} \wre \z=A' \wre \z$ generated by the image of $b^tw_0, w_1, \dots, w_s$ under the two isomorphisms: elements $b, x_1, \dots, x_s$. Finally, because $[A \wre \z: L_t ]<\infty$ we have by Lemma \ref{wkf} that the distortion of $H$ in $A \wre \z$ is equivalent to the distortion of its image in $A^{t} \wre \z$. 
\end{proof}

%\begin{defn}\label{withb}
%Let $H$ be a subgroup of $A \wre \z = W \lambda \langle b \rangle$ where $A$ is finitely %generated abelian. We call $H$ ``a subgroup with b'' if the generators of $H$ may be given by %$b, w_1, \dots, w_s$ for $w_i \in W$.
%\end{defn}

%\section{Distortion in $\z^k \wre \z$} 

%The main results of this paper deal with distortion in finitely generated subgroups of wreath %products of the form $A \wre \z$, where $A$ is finitely generated abelian. In the case where %$A$ is free abelian, we may reduce computations to certain subgroups that are easier to %understand. 

 %\section
 {\bf Proof of Theorem \ref{x}}\label{final}

Theorem \ref{x} Parts $(1)$ and $(2)$ follow from Lemma \ref{abel} if the subgroup $H$ is abelian. Otherwise they follow from Corollary \ref{AwrZ} and Lemma \ref{r5}.
%\ref{jstn}, \ref{hpu} and \ref{tgs}.

%It follows by Lemma \ref{grga} that all finitely generated subgroups in $A \wre \z$ where $A$ %is finite abelian are undistorted. For in this case, $k=0$ and so $F(l)$ is linear. Therefore, %Theorem \ref{x} Part $(2)$ is also proved.

Now we complete the proof of Theorem \ref{x}, Part $(3)$. Let $A$ be a finitely generated abelian group of rank $k$. Consider the $2$-generated subgroup $H \leq \z \wre \z$ constructed as follows. Let $m \in \mathbb{N}$. Consider $h(x)=(1-x)^{m-1}$. Then the distortion of the polynomial $h$ is seen to be equivalent to $l^m$, by Lemma \ref{tgs}. By Lemma \ref{hpu}, this means that the $2$-generated subgroup $\langle b,(1-x)^{m-1}a \rangle = H_m \leq \z \wre \z$ has distortion $\Delta_{H_m}^{\z \wre \z}(l) \approx l^m$. The subgroup $\z \wre \z$ is a retract of $A \wre \z$ if $A$ is infinite. Therefore, the distortion of $H_m$ in $\z \wre \z$ and in $A \wre \z$ are equivalent by Lemma \ref{wkf}.

%By Lemma \ref{grga}, we have that the distortion of the subgroup $H$ in $A \wre \z$ is at most %equivalent to its distortion in $\z^k \wre \z$. The required lower bound on distortion follows %from the fact that $\z^k \wre \z$ is a subgroup of $A \wre \z$.

\begin{rem}
If we adopt the notation that the commutator $[a,b]=aba^{-1}b^{-1}$, then we see that in $\z \wre \z$, the element of $W$ corresponding to the polynomial $(1-x)^{m-1}a$ is $[ \cdots [a,b],b], \cdots, b]$ where the commutator is $(m-1)$-fold. This explains Corollary \ref{xx}.
\end{rem}

\medskip

\noindent {\bf Acknowledgement.}
The authors are grateful to Nikolay Romanovskiy for his many valuable comments.

 \addtolength{\textwidth}{.7in}
\addtolength{\evensidemargin}{-0.35in}
\addtolength{\oddsidemargin}{-0.35in}
\addtolength{\textheight}{.5in}
\addtolength{\topmargin}{-.25in}

\end{document}